\newtheorem{thm}{Theorem}[section]
\newtheorem{lem}[thm]{Lemma}
\newtheorem{prop}[thm]{Proposition}
\theoremstyle{definition}
\newtheorem{defn}{Definition}[section]
\theoremstyle{remark}
\newtheorem{rem}{\bf Remark}[section]
\numberwithin{equation}{section}
\def\supp{{\rm{\,supp\,}}}
\begin{document}
\begin{sloppypar}
\title[{Pseudo-differential operators}]
{\uppercase{Sharp maximal function estimates and $H^{p}$ continuities of pseudo-differential operators}}

\author{Guangqing Wang}
%\author{Wenyi Chen$^{1,2}$}
%\author{Jie Yang$^{2}$}

\address{ School of Mathematics and Statistics, Fuyang Normal University, Fuyang, Anhui 236041, P.R.China}
%\address{$1.$ School of Mathematics and Statistics, Wuhan University, Wuhan University, Wuhan, Hubei 430072, P.R.China}
%\address{$2.$  College of Mathematics and System Science, Xinjiang University, Xinjiang 830046, P.R.China}
\thanks{The research of author is supported in part by Scientific Research Foundation of Education Department of Anhui Province of China (2022AH051320,KJ2021A0659), Doctoral Scientific Research Initiation Project of Fuyang Normal University (2021KYQD0001) and University Excellent Young Talents Research Project of Anhui Province (gxyq2022039).}
\email{wanggqmath@whu.edu.cn}
%\email{wychencn@whu.edu.cn (W.Chen)}
%\email{yangjie1106@xju.edu.cn (J.Yang)}

\maketitle

\begin{abstract} It is studied that pointwise estimates and continuities on Hardy spaces of pseudo-differential operators (PDOs for short) with the symbol in general H\"{o}rmander's classes. We get weighted weak-type $(1,1)$ estimate, weighted normal inequalities, $(H^{p},H^{p})$ continuities and $(H^{p},L^{p})$ continuities for PDOs, where $0<p\leq1$.
\end{abstract}
%and our techniques are fairly general

{\bf MSC (2010). } Primary 42B20, Secondary 42B37.

{{\bf Keywords}:  Pseudo-differential operators, Hardy spaces, Pointwise estimates}

\section{Introduction and main results}
Let $m\in \mathbb{R}$, $0\leq\varrho,\delta\leq1$.  A symbol $a(x,\xi)$ is said to be in the H\"{o}rmander class $S^{m}_{\varrho,\delta}$ \cite{Hormander2}, if $a(x,\xi)\in C^{\infty}(\mathbb{R}^{n}\times\mathbb{R}^{n})$ with
\begin{eqnarray*}
|\partial^{\beta}_x\partial^{\alpha}_{\xi}a(x,\xi)|\leq C_{\alpha,\beta}\langle\xi\rangle^{m-\varrho |\alpha|+\delta|\beta|},
\end{eqnarray*}
for any multi-indices $\alpha,\beta.$
The pseudo-differential operators with symbol $a(x,\xi)$ is defined by the formula
\begin{eqnarray}\label{df}
T_{a}u(x)
&=&\frac{1}{(2\pi)^n}\int_{\mathbb{R}^n} e^{ i \langle x,\xi\rangle}a(x,\xi)  \hat{u}(\xi)d\xi,
\end{eqnarray}
where $\hat{u}$ denotes the fourier transform of $u$. An important topic on the pseudo-differential operators is to study the properties of these operators acting on some function spaces and some pointwise estimates for them. $L^{p}$ regularity is a fundamental one which can be gotten by the complex interpolation between $L^{2}$-continuity and ($L^{\infty},BMO$)-continuity, see \cite{Fefferman,Stein,Ragusa3}. As we know, $L^{2}$-continuity of the pseudo-differential operators is sharp in terms of its order $m\leq-\frac{n}{2}\max\{\delta-\varrho,0\}$, where $0\leq\varrho\leq1$ and $0\leq\delta<1$, see \cite{Hounie2,Hormander3}. However, it is not clear if the ($L^{\infty},BMO$)-continuity is sharp when $0\leq\varrho<\delta<1$, see \cite{Kenig,Miyachi}. On the one hand, if $a(x,\xi)\in L^{\infty}S^{m}_{\varrho}$ with $m<-\frac{n}{2}(1-\varrho)$, the pseudo-differential operators are bounded on $L^{\infty}(\mathbb{R}^n)$ \cite{Kenig}, which implies the ($L^{\infty},BMO$)-continuity. Here $L^{\infty}S^{m}_{\varrho}$ denotes the rough H\"{o}rmander class whose constituent $a(x,\xi)$ obeys
 \begin{equation*}
   \|\partial^{\alpha}_{\xi}a(\cdot,\xi)\|_{L^{\infty}(\mathbb{R}^n)}\leq C_{\alpha}\langle\xi\rangle^{m-\varrho |\alpha|}.
\end{equation*}
Clearly, the relation $S^{m}_{\varrho,\delta}\subset L^{\infty}S^{m}_{\varrho}$ holds for any $m\in \mathbb{R}$, $1\leq\varrho,\delta\leq1$. On the other hand, there is a symbol a $a\in S^{m}_{\varrho,0}$ such that $T_{a}$ dose not map $L^{\infty}$ to ${\rm BMO}$ if $m>-\frac{n}{2}(1-\varrho)$, see \cite{Miyachi}.

Recently, taking full advantage of the smooth of variate $x$, the author \cite{W} prove that if $0\leq\varrho\leq1$, $0\leq\delta<1$ and $a(x,\xi)\in S^{-\frac{n}{2}(1-\varrho)}_{\varrho,\delta}$, the $(L^{\infty},BMO)$-continuity of the pseudo-differential operators $T_{a}$ is true, and clearly it is sharp. Moreover, the $L^{p}$ boundedness is studied as well.
\begin{thm}[Wang \cite{W}]\label{Tz}
Let $1<p<\infty$, $0\leq\varrho\leq1,$ $0\leq\delta<1$ and $a(x,\xi)\in S^{m}_{\varrho,\delta}$.
If $$m\leq -n(1-\varrho)|\frac{1}{2}-\frac{1}{p}|-n\frac{\max\{\delta-\varrho,0\}}{\max\{p,2\}},$$
then
$$\|T_{a}u\|_{L^{p}}\lesssim\|u\|_{L^{p}}.$$
\end{thm}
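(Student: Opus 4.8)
The plan is to deduce the estimate by complex interpolation, feeding two known endpoint continuities through the sharp $L^{2}$ bound, which sits at the vertex of the admissible region of orders. Put $\mu=\tfrac n2(1-\varrho)$ and $\nu=\tfrac n2\max\{\delta-\varrho,0\}$; since $\delta<1$ we have $0\le\nu\le\mu$, and we set $\gamma=\mu-\nu\ge0$. Because $S^{m}_{\varrho,\delta}\subset S^{m'}_{\varrho,\delta}$ whenever $m\le m'$, it suffices to treat the borderline order $m=m(p)$, the right-hand side of the stated inequality; one checks $m(p)=-\nu-\tau\gamma$ with $\tau=1-\tfrac2p$ when $p\ge2$, and $m(p)=-\nu-\tau\mu$ with $\tau=\tfrac2p-1$ when $p\le2$. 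We will use three endpoint facts: \emph{(I)} every $T_{b}$ with $b\in S^{-\nu}_{\varrho,\delta}$ is bounded on $L^{2}$, with norm controlled by finitely many symbol seminorms of $b$ (the sharp $L^{2}$ theory recalled above); \emph{(II)} every $T_{b}$ with $b\in S^{-\mu}_{\varrho,\delta}$ maps $L^{\infty}$ into ${\rm BMO}$, again with seminorm control (the continuity of \cite{W} recalled above); \emph{(III)} every $T_{b}$ with $b\in S^{-\mu-\nu}_{\varrho,\delta}$ maps $H^{1}$ into $L^{1}$, with seminorm control. When $\delta<\varrho$ one has $\nu=0$ and \emph{(III)} is simply the dual of \emph{(II)}, since the adjoint of a symbol in $S^{-\mu}_{\varrho,\delta}$ again lies in $S^{-\mu}_{\varrho,\delta}$; the case $\delta\ge\varrho$ is dealt with at the end.

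Fix $a\in S^{m(p)}_{\varrho,\delta}$, and in the strip $\{0\le\mathrm{Re}\,z\le1\}$ form the analytic family of pseudo-differential operators $T_{z}:=T_{a_{z}}$, where $a_{z}(x,\xi)=\langle\xi\rangle^{(\tau-z)\kappa}\,a(x,\xi)$ and $\kappa=\gamma$ if $p\ge2$, $\kappa=\mu$ if $p\le2$, with $\tau$ as above. Then $a_{\tau}=a$. If $2\le p<\infty$, then $a_{z}\in S^{-\nu}_{\varrho,\delta}$ on $\mathrm{Re}\,z=0$ and $a_{z}\in S^{-\mu}_{\varrho,\delta}$ on $\mathrm{Re}\,z=1$, the relevant seminorms of $a_{z}$ growing at most polynomially in $|\mathrm{Im}\,z|$ (the only extra factor is $\langle\xi\rangle^{-i\kappa\,\mathrm{Im}\,z}$); by \emph{(I)}, \emph{(II)} the family has admissible bounds $L^{2}\to L^{2}$ and $L^{\infty}\to{\rm BMO}$ on the two boundary lines, so Stein's complex interpolation theorem together with $[L^{2},L^{\infty}]_{\tau}=[L^{2},{\rm BMO}]_{\tau}=L^{p}$ gives $T_{a}=T_{\tau}$ bounded on $L^{p}$ (for $p=2$ this is \emph{(I)}, as $m(2)=-\nu$). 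If $1<p<2$, then $a_{z}\in S^{-\nu}_{\varrho,\delta}$ on $\mathrm{Re}\,z=0$ and $a_{z}\in S^{-\mu-\nu}_{\varrho,\delta}$ on $\mathrm{Re}\,z=1$, and by \emph{(I)}, \emph{(III)} and Stein's theorem together with $[L^{2},H^{1}]_{\tau}=[L^{2},L^{1}]_{\tau}=L^{p}$ we again obtain $T_{a}$ bounded on $L^{p}$.

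The one remaining ingredient, and the step I expect to be the main obstacle, is \emph{(III)} when $\delta\ge\varrho$. The duality shortcut is unavailable here because for $\delta\ge\varrho$ the adjoint $T_{a}^{*}$ is no longer a pseudo-differential operator with symbol in $S^{-\mu}_{\varrho,\delta}$ — the asymptotic expansion of its symbol has terms of non-decreasing order — so the $H^{1}\to L^{1}$ bound must be established directly, and at the strictly worse order $-\mu-\nu$. I would attack it by an atomic (or molecular) decomposition of $H^{1}$: given an atom $b$ supported on a ball $B(x_{0},r)$, dyadically split $a=\sum_{j\ge0}a_{j}$ with $a_{j}$ supported in $|\xi|\sim2^{j}$, estimate $\|T_{a_{j}}b\|_{L^{1}}$ scale by scale — combining the $L^{2}$ bound for $T_{a_{j}}$, whose norm is $\lesssim2^{-j\mu}$ because the $2^{j\nu}$ loss from $x$-differentiation of the symbol (present when $\delta>\varrho$) is exactly what the extra $-\nu$ in the order absorbs, with the cancellation $\int b=0$ and the off-diagonal decay and regularity of the kernels of $T_{a_{j}}$ — and then sum over $j$, splitting at $2^{j\varrho}r\sim1$. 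Arranging this balancing so that the sum is bounded uniformly in $r$ is precisely what forces the borderline order and is where the hypothesis $\delta<1$ (equivalently $\mu>\nu$) enters; this is the technical heart of the argument.
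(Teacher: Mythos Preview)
This theorem is not proved in the present paper: it is quoted from \cite{W}, and the paper explicitly remarks that its novelty over \cite[Theorem~3.4]{Hounie} lies only in the range $2<p<\infty$, where \cite{W} exploits the sharp $(L^{\infty},{\rm BMO})$ bound for $T_{a}$ at order $-\mu=-\tfrac{n}{2}(1-\varrho)$. Your interpolation for $p\ge 2$ between endpoints \emph{(I)} and \emph{(II)} is exactly this argument, so that half of your plan matches the intended proof.

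Your concern about endpoint \emph{(III)} is, however, misplaced. For $1<p<2$ the order $m(p)=-\nu-\tau\mu$ coincides with the order already in \cite[Theorem~3.4]{Hounie}, so this half of the statement is not new. Concretely, the $(H^{1},L^{1})$ bound for $T_{a}$ at order $-\mu-\nu$ (your \emph{(III)}) is precisely the case $p=1$ of the paper's Theorem~\ref{B3}, due to \'{A}lvarez--Hounie for $0<\varrho\le 1$; it is obtained there directly from kernel estimates and the atomic decomposition, \emph{not} by passing to the adjoint, so the breakdown of the adjoint calculus when $\delta\ge\varrho$ is simply irrelevant. The ``main obstacle'' you anticipate therefore does not arise. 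Your atomic sketch for \emph{(III)} is nonetheless correct in spirit and is essentially what Section~3 of this paper carries out to push the $(H^{1},L^{1})$ bound down to $\varrho=0$ (Theorem~\ref{TH1}(2)); the paper's companion $L^{p}$ result for $T^{*}_{a}$ is then derived by exactly your interpolation scheme, with the new $(H^{1},L^{1})$ endpoint for $T^{*}_{a}$ (Theorem~\ref{TH1}(1)) playing the role of your \emph{(III)}.
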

Clearly, the range of $m$ in \cite[Theorem 3.4]{Hounie} is revised when $2<p<\infty$ and $0\leq\varrho<\delta<1$. For the case $1<p<\infty$ and $0\leq\delta\leq\varrho<1$, we refer to \cite{Hormander3,Wang,Stein}.

It is a pity that the main idea is inapplicable to its dual operators $T^{*}_{a}$ which is defined by the formula
\begin{eqnarray}\label{df*}
T^{*}_{a}u(x)
&=&\frac{1}{(2\pi)^n}\int_{\mathbb{R}^n}\int_{\mathbb{R}^n} e^{ i \langle x-y,\xi\rangle}a(y,\xi)d\xi u(y) dy.
\end{eqnarray}
\allowdisplaybreaks
So the $(L^{\infty},BMO)$-continuity of $T^{*}_{a}$ has been understood so far\cite{Hounie} only if $a(y,\xi)\in S^{-\frac{n}{2}(1-\varrho)-\frac{n}{2}\max\{\delta-\varrho,0\}}_{\varrho,\delta}$. However, one can get $(H^{1},L^{1})$-continuity of $T^{*}_{a}$ under the condition $a(y,\xi)\in S^{-\frac{n}{2}(1-\varrho)}_{\varrho,\delta}$ (see Theorem \ref{TH1}). By complex interpolation, we have
\begin{thm}\label{Tz}
Let $1<p<\infty$, $0\leq\varrho\leq1,$ $0\leq\delta<1$ and $a(x,\xi)\in S^{m}_{\varrho,\delta}$.
If $$m\leq -n(1-\varrho)|\frac{1}{2}-\frac{1}{p}|-n\max\{\delta-\varrho,0\}(1-\frac{1}{\min\{p,2\}}),$$
then
$$\|T^{*}_{a}u\|_{L^{p}}\lesssim\|u\|_{L^{p}}.$$
\end{thm}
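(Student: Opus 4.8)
The plan is to build the $L^p$ bound for $T^*_a$ by complex interpolation from two borderline endpoint estimates, treating $1<p\le2$ and $2\le p<\infty$ separately. Since $S^{m'}_{\varrho,\delta}\subset S^{m}_{\varrho,\delta}$ whenever $m'\le m$, it suffices to prove $\|T^*_au\|_{L^p}\lesssim\|u\|_{L^p}$ in the equality case $m=m(p)$, where
\[
m(p):=-n(1-\varrho)\Big|\tfrac12-\tfrac1p\Big|-n\max\{\delta-\varrho,0\}\Big(1-\tfrac1{\min\{p,2\}}\Big);
\]
a symbol of strictly smaller order is then regarded as one of order $m(p)$.

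For $1<p\le2$ the two endpoints are the $(H^1,L^1)$-continuity of $T^*_a$ for $a\in S^{-\frac n2(1-\varrho)}_{\varrho,\delta}$ (Theorem~\ref{TH1}) and the sharp $L^2$-continuity of $T^*_a$ for $a\in S^{-\frac n2\max\{\delta-\varrho,0\}}_{\varrho,\delta}$, the latter being the dual of the classical sharp $L^2$ bound for $T_a$ (see \cite{Hounie2,Hormander3}). Fix $p\in(1,2)$, put $\theta=2-\tfrac2p\in(0,1)$, so that $[H^1,L^2]_\theta=L^p=[L^1,L^2]_\theta$, and on the strip $\{0\le\mathrm{Re}\,z\le1\}$ introduce the analytic family
\[
a_z(x,\xi)=\langle\xi\rangle^{\gamma(z)}a(x,\xi),\qquad
\gamma(z)=(1-z)\Big(-\tfrac n2(1-\varrho)-m\Big)+z\Big(-\tfrac n2\max\{\delta-\varrho,0\}-m\Big).
\]
Then $\gamma$ is affine, $m+\mathrm{Re}\,\gamma(z)$ equals $-\tfrac n2(1-\varrho)$ on $\mathrm{Re}\,z=0$ and $-\tfrac n2\max\{\delta-\varrho,0\}$ on $\mathrm{Re}\,z=1$, and the choice $m=m(p)$ forces $\gamma(\theta)=0$, hence $a_\theta=a$ and $T^*_{a_\theta}=T^*_a$. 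By the Leibniz rule and the elementary estimate $|\partial^\alpha_\xi\langle\xi\rangle^{\gamma(z)}|\lesssim(1+|\mathrm{Im}\,z|)^{|\alpha|}\langle\xi\rangle^{\mathrm{Re}\,\gamma(z)-|\alpha|}$, each $a_z$ lies in the corresponding H\"ormander class with seminorms growing at most polynomially in $|\mathrm{Im}\,z|$; as the bounds in Theorem~\ref{TH1} and in the $L^2$ estimate involve only finitely many of these seminorms, $\|T^*_{a_{it}}\|_{H^1\to L^1}$ and $\|T^*_{a_{1+it}}\|_{L^2\to L^2}$ are $O\big((1+|t|)^N\big)$. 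Stein's interpolation theorem for analytic families of operators then yields $\|T^*_a\|_{L^p\to L^p}<\infty$, the threshold being $(1-\theta)\big(-\tfrac n2(1-\varrho)\big)+\theta\big(-\tfrac n2\max\{\delta-\varrho,0\}\big)$, which a short computation identifies with $m(p)$; the case $p=2$ is the second endpoint itself.

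For $2\le p<\infty$ the quickest route is duality: since $T^*_a$ is the dual operator of $T_a$, one has $\|T^*_a\|_{L^p\to L^p}=\|T_a\|_{L^{p'}\to L^{p'}}$ with $1<p'\le2$, and because $|\tfrac12-\tfrac1{p'}|=|\tfrac12-\tfrac1p|$ and $\tfrac1{\max\{p',2\}}=1-\tfrac1{\min\{p,2\}}$ (true for every $p\in(1,\infty)$), the hypothesis of the $L^{p'}$-boundedness of $T_a$ proved in \cite{W} is precisely $m\le m(p)$. One may instead interpolate the $L^2$ estimate against the $(L^\infty,\mathrm{BMO})$-continuity of $T^*_a$ on $S^{-\frac n2(1-\varrho)-\frac n2\max\{\delta-\varrho,0\}}_{\varrho,\delta}$ from \cite{Hounie}, which produces the same threshold when $2<p<\infty$. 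Incidentally this duality argument already settles the entire range $1<p<\infty$; the interpolation scheme of the previous paragraph is recorded because it is the one that transfers, via the sharp maximal function, to the $(H^p,H^p)$ and $(H^p,L^p)$ estimates for $0<p\le1$.

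The only genuine obstacle lies behind Theorem~\ref{TH1}: once the sharp $(H^1,L^1)$-continuity of $T^*_a$ at order $-\tfrac n2(1-\varrho)$ is available, the present statement is a routine matter of duality and analytic interpolation. The care required in the interpolation route is to check the admissibility hypotheses of Stein's theorem — analyticity of $z\mapsto\langle T^*_{a_z}f,g\rangle$ in the open strip, and continuity with admissible growth on its closure for $f,g$ in a dense class — which in turn reduces to the two facts used above: $a_z\in S^{m+\mathrm{Re}\,\gamma(z)}_{\varrho,\delta}$ with seminorms $\lesssim(1+|\mathrm{Im}\,z|)^N$, and the quantitative dependence of the endpoint bounds on finitely many symbol seminorms.
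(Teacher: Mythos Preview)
Your proposal is correct and follows essentially the same approach as the paper, which simply states that the result follows ``by complex interpolation'' from the $(H^1,L^1)$-continuity of $T^*_a$ at order $-\tfrac{n}{2}(1-\varrho)$ (Theorem~\ref{TH1}) together with the known $L^2$ and $(L^\infty,\mathrm{BMO})$ endpoints. You have supplied the details the paper omits---the analytic family $a_z$, the polynomial growth of the seminorms, and the verification that the interpolated threshold equals $m(p)$---and your additional observation that duality with the $L^{p'}$-bound for $T_a$ from \cite{W} already covers the full range $1<p<\infty$ is a valid shortcut not made explicit in the paper.
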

In this paper, the properties of pseudo-differential operator acting on Hardy spaces $H^{p}(\mathbb{R}^{n})$ that is a right replacement for $L^{p}(\mathbb{R}^{n})$ when $0<p\leq1$, and some pointwise estimates for these operators are investigated.
Clearly, the $L^{p}(p\neq2)$ continuity between $T_{a}$ and $T^{*}_{a}$ is different in terms of the order $m$. Based on this observation, both $T_{a}$ and $T^{*}_{a}$ will be considered in this paper.

For the sake of narration, it is necessary to introduce some notations firstly. For a function $u\in L^{1}_{loc}(\mathbb{R}^{n})$, we define the Fefferman-Stein sharp maximal
function and Hardy-Littlewood maximal function by the formula:
$$M^{\sharp}u(x)=\sup\limits_{x\in Q}\inf\limits_{c}\frac{1}{|Q|}\int_{Q}|u(y)-c|dy\quad {\rm and}\quad Mu(x)=\sup\limits_{x\in Q}\frac{1}{|Q|}\int_{Q}|u(y)|dy$$
respectively, where $c$ moves over all complex number, and $Q$ containing $x$ moves over all cubes with its sides parallel to the coordinate axes. For $\epsilon>0$, denote $M^{\sharp}_{\epsilon}u=\big(M^{\sharp}(|u^{\epsilon}|)\big)^{1/\epsilon}$ and $M_{\epsilon}u=\big(M(|u^{\epsilon}|)\big)^{1/\epsilon}$.

The pointwise estimate of pseudo-differential operators in terms of $M^{\sharp}$ and $M$ are given by many authors. For example, Chanillo and Torchinsky \cite{Chanillo}, Journ\'{e} \cite{Journe}, Miller \cite{Miller}, Miyachi \cite{Miyachi}, Wang and Chen \cite{CW}, Park and Tomita \cite{Park} and Wang \cite{W} and so on. We refer to \cite{Beltran,Bagchi1,Bagchi2} for the pointwise sparse bounds of these operators. Here, one is apt to state a result by Miyachi and Yabuta \cite{MiyachiY}.
\begin{thm}[Miyachi and Yabuta \cite{MiyachiY}]\label{MY}
Let $1<p\leq2$, $0<\varrho\leq\frac{p}{2}$ and $\varrho<1$. If $a\in S^{-n(1-\varrho)/p}_{\varrho,\varrho}$, then
$$M^{\sharp}(T_{a}f)(x)\lesssim M_{p}f(x).$$
\end{thm}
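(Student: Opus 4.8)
The plan is to fix a cube $Q$ centered at $x_0$ with side length $\ell$, and to estimate the mean oscillation $\inf_c \frac{1}{|Q|}\int_Q |T_a f(y)-c|\,dy$ by a standard Calder\'on--Zygmund-type decomposition of the symbol relative to the scale $\ell$. First I would split $f = f_1 + f_2$ where $f_1 = f\chi_{2Q}$ is the local part and $f_2 = f\chi_{(2Q)^c}$ is the far part, and correspondingly write $T_a f = T_a f_1 + T_a f_2$. Choosing the constant $c = T_a f_2(x_0)$, one has to control two contributions: the average of $|T_a f_1|$ over $Q$, and the average of $|T_a f_2(y) - T_a f_2(x_0)|$ over $Q$.

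For the local term, the hypothesis $a\in S^{-n(1-\varrho)/p}_{\varrho,\varrho}$ with $\varrho \le p/2$ is exactly the borderline order for which $T_a$ maps $L^p \to L^p$ (this follows from Theorem \ref{Tz}, since for $1<p\le 2$ and $\varrho<1$ one checks $-n(1-\varrho)/p \le -n(1-\varrho)(\frac{1}{p}-\frac12) - n\max\{0\}\cdot(\cdots) $ fails at equality unless $\varrho\le p/2$; more precisely the threshold $-n(1-\varrho)|\frac12-\frac1p|$ coincides with $-n(1-\varrho)/p$ only when... ) — in any case one invokes the known $L^p$-boundedness of operators with symbols in $S^{-n(1-\varrho)/p}_{\varrho,\varrho}$ for $1<p\le 2$, $\varrho\le p/2$. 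Then by H\"older's inequality,
$$
\frac{1}{|Q|}\int_Q |T_a f_1(y)|\,dy \le \Big(\frac{1}{|Q|}\int_Q |T_a f_1(y)|^p\,dy\Big)^{1/p} \lesssim |Q|^{-1/p}\|f_1\|_{L^p} = \Big(\frac{1}{|2Q|}\int_{2Q}|f|^p\Big)^{1/p} \lesssim M_p f(x_0)
$$
for any $x_0 \in Q$, which is the desired bound.

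For the far term one must show $|T_a f_2(y) - T_a f_2(x_0)| \lesssim M_p f(x_0)$ for $y\in Q$, and this is where the kernel estimates carry the argument. I would decompose the symbol a` la Littlewood--Paley, $a = \sum_{j\ge 0} a_j$ with $a_j$ supported in $\langle\xi\rangle \sim 2^j$, so that the kernel $K_j(x,z)$ of $T_{a_j}$ satisfies, by the usual non-stationary phase / integration-by-parts estimates for symbols of type $(\varrho,\varrho)$, the bounds $|K_j(x,z)| \lesssim 2^{j(n - n(1-\varrho)/p)} 2^{-j\varrho N}|z|^{-N}$ on the region $|z| \gtrsim 2^{-j\varrho}$ together with complementary bounds for $|z|\lesssim 2^{-j\varrho}$, and similarly for $\nabla_z K_j$ and $\nabla_x K_j$ one gains a factor $2^j$. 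Summing in $j$ and integrating against $f_2$, split the $z$-region $|z|\gtrsim \ell$ dyadically into annuli $|z|\sim 2^k\ell$; on each annulus the difference $K(y,y-z)-K(x_0,x_0-z)$ gains a factor $\min(1, \ell/2^k\ell) = 2^{-k}$ from the mean value theorem (using the $x$- and $z$-regularity of $K$), while $\int_{|z|\sim 2^k\ell}|f_2(y-z)|\,dz \lesssim (2^k\ell)^{n/p'}\|f\|_{L^p(\text{annulus})} \lesssim (2^k\ell)^n M_p f(x_0)$. The decay of the kernel in $|z|$ at the critical order $n(1-\varrho)/p$, combined with $\varrho\le p/2$, is precisely what makes the geometric series in $k$ (and the sum over $j$, after balancing the cases $2^{-j\varrho} \lessgtr \ell$) converge. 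The main obstacle is this last bookkeeping: controlling the double sum over the Littlewood--Paley index $j$ and the annulus index $k$ uniformly, keeping track of the two regimes $2^{-j\varrho} \le \ell$ and $2^{-j\varrho} > \ell$, and verifying that the borderline exponent $-n(1-\varrho)/p$ together with the constraint $\varrho\le p/2$ leaves exactly enough room for summability — this is the quantitative heart of the estimate, everything else being the routine CZ machinery.
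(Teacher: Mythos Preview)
The statement you are addressing (Theorem~\ref{MY}) is a result of Miyachi--Yabuta which the paper \emph{cites} rather than proves; the paper's own argument is for the generalization Theorem~\ref{TH2}, so that is the natural point of comparison.

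Your approach is genuinely different from the paper's. You split $f=f_1+f_2$ (local/far) at the outset, dispose of $T_af_1$ via global $L^p$-boundedness plus H\"older, and attack $T_af_2$ through pointwise kernel-difference estimates after a Littlewood--Paley decomposition. The paper never splits $f$ at the top level: it bounds the oscillation by $\frac{1}{|Q|^2}\int_Q\int_Q|T_au(x)-T_au(z)|\,dz\,dx$, passes to the Littlewood--Paley pieces $T_j$ \emph{first}, and then treats each $j$ according to whether $2^j\ell<1$ (kernel difference, Lemma~\ref{L8}) or $2^j\ell\ge 1$ (direct control of $\frac{1}{|Q|}\int_Q|T_ju|$ via an $(L^p,L^2)$ estimate for the single piece $T_j$ combined with a local/far split of $u$ \emph{inside} each lemma, Lemmas~\ref{L9}--\ref{L70}). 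The paper's ordering --- frequency decomposition first, with piecewise $(L^p,L^2)$ rather than global $(L^p,L^p)$ estimates --- is exactly what allows it to drop the Miyachi--Yabuta restriction $\varrho\le p/2$ and reach the full range $0\le\varrho\le 1$, $0\le\delta<1$; your scheme should recover the restricted theorem but would not directly yield the generalization, and the ``bookkeeping'' you defer is indeed where the constraint $\varrho\le p/2$ enters in the original argument.

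One correction to your reasoning: $L^p$-boundedness of $T_a$ for $a\in S^{-n(1-\varrho)/p}_{\varrho,\varrho}$ does \emph{not} require $\varrho\le p/2$. For $1<p\le 2$ and $\delta=\varrho$ the $L^p$ threshold is $m\le -n(1-\varrho)(\tfrac1p-\tfrac12)$, and $-n(1-\varrho)/p$ lies strictly below this for every $0\le\varrho<1$. The restriction $\varrho\le p/2$ in Miyachi--Yabuta arises in the kernel analysis for the far part, not in the local $L^p$ step; your parenthetical discussion of this point is muddled and should be removed.
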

Clearly, there is a restriction on the range of $\varrho,\delta$ and $p$, that is $0<\varrho=\delta\leq\frac{p}{2}$ with $\varrho<1$ and $p\neq1$. Recently, this restriction on $\varrho,\delta$ is extended to $0\leq\varrho=\delta<1$ by Park and Tomita \cite{Park,Park3} and to $0\leq\varrho\leq1,$ $0\leq\delta<1$ when $p=2$ in \cite{W}. However, the case of $1<p<2$, $0\leq\varrho\leq1,$ $0\leq\varrho<\delta<1$ and $p=1$, $0\leq\varrho\leq1,$ $0\leq\delta<1$ seems to be not clear. Particularly, there is no corresponding result in case $p=1$, but a weaker version is obtained by Michalowski, Rule and Staubach \cite{Michalowski}.
\begin{thm}[Michalowski, Rule and Staubach \cite{Michalowski}]\label{MRS}
Let $0<\varrho\leq1$, $0\leq\delta<1$ and $1<p<\infty$. If $a\in S^{-n(1-\varrho)}_{\varrho,\delta}$, then
$$M^{\sharp}(T_{a}f)(x)\lesssim M_{p}f(x).$$
\end{thm}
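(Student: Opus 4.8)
The plan is to reduce to a pointwise estimate for the pieces of a Littlewood–Paley decomposition of the symbol and then sum. First I would write $a(x,\xi)=\sum_{j\ge 0}a_j(x,\xi)$, where $a_j(x,\xi)=a(x,\xi)\varphi_j(\xi)$ for a standard dyadic partition of unity with $\varphi_j$ supported in $\{|\xi|\sim 2^j\}$ (and $\varphi_0$ in a ball); correspondingly $T_a f=\sum_j T_{a_j} f$. Each $T_{a_j}$ has a kernel $K_j(x,x-y)=(2\pi)^{-n}\int e^{i\langle x-y,\xi\rangle}a_j(x,\xi)\,d\xi$, and from the symbol estimates with $m=-n(1-\varrho)$ one gets, by integration by parts in $\xi$ exploiting the $\varrho$-gain in each $\xi$-derivative, the localized bounds $|K_j(x,z)|\lesssim 2^{jn}2^{-jn(1-\varrho)}\langle 2^{j\varrho}z\rangle^{-N}$ for every $N$, together with the derivative estimate $|\nabla_z K_j(x,z)|\lesssim 2^{j}\cdot 2^{jn\varrho}\langle 2^{j\varrho}z\rangle^{-N}$ (here one uses $0\le\delta<1$ only to absorb the $x$-derivatives harmlessly into constants, since no $x$-regularity of the kernel in the first slot is needed for the $M_p$ bound — only size and $z$-smoothness matter). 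Thus $T_{a_j}$ behaves like a convolution operator at scale $2^{-j\varrho}$ with an $L^1$-normalized, rapidly decaying kernel.

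Next, fix a cube $Q$ containing $x$ with side length $\ell(Q)=r$, and split $f=f_1+f_2$ with $f_1=f\chi_{2\sqrt n Q}$ and $f_2=f-f_1$. For the "local" part one estimates $\frac{1}{|Q|}\int_Q |T_a f_1|\le \big(\frac{1}{|Q|}\int_Q |T_a f_1|^p\big)^{1/p}$ and applies the $L^p$ boundedness of $T_a$ (available from Theorem \ref{Tz} since $-n(1-\varrho)\le -n(1-\varrho)|\tfrac12-\tfrac1p|$ when $p>1$), giving a bound by $\big(\frac{1}{|2\sqrt n Q|}\int_{2\sqrt n Q}|f|^p\big)^{1/p}\lesssim M_p f(x)$. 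For the "far" part, choose the constant $c=T_a f_2(x)$ (or rather $c_Q=\sum_j \int K_j(x_Q, x_Q-y)f_2(y)\,dy$ centered at the center $x_Q$ of $Q$) and estimate, for $z\in Q$,
\begin{equation*}
|T_a f_2(z)-c_Q|\le \sum_{j\ge0}\int_{|y-x_Q|>r}|K_j(z,z-y)-K_j(x_Q,z-y)|\,|f(y)|\,dy + (\text{terms from }z\leftrightarrow x_Q).
\end{equation*}
Here the crux is the split into $2^{j\varrho}r\le 1$ and $2^{j\varrho}r>1$. In the first regime, use the mean value theorem in $z$: the kernel difference is $\lesssim r\,2^{j}2^{jn\varrho}\langle 2^{j\varrho}(y-x_Q)\rangle^{-N}$; integrating in $y$ over $|y-x_Q|\sim 2^k r$ and summing the geometric series in $j$ and $k$ gives $\lesssim M f(x)\le M_p f(x)$. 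In the second regime $2^{j\varrho}r>1$, one does not differentiate but uses size alone: $\sum_{2^{j\varrho}r>1}\int_{|y-x_Q|>r}|K_j(z,z-y)|\,|f(y)|\,dy \lesssim \sum_{2^{j\varrho}r>1} 2^{-jn(1-\varrho)}\,(2^{j\varrho})^{n}\,r^{-?}\cdots$ — more precisely, Hölder's inequality with exponent $p$ against the tail $\langle 2^{j\varrho}z\rangle^{-N}$ (legal since $N$ can be taken as large as we wish, so $\langle 2^{j\varrho}\cdot\rangle^{-N}\in L^{p'}$) produces $\lesssim \sum_{2^{j\varrho}r>1} 2^{-jn(1-\varrho)/p}\cdot(\text{something}\le M_p f(x))$, and the exponent $-n(1-\varrho)/p<0$ makes the $j$-sum converge. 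This is exactly where the order $-n(1-\varrho)$ and the passage to the $p$-maximal function $M_p$ (rather than $M$) are both essential, and matches the statement.

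I expect the main obstacle to be the bookkeeping in the "far" part when $\varrho$ is small: the kernels are concentrated at the coarse scale $2^{-j\varrho}$ rather than $2^{-j}$, so the naive Calderón–Zygmund cancellation over a cube of side $r$ only helps once $j$ is large enough that $2^{-j\varrho}\lesssim r$, and for the intermediate range of $j$ one is forced to pay with the $L^p$-Hölder argument above; ensuring the two regimes glue together with absolutely convergent sums, uniformly in $Q$ and $x$, is the delicate point. The hypothesis $0\le\delta<1$ enters only to guarantee that finitely many $x$-derivatives of $a$ cost a fixed power of $\langle\xi\rangle^{\delta}$ that is dominated after the $\varrho$-gains from the $\xi$-integrations by parts — it plays no role in the cancellation, which is why $\delta$ may be as large as we like below $1$, in contrast to Theorem \ref{MY}.
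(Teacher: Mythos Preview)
This theorem is quoted from \cite{Michalowski} and is not proved in the present paper; the paper instead establishes the sharper Theorems~\ref{TH2} and~\ref{TH3}. So there is no proof of Theorem~\ref{MRS} here to compare against directly, but the machinery of Section~2 is the natural benchmark.

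Your architecture---Littlewood--Paley decomposition, kernel bounds $|K_j(x,w)|\lesssim 2^{jn\varrho}\langle 2^{j\varrho}w\rangle^{-N}$, local/far split of $f$, $L^p$-boundedness on the local piece---is correct, but the far piece has a genuine gap. With your cut at $2^{j\varrho}r=1$, the mean-value estimate in the first regime contributes $\lesssim r\,2^{j}\,Mf(x)$ per frequency, and the sum over $2^{j}\le r^{-1/\varrho}$ is $\sim r\cdot r^{-1/\varrho}=r^{1-1/\varrho}\to\infty$ as $r\to0$ whenever $\varrho<1$. If instead you cut at $2^{j}r=1$, the mean-value range is fine and the tail range $2^{j\varrho}r>1$ is fine, but the intermediate window $r^{-1}<2^j<r^{-1/\varrho}$ is left over: there the kernel is still spread over scale $2^{-j\varrho}\gg r$, so the tail argument gives no decay, and your H\"older step does not deliver the claimed factor $2^{-jn(1-\varrho)/p}$ (one computes $\|K_j(z,\cdot)\|_{L^{p'}}\sim 2^{jn\varrho/p}$, which \emph{grows} in $j$). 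Pointwise kernel bounds alone cannot close this; one must bring in an $L^p\to L^2$ mapping estimate for $T_j$ together with a secondary localisation of $f$ at an optimized radius, exactly as in the paper's Lemma~\ref{L9} (and Lemmas~\ref{L7}, \ref{L70} for the finer splittings). Once that device is inserted for the range $r^{-1}\le 2^j\le r^{-1/\varrho}$, the rest of your outline goes through.
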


The first main result of this paper is a generalization of Theorem \ref{MY}. And the operator $T^{*}_{a}$ is considered as well.

\begin{thm}\label{TH2}
Let $0\leq\varrho\leq1$, $0\leq\delta<1$ and $1<p\leq2$. If $a\in S^{-n(1-\varrho)/p}_{\varrho,\delta}$, then
$$M^{\sharp}(T_{a}f)(x)\lesssim M_{p}f(x).$$
If $a\in S^{-\frac{n}{p}(1-\varrho)-\frac{n}{2}\max\{\delta-\varrho,0\}}_{\varrho,\delta}$, then
$$M^{\sharp}(T^{*}_{a}f)(x)\lesssim M_{p}f(x).$$
\end{thm}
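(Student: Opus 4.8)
The plan is to prove the sharp maximal function estimate by the standard localization argument: fix $x$ and a cube $Q\ni x$, and estimate $\frac{1}{|Q|}\int_Q |T_a f(y)-c_Q|\,dy$ for a suitable constant $c_Q$. Write $f=f_1+f_2$ where $f_1=f\chi_{2Q}$ is the local part and $f_2=f\chi_{(2Q)^c}$ is the far part. For the local part one uses the $L^p\to L^p$ boundedness of $T_a$ of order $m=-n(1-\varrho)/p$ (available from Theorem \ref{Tz}, since that order is below the threshold $-n(1-\varrho)|1/2-1/p|-n\max\{\delta-\varrho,0\}/\max\{p,2\}$ when $\delta\le\varrho$, and one must check it also covers $0\le\varrho<\delta<1$ — this is where the improved order $-n(1-\varrho)/p$ rather than $-n(1-\varrho)$ matters) together with H\"older's inequality:
\begin{equation*}
\frac{1}{|Q|}\int_Q |T_a f_1(y)|\,dy \le \Big(\frac{1}{|Q|}\int_Q |T_a f_1|^p\Big)^{1/p}\lesssim \Big(\frac{1}{|Q|}\int_{2Q}|f|^p\Big)^{1/p}\lesssim M_p f(x).
\end{equation*}
The choice $c_Q = T_a f_2(x_Q)$ (value of the far part at the center of $Q$, or an average thereof) then reduces the far part to a kernel regularity estimate: one needs
$|T_a f_2(y) - T_a f_2(x_Q)|$ to be controlled by $M_p f(x)$ uniformly for $y\in Q$.

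The core of the argument is therefore a pointwise kernel estimate for $T_a$ with $a\in S^{-n(1-\varrho)/p}_{\varrho,\delta}$. First I would make the standard Littlewood–Paley decomposition $a = \sum_{j\ge 0} a_j$ with $a_j$ supported in $\langle\xi\rangle\sim 2^j$, giving kernels $K_j(x,z)$ with the well-known bounds
\begin{equation*}
|K_j(x,z)| \lesssim 2^{j(n+m)}(1+2^{j\varrho}|z|)^{-N}\quad\text{when }2^{j\varrho}|z|\ge 1,\qquad \text{plus lower-order corrections using the }\delta\text{-gain in }x,
\end{equation*}
and analogous bounds for $\nabla_z K_j$ and $\nabla_x K_j$ with an extra factor $2^j$. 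Summing in $j$ and splitting the $z$-integral into dyadic annuli $|z|\sim 2^k$ (with $2\ell(Q)\lesssim 2^k$), the contribution of each annulus to $|T_a f_2(y)-T_a f_2(x_Q)|$ is bounded, after H\"older's inequality in $z$ with exponents $p$ and $p'$, by $2^{kn/p'}\|K(y,\cdot)-K(x_Q,\cdot)\|_{L^{p'}(|z|\sim 2^k)}\cdot (2^{-kn}\int_{|z|\sim 2^k}|f(x-z)|^p)^{1/p}$; the last factor is $\lesssim M_p f(x)$, and the point is that the order $m=-n(1-\varrho)/p$ is precisely what makes $\sum_k 2^{kn/p'}\|K(y,\cdot)-K(x_Q,\cdot)\|_{L^{p'}(|z|\sim 2^k)}$ summable (with a gain from $\ell(Q)/|z|$ coming from the mean value theorem applied to $K$). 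The arithmetic here is the same computation that underlies Theorem \ref{MY}; the novelty is carrying the extra parameter $\delta$ through, using that differentiation in $x$ costs only $2^{j\delta}$ while differentiation in $z$ costs $2^j$, so the relevant gain per annulus is by a power of $2^{k}$ that remains positive as long as $\delta<1$.

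For the dual operator $T^*_a$, the kernel is $K^*(x,z) = \overline{K(z,x)}$ in the appropriate sense, i.e. the roles of the smooth variable and the free variable are swapped; now the amplitude $a(y,\xi)$ does \emph{not} help to regularize in the output variable, so the kernel has worse size/smoothness and one pays an extra $2^{j\frac n2\max\{\delta-\varrho,0\}}$, which is exactly compensated by assuming $a\in S^{-\frac np(1-\varrho)-\frac n2\max\{\delta-\varrho,0\}}_{\varrho,\delta}$. Concretely I would repeat the annular decomposition but use the $L^p\to L^p$ bound for $T^*_a$ from Theorem \ref{Tz} for the local part (whose threshold for $1<p\le2$ is $-n(1-\varrho)(1/p-1/2)-n\max\{\delta-\varrho,0\}(1-1/p)$, comfortably above the assumed order) and the modified kernel estimate for the far part. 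The main obstacle I anticipate is the far-part kernel estimate for $T^*_a$ when $\varrho<\delta$: because there is no $x$-smoothing, the naive kernel bound decays only like $(1+2^{j\varrho}|z|)^{-N}$ without the usual improvement, and one must instead exploit oscillation/non-stationary phase in the $\xi$-integral together with the $2^{j\delta}$-controlled $y$-variation — this is precisely the step that forces the extra $\frac n2\max\{\delta-\varrho,0\}$ loss and that has no analogue in the treatment of $T_a$, so it requires the most care.
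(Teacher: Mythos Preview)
Your near/far split of $f$ is a reasonable opening, and the local piece via $L^p$-boundedness is fine. The gap is in the far piece: estimating $\|K_j(y,\cdot)-K_j(x_Q,\cdot)\|_{L^{p'}(|z-x_Q|\sim R)}$ from the \emph{pointwise} bound $|K_j(x,z)|\lesssim 2^{j(n+m)}(1+2^{j\varrho}|x-z|)^{-N}$ and its gradient analogue, then summing in $j$ and over annuli $R\ge 2\ell$, produces a final bound of order $\ell^{n(p-1)(\varrho-1)/(p\varrho)}$, which diverges as $\ell\to0$ for every $0<\varrho<1$. This is precisely why the pointwise-kernel route of Theorem~\ref{MRS} only reaches $m=-n(1-\varrho)$, not $-n(1-\varrho)/p$. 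The sharp order requires $L^2$/Parseval information about $K_j$, and the Miyachi--Yabuta argument you invoke uses such information but only under $\varrho=\delta\le p/2$; your one-line claim that it extends to all $\delta<1$ because ``differentiation in $x$ costs only $2^{j\delta}$'' is the assertion to be proved, not a proof.

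The paper's argument is organized differently and supplies the missing mechanism. It does \emph{not} split $f$; it bounds $|Q|^{-2}\iint_{Q\times Q}|T_au(x)-T_au(z)|\,dz\,dx$ via $T_a=\sum_jT_j$, uses the kernel-difference bound only when $2^j\ell<1$ (Lemma~\ref{L8}), and for $2^j\ell\ge1$ estimates $|Q|^{-1}\int_Q|T_ju|$ directly by combining the $(L^p,L^2)$ bound of Lemma~\ref{LL} with a secondary localization of $u$ at an intermediate scale (Lemmas~\ref{L9},~\ref{L70}). The crucial device for $\varrho<\delta$ is Lemma~\ref{L7}: cover $Q(x_0,\ell)$ by subcubes of sidelength $\ell^\lambda$, \emph{freeze} the $x$-variable of $a(x,\xi)$ at each subcube center so that Parseval applies cleanly, control the freezing error by $\ell^\lambda 2^{j\delta}$, and then iterate $\lambda$ through $1,1/\delta,1/\delta^2,\dots$ until $\lambda\ge1/\varrho$. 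This freezing-and-iterating step is the heart of the extension to general $\varrho,\delta$ and has no analogue in your proposal. For $T^*_a$ even the kernel-difference estimate~\eqref{E0120} is not obtained pointwise: the paper rewrites the relevant integral as $T^*_{b_j}g_j$ for an auxiliary symbol $b_j$ and applies Lemma~\ref{LL}, which is exactly where the extra $\frac n2\max\{\delta-\varrho,0\}$ enters.
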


The second main result of this paper is extending $p$ in Theorem \ref{MRS} to the extreme case $p=1$.
\begin{thm}\label{TH3}
Let $0<\varrho\leq1$, $0\leq\delta<1$ and $0<\epsilon<1$. If $a\in S^{-n(1-\varrho)}_{\varrho,\delta}$, then
\begin{equation*}\label{MM}
M^{\sharp}_{\epsilon}(T_{a}f)(x)\lesssim Mf(x)\quad  and  \quad M^{\sharp}_{\epsilon}(T^{*}_{a}f)(x)\lesssim Mf(x).
\end{equation*}
\end{thm}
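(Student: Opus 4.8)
The plan is to follow the standard scheme for sharp maximal function estimates for pseudo-differential operators, adapted to the endpoint $p=1$ by working with the $\epsilon$-th power regularization $M^{\sharp}_{\epsilon}$. Fix a cube $Q$ centered at $x_0$ with side length $\ell$, and split the input as $f=f_1+f_2$ where $f_1=f\chi_{2Q}$ and $f_2=f\chi_{(2Q)^c}$. One needs to produce a constant $c_Q$ (which will be $T_af_2(x_0)$ or an analogous average) such that
$$
\Big(\frac{1}{|Q|}\int_Q |T_af(y)-c_Q|^{\epsilon}\,dy\Big)^{1/\epsilon}\lesssim Mf(x).
$$
For the local part $f_1$, since $0<\epsilon<1$ we invoke Kolmogorov's inequality together with a weak-type $(1,1)$ bound for $T_a$ (valid because $a\in S^{-n(1-\varrho)}_{\varrho,\delta}$ gives, via Theorem~\ref{Tz} and interpolation, or directly via known Calder\'on--Zygmund theory for this subcritical order, the $(H^1,L^1)$ and weak-$(1,1)$ continuity) to get
$$
\Big(\frac{1}{|Q|}\int_Q |T_af_1(y)|^{\epsilon}\,dy\Big)^{1/\epsilon}\lesssim \frac{1}{|2Q|}\int_{2Q}|f|\lesssim Mf(x).
$$
This is exactly the step where the gain from $\epsilon<1$ is essential: at $\epsilon=1$ one would need a strong $L^1$ bound, which fails, so the passage to $M^{\sharp}_{\epsilon}$ is what makes $p=1$ accessible.

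For the far part $f_2$, I would decompose the symbol dyadically in $\xi$, writing $a=\sum_{j\ge 0}a_j$ with $a_j$ supported in $\langle\xi\rangle\sim 2^j$, and correspondingly $T_a f_2=\sum_j T_{a_j}f_2$. The kernel $K_j(x,y)$ of $T_{a_j}$ satisfies, by integration by parts exploiting $\varrho$-decay in $\xi$ and $\delta$-growth in $x$ (with $\delta<1$ ensuring the gain $2^{j\delta}$ is beaten by any number of $\varrho$-derivatives since we can take $N$ arbitrarily large), the pointwise bounds
$$
|K_j(x,y)|\lesssim 2^{j(n - n(1-\varrho))}\frac{2^{-jN\varrho}}{(1+2^{j\varrho}|x-y|)^{N}}\quad\text{for any }N,
$$
together with analogous bounds for $\nabla_x K_j$ carrying an extra factor $2^{j\varrho}$ (the $\delta$-growth costs only $2^{j\delta}$ which, again by taking $N$ large, is absorbed). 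Summing in $j$ gives the two Calder\'on--Zygmund-type estimates: a size bound $|K(x,y)|\lesssim |x-y|^{-n}$ and a smoothness bound $|\nabla_x K(x,y)|\lesssim |x-y|^{-n-1}$ away from the diagonal. With $c_Q=T_af_2(x_0)$, the mean value theorem and these kernel estimates give the usual telescoping estimate
$$
|T_af_2(y)-T_af_2(x_0)|\lesssim \sum_{k\ge 1}\int_{2^{k+1}Q\setminus 2^kQ}\frac{\ell}{|z-x_0|^{n+1}}|f(z)|\,dz\lesssim \sum_{k\ge 1}2^{-k}\,Mf(x)\lesssim Mf(x)
$$
for $y\in Q$, and since $\epsilon<1$ the $L^{\epsilon}(Q)$-average of this is controlled the same way. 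Combining the local and far estimates and taking the supremum over cubes $Q\ni x$ yields $M^{\sharp}_{\epsilon}(T_af)(x)\lesssim Mf(x)$.

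For $T^{*}_a$ one argues identically: $T^{*}_a$ has the same subcritical order $-n(1-\varrho)$, its kernel is $\overline{K(y,x)}$-type with the roles of the two variables interchanged, and one verifies the mirror-image kernel bounds (now needing smoothness in the \emph{second} variable, which is the one carrying $\varrho$-decay, so this is actually the easier direction), yielding the same weak-$(1,1)$ and Calder\'on--Zygmund estimates; the split-and-estimate argument then goes through verbatim. The main obstacle is the bookkeeping in the far part: one must ensure the dyadic kernel estimates hold \emph{uniformly} with the constants independent of $j$, which requires choosing the number of integrations by parts $N$ large relative to both $n$ and the loss $\delta/\varrho$ (permissible precisely because $\delta<1$ and $\varrho>0$), and then verifying that the resulting geometric series in $j$ and in $k$ both converge; for $T^{*}_a$ one additionally has to be careful that the order $m=-n(1-\varrho)$ is still subcritical for the dual (it is, since the dual's $L^2$-critical order is lower, not higher, in the relevant regime), so no extra derivatives of the symbol are lost.
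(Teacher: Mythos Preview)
Your far-part kernel estimate is where the argument breaks down for $\varrho<1$. Differentiating $K_{j}(x,y)=\int e^{i\langle x-y,\xi\rangle}a_{j}(x,\xi)\,d\xi$ in $x$ hits the phase and produces a factor $i\xi$, so the extra cost is $2^{j}$, not $2^{j\varrho}$ as you wrote. The correct pointwise bound is
\[
|\nabla_{x}K_{j}(x,y)|\lesssim \frac{2^{j(n\varrho+1)}}{(1+2^{j\varrho}|x-y|)^{N}},
\]
and summing this over $j$ gives $|\nabla_{x}K(x,y)|\lesssim |x-y|^{-n-1/\varrho}$, not $|x-y|^{-n-1}$. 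Plugging this into your annular decomposition for $T_{a}f_{2}(y)-T_{a}f_{2}(x_{0})$ produces
\[
\sum_{k\ge 1}\ell\,(2^{k}\ell)^{-n-1/\varrho}(2^{k}\ell)^{n}\,Mf(x)\;\sim\;\ell^{\,1-1/\varrho}\,Mf(x),
\]
which blows up as $\ell\to 0$ whenever $\varrho<1$. In other words, the kernel of $T_{a}$ is \emph{strongly singular} in the sense of \'Alvarez--Milman/\'Alvarez--Hounie, and the standard Calder\'on--Zygmund near--far split at $2Q$ cannot close. (Your argument is fine for $\varrho=1$, where the intermediate range below is empty.)

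The paper's remedy is to keep the Littlewood--Paley decomposition $T_{a}=\sum_{j}T_{j}$ throughout and treat three frequency regimes separately. For $2^{j}<\ell^{-1}$ one controls $|T_{j}u(x)-T_{j}u(z)|$ directly by the kernel-difference Lemma~\ref{L3}, summing the geometric series in $j$. For the high range $2^{j}>\ell^{-1/\varrho}$ one uses weak~$(1,1)$ for $T_{j}$ together with Kolmogorov (Lemma~\ref{L1}); here the extra room in the symbol order makes each $T_{j}$ small, and this is where the exponent~$\epsilon<1$ enters. The real work is the intermediate range $\ell^{-1}\le 2^{j}\le \ell^{-1/\varrho}$, which your scheme does not see: the paper handles it by an $L^{2}$-based estimate after Minkowski (Lemma~\ref{La4}), and when $\varrho<\delta$ by a further subdivision of $Q$ into sub-cubes of side $\ell^{\lambda}$, $1\le\lambda\le 1/\varrho$, freezing $x$ on each sub-cube to beat the $2^{j\delta}$ loss (Lemma~\ref{La5}). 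This intermediate-range argument is what replaces the failed standard smoothness bound, and it is genuinely needed.
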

Interesting that the order of $T^{*}_{a}$ in Theorem \ref{TH2} seem to be improved when $p=1$. It is not cleat that if the order of $T^{*}_{a}$ can be improved in the case $1<p\leq2$. Another interesting thing is that the second estimate holds with $a\in L^{\infty}S^{-n(1-\varrho)}_{\varrho}$ in case $0<\varrho<1$.
\begin{thm}\label{TH5}
Let $0<\varrho<1$.
If $a\in L^{\infty}S^{-n(1-\varrho)}_{\varrho}$ then
$$M^{\sharp}(T^{*}_{a}f)(x)\lesssim Mf(x).$$
\end{thm}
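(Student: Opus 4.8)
The plan is to prove the pointwise bound
$$M^{\sharp}(T^{*}_{a}f)(x)\lesssim Mf(x)$$
for $a\in L^{\infty}S^{-n(1-\varrho)}_{\varrho}$ with $0<\varrho<1$ by the standard Fefferman--Stein localization argument. Fix a cube $Q$ containing $x$, with center $x_{Q}$ and side length $\ell$; write $2Q$ for the concentric double. Split $f=f_{1}+f_{2}$ with $f_{1}=f\chi_{2Q}$ and $f_{2}=f\chi_{(2Q)^{c}}$. We must choose a suitable constant $c_{Q}$ (to be $T^{*}_{a}f_{2}(x_{Q})$, or a smoothed version of it) and estimate
$$\frac{1}{|Q|}\int_{Q}|T^{*}_{a}f(y)-c_{Q}|\,dy\le \frac{1}{|Q|}\int_{Q}|T^{*}_{a}f_{1}(y)|\,dy+\frac{1}{|Q|}\int_{Q}|T^{*}_{a}f_{2}(y)-c_{Q}|\,dy.$$

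For the local term, I would use the decomposition of $a$ into a low-frequency and dyadic high-frequency pieces (à la Littlewood--Paley in $\xi$), and appeal to the $L^{2}$ boundedness of $T^{*}_{a}$ available for symbols in $L^{\infty}S^{m}_{\varrho}$ with $m\le -\frac{n}{2}(1-\varrho)$ (this is classical, and certainly implied by the arguments underlying Theorems~\ref{Tz} and the $(L^\infty,BMO)$ results cited in the introduction); since $-n(1-\varrho)\le -\frac{n}{2}(1-\varrho)$ this applies. By Cauchy--Schwarz,
$$\frac{1}{|Q|}\int_{Q}|T^{*}_{a}f_{1}(y)|\,dy\le\Big(\frac{1}{|Q|}\int_{Q}|T^{*}_{a}f_{1}|^{2}\Big)^{1/2}\lesssim |Q|^{-1/2}\|f_{1}\|_{2}=\Big(\frac{1}{|Q|}\int_{2Q}|f|^{2}\Big)^{1/2}.$$
To replace the $L^{2}$ average by $Mf(x)$ one would, as in \cite{Michalowski}, instead bound $M^{\sharp}_{\epsilon}$ for $\epsilon<1$ or invoke a weak-$(1,1)$ estimate for $T^{*}_{a}$; alternatively, since $0<\varrho<1$ strictly, the extra decay $-n(1-\varrho)$ (as opposed to $-\frac n2(1-\varrho)$) gives a true $(H^1,L^1)$/weak-$(1,1)$ bound for $T^{*}_{a}$ (cf. Theorem~\ref{TH1}), whence $|Q|^{-1}\int_{Q}|T^{*}_{a}f_1|\lesssim |2Q|^{-1}\|f_1\|_1\le Mf(x)$ by Kolmogorov's inequality. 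I would use this weak-$(1,1)$ route, which is exactly where the gain $-n(1-\varrho)$ over $-\frac{n}{2}(1-\varrho)$ is spent.

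For the non-local term the key is a kernel estimate: writing $K^{*}(y,z)=(2\pi)^{-n}\int e^{i\langle y-z,\xi\rangle}a(z,\xi)\,d\xi$ (as an oscillatory integral), one shows for $|y-x_{Q}|\le\ell$ and $|z-x_{Q}|\ge 2\ell$ the Hörmander-type difference bound
$$|K^{*}(y,z)-K^{*}(x_{Q},z)|\lesssim \frac{\ell^{\,\theta}}{|z-x_{Q}|^{\,n+\theta}}$$
for some $\theta>0$ (this is where $m=-n(1-\varrho)$ and $0<\varrho<1$ enter; integration by parts in $\xi$ using the symbol estimates, together with the standard splitting $|\xi|\lesssim |z-x_{Q}|^{-\varrho}$ versus $|\xi|\gtrsim|z-x_{Q}|^{-\varrho}$, yields such a bound with $\theta=\min\{1,\,\text{something positive depending on }\varrho\}$). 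Granting this, take $c_{Q}=T^{*}_{a}f_{2}(x_{Q})$; then for $y\in Q$,
$$|T^{*}_{a}f_{2}(y)-c_{Q}|\le\int_{(2Q)^{c}}|K^{*}(y,z)-K^{*}(x_{Q},z)|\,|f(z)|\,dz\lesssim\sum_{k\ge1}\frac{\ell^{\theta}}{(2^{k}\ell)^{\theta}}\frac{1}{(2^{k}\ell)^{n}}\int_{2^{k+1}Q}|f|\lesssim Mf(x),$$
the geometric series in $2^{-k\theta}$ converging. Averaging over $y\in Q$ and combining with the local estimate gives the claim. The main obstacle is the nonlocal kernel-difference estimate: unlike the case of $T_a$ (where the $x$-variable of the symbol sits outside the oscillation and one differentiates only the amplitude), for $T^{*}_{a}$ the variable $z$ appears \emph{both} in the phase and in $a(z,\xi)$, so integration by parts in $\xi$ requires care, and one must extract honest polynomial decay in $|z-x_{Q}|$ with a positive Hölder exponent from only the order $-n(1-\varrho)$ — this is precisely the point where $\varrho<1$ is essential and where the rough ($L^{\infty}$ in $x$) hypothesis still suffices because no $\xi$-derivatives of the amplitude beyond those allowed by the $S^{m}_{\varrho}$ bounds are needed.
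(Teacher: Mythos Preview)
Your approach has a genuine gap in the non-local term, and the local term is mis-argued (though salvageable).

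\textbf{Local term.} Kolmogorov's inequality, applied to a weak-$(1,1)$ operator, yields only
$\bigl(|Q|^{-1}\int_{Q}|T^{*}_{a}f_{1}|^{\epsilon}\bigr)^{1/\epsilon}\lesssim|Q|^{-1}\|f_{1}\|_{1}$ for $\epsilon<1$; it does \emph{not} give the $L^{1}$ average $|Q|^{-1}\int_{Q}|T^{*}_{a}f_{1}|$. What actually rescues this step is the honest $L^{1}\to L^{1}$ boundedness of $T^{*}_{a}$: since $-n(1-\varrho)<-\tfrac{n}{2}(1-\varrho)$, the Kenig--Staubach $L^{\infty}$ bound for $T_{a}$ dualizes to $\|T^{*}_{a}g\|_{L^{1}}\lesssim\|g\|_{L^{1}}$, whence $|Q|^{-1}\int_{Q}|T^{*}_{a}f_{1}|\le|Q|^{-1}\|T^{*}_{a}f_{1}\|_{1}\lesssim|Q|^{-1}\|f_{1}\|_{1}\lesssim Mf(x)$.

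\textbf{Non-local term.} The claimed Calder\'on--Zygmund regularity $|K^{*}(y,z)-K^{*}(x_{Q},z)|\lesssim\ell^{\theta}|z-x_{Q}|^{-n-\theta}$ is \emph{false} for these kernels when $\ell$ is small. For fixed $z$ the function $w\mapsto\widehat{a(z,\cdot)}(w)$ obeys (for $|w|\le1$) the bounds $|\widehat{a(z,\cdot)}(w)|\lesssim|w|^{-n}$ and $|\nabla\widehat{a(z,\cdot)}(w)|\lesssim|w|^{-n-1/\varrho}$; the gradient exponent is $-n-1/\varrho$, not $-n-1$. Thus with $r=|z-x_{Q}|$ one only has
$|K^{*}(y,z)-K^{*}(x_{Q},z)|\lesssim\min\bigl(r^{-n},\ \ell\,r^{-n-1/\varrho}\bigr)$.
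On the intermediate shell $2\ell\le r\le\ell^{\varrho}$ the second bound is worse than the first, so one is forced to use $r^{-n}$, and the dyadic sum over that shell contributes $\sim(1-\varrho)\log(1/\ell)\,Mf(x)$, which diverges as $\ell\to0$. In other words, the kernel satisfies a H\"ormander condition only at the anisotropic scale $\ell^{\varrho}$, not at scale $\ell$; but enlarging the local cut to scale $\ell^{\varrho}$ then breaks the local estimate, since $|Q|^{-1}\|f\chi_{C\ell^{\varrho}}\|_{1}\sim\ell^{n(\varrho-1)}Mf$ blows up.

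\textbf{What the paper does instead.} The paper never sums the Littlewood--Paley pieces into a single kernel. It keeps $T^{*}_{a}=\sum_{j}T^{*}_{j}$ and treats three $j$-ranges separately for each cube of side $\ell<1$: for $2^{j}<\ell^{-1}$ a pointwise kernel-difference bound of size $2^{j}\ell\,Mf$ (Remark~\ref{R41}); for $2^{j}>\ell^{-1/\varrho}$ an $L^{1}$-bound on $T^{*}_{j}$ plus integration by parts (the unnumbered lemma after the proof of Theorem~\ref{TH5}); and, crucially, for the middle range $\ell^{-1}\le2^{j}\le\ell^{-1/\varrho}$ an $L^{2}$-based estimate via H\"older, Minkowski and Parseval (Lemma~\ref{La6}) giving $|Q|^{-1}\int_{Q}|T^{*}_{j}f|\lesssim2^{-jn/2}\ell^{-n/2}Mf$, which sums geometrically over that range. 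This middle-range argument is precisely the missing idea your CZ-style split cannot supply.
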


As we know, the pointwise estimates can give some weighted $L^{p}$ inequalities.  Recall that a
nonnegative locally integrable function $\omega$ belongs to the class of Muckenhoupt $A_{p}$ weights if there exists a constant $C>0$ such that

\begin{eqnarray}
&\sup\limits_{Q\subset\mathbb{R}^{n}}\big(\frac{1}{|Q|}\int_{Q}\omega(x)dx\big)
\big(\frac{1}{|Q|}\int_{Q}\omega(x)^{\frac{1}{1-p}}dx\big)^{p-1}\leq C\quad \mathrm{when} \quad 1<p<\infty;\label{Ap}\\
&M\omega(x)\leq C\omega(x) \quad \mathrm{~for ~almost~ all~} x\in\mathbb{R}^{n} \quad \mathrm{when} \quad p=1.\label{A1}
\end{eqnarray}
For $p=\infty$, one define $A_{\infty}:=\cup_{p>1}A_{p}$. The smallest constant appearing in (\ref{Ap}) or (\ref{A1}) is called the $A_{p}$ constant of $\omega$ which is denoted by $[\omega]_{p}$. The usual notation that
$$\|u\|^{p}_{L^{p}_{\omega}}=\int_{\mathbb{R}^{n}}|u(x)|^{p}\omega(x)dx~\mathrm{and}~
\|u\|^{p}_{L^{p,\infty}_{\omega}}=\sup_{\lambda>0}\lambda^{p}\omega(x\in\mathbb{R}^{n}:|u(x)|>\lambda) $$
will be adopted in this paper. The weighted $L^{p}$ estimates for pseudo-differential operators has been a topic extensively studied, specially in the 1980s \cite{Chanillo,MiyachiY,Journe,Hounie}, later improved by Michalowski et.al \cite{Michalowski,Michalowski1} in the late 2000s and revisited in \cite{W,Park} recently.
\begin{thm}[Wang \cite{W}]\label{W}
Let $0\leq\varrho\leq1,$ $0\leq\delta<1$, $1\leq r\leq2$ and $a(x,\xi)\in S^{-\frac{n}{r}(1-\varrho)}_{\varrho,\delta}$. Suppose $\omega\in A_{p/r}$ with $r<p<\infty$. Then there is a constant $C$ independent of $a$ and $u$, such that
\begin{equation}\label{weighted}
\|T_{a}u\|_{L^{p}_{\omega}}\leq C\|u\|_{L^{p}_{\omega}}.
\end{equation}
\end{thm}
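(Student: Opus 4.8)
The plan is to prove Theorem \ref{W} by combining the Fefferman--Stein sharp maximal function machinery with the pointwise estimate $M^{\sharp}(T_{a}u)(x) \lesssim M_{r}u(x)$ that follows from Theorem \ref{TH2} (with $p$ there renamed $r$; for $r=1$ we instead use the $M^{\sharp}_{\epsilon}$ version from Theorem \ref{TH3}). First I would fix $r \in [1,2]$, $a \in S^{-\frac{n}{r}(1-\varrho)}_{\varrho,\delta}$, and $\omega \in A_{p/r}$ with $r < p < \infty$. The key classical ingredient is the weighted Fefferman--Stein inequality: for $0 < q < \infty$ and $\omega \in A_{\infty}$ one has $\|g\|_{L^{q}_{\omega}} \lesssim \|M^{\sharp}_{\epsilon}g\|_{L^{q}_{\omega}}$ for suitable $\epsilon$, provided $Mg$ (or $M_{\epsilon}g$) lies in $L^{q}_{\omega}$ so the left side is a priori finite. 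Applying this with $g = T_{a}u$, $q = p$, and $\epsilon < 1$ (also $\epsilon<r$), I get
\begin{equation*}
\|T_{a}u\|_{L^{p}_{\omega}} \lesssim \|M^{\sharp}_{\epsilon}(T_{a}u)\|_{L^{p}_{\omega}} \lesssim \|M_{r}u\|_{L^{p}_{\omega}} = \|M(|u|^{r})\|_{L^{p/r}_{\omega}}^{1/r}.
\end{equation*}

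Next, since $\omega \in A_{p/r}$ and $p/r > 1$, the Hardy--Littlewood maximal operator $M$ is bounded on $L^{p/r}_{\omega}(\mathbb{R}^{n})$ by the Muckenhoupt theorem, so $\|M(|u|^{r})\|_{L^{p/r}_{\omega}} \lesssim \||u|^{r}\|_{L^{p/r}_{\omega}} = \|u\|_{L^{p}_{\omega}}^{r}$, and raising to the power $1/r$ gives the desired bound $\|T_{a}u\|_{L^{p}_{\omega}} \lesssim \|u\|_{L^{p}_{\omega}}$. The case $r=1$ is handled identically, using $M^{\sharp}_{\epsilon}(T_{a}u)(x) \lesssim Mu(x)$ from Theorem \ref{TH3} together with boundedness of $M$ on $L^{p}_{\omega}$ for $\omega \in A_{p}$, $p>1$. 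The case $r=2$ uses $\varrho \le \delta <1$ or the general $0\le\varrho\le1$ range as provided by Theorem \ref{TH2} at $p=2$.

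The main obstacle, as usual with this argument, is the a priori finiteness required to legitimately invoke the Fefferman--Stein inequality: one must ensure $\|M_{\epsilon}(T_{a}u)\|_{L^{p}_{\omega}} < \infty$ before concluding $\|T_{a}u\|_{L^{p}_{\omega}} \lesssim \|M^{\sharp}_{\epsilon}(T_{a}u)\|_{L^{p}_{\omega}}$. To address this I would first prove the estimate for $u$ in a dense subclass where finiteness is transparent --- for instance $u \in C_{c}^{\infty}(\mathbb{R}^{n})$, for which $T_{a}u$ is smooth with enough decay (using the order $-\frac{n}{r}(1-\varrho) \le 0$ and standard symbolic calculus, so that $T_{a}u$ and its maximal functions are controlled pointwise by, say, $\langle x\rangle^{-N}$ up to a constant, which is in $L^{p}_{\omega}$ since $A_{p/r}$ weights grow at most polynomially) --- and then pass to general $u \in L^{p}_{\omega}$ by density and Fatou's lemma. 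An alternative, cleaner route is to use the truncation argument of Fefferman--Stein applied to $\min(|T_{a}u|,k)$ and let $k\to\infty$ via monotone convergence, which sidesteps the need to verify membership of the full maximal function a priori. Either way, once the finiteness issue is dispatched, the two displayed lines above complete the proof; no new estimate on the operator itself is needed beyond Theorems \ref{TH2} and \ref{TH3}.
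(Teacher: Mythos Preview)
Your proposal is correct and follows essentially the same approach the paper takes for its ``new proof'' of Theorem~\ref{W}: feed the pointwise sharp-function estimates of Theorems~\ref{TH2} and~\ref{TH3} into the weighted Fefferman--Stein inequality $\|M_{\epsilon}g\|_{L^{p}_{\omega}}\lesssim\|M^{\sharp}_{\epsilon}g\|_{L^{p}_{\omega}}$ for $\omega\in A_{\infty}$, and then close with the Muckenhoupt bound for $M$ on $L^{p/r}_{\omega}$. Your discussion of the a~priori finiteness needed to invoke Fefferman--Stein is a welcome technical detail that the paper leaves implicit.
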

Theorem \ref{W} is proved by some interpolations between $r=1$ and $r=2$. In this paper, a new proof will be given.

By  interpolation theory \cite[Theorem 5.5.3]{Bergh} and the famous Fefferman-Stein's inequalities \cite{Stein1}, that is,
$$\|M_{\epsilon}u\|_{L^{p}_{\omega}}\lesssim \|M^{\sharp}_{\epsilon}u\|_{L^{p}_{\omega}},\quad \|M_{\epsilon}u\|_{L^{p,\infty}_{\omega}}\lesssim \|M^{\sharp}_{\epsilon}u\|_{L^{p,\infty}_{\omega}}$$
for $0<\epsilon,p<\infty$ and $\omega\in A_{\infty}$,
Theorem \ref{TH2}, Theorem \ref{TH3} and Theorem \ref{TH5} lead to the following weighted $L^{p}$ inequalities.
\begin{thm}\label{weight2}
Let $0\leq\varrho\leq1,$ $0\leq\delta<1$ and $1\leq r\leq2$. For any $r\leq p<\infty$ $(1<p<\infty$ if $r=1)$ and $\omega\in A_{p/r}$, if $a(x,\xi)\in S^{-\frac{n}{r}(1-\varrho)}_{\varrho,\delta}$, then
\begin{equation*}\label{weighted2}
\|T_{a}u\|_{L^{p}_{\omega}}\leq C\|u\|_{L^{p}_{\omega}}.
\end{equation*}
if $a\in S^{-\frac{n}{r}(1-\varrho)-\frac{n}{2}\max\{\delta-\varrho,0\}}_{\varrho,\delta}$, then
\begin{equation*}\label{weighted2}
\|T^{*}_{a}u\|_{L^{p}_{\omega}}\leq C\|u\|_{L^{p}_{\omega}}.
\end{equation*}
\end{thm}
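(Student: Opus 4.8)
The plan is to derive all four inequalities of Theorem~\ref{weight2} from the pointwise sharp-maximal estimates already in hand (Theorems~\ref{TH2}, \ref{TH3}, and, for the rough-symbol refinement, \ref{TH5}), combined with the Fefferman--Stein inequality $\|M_{\epsilon}u\|_{L^{p}_{\omega}}\lesssim\|M^{\sharp}_{\epsilon}u\|_{L^{p}_{\omega}}$ (valid for $\omega\in A_{\infty}$) and the classical weighted boundedness $M\colon L^{q}_{\omega}\to L^{q}_{\omega}$ for $\omega\in A_{q}$, $1<q<\infty$. A first reduction: it suffices to prove the a priori bounds for $u$ in a dense subclass --- say $u\in\mathcal{S}(\mathbb{R}^{n})$, or $u$ bounded with compact support --- so that $T_{a}u$ and $T^{*}_{a}u$ decay fast enough that $M_{\epsilon}(T_{a}u),M_{\epsilon}(T^{*}_{a}u)\in L^{p}_{\omega}$ for the (polynomially growing) weights at hand, which is exactly what is needed to invoke the Fefferman--Stein inequality for $T_{a}u$, $T^{*}_{a}u$; the general estimate follows by density. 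I shall also use the elementary facts $|v|\le M_{\epsilon}v$ a.e.\ and $M^{\sharp}_{\epsilon}v\le M^{\sharp}v$ for $0<\epsilon\le1$ (the latter from $|a^{\epsilon}-b^{\epsilon}|\le|a-b|^{\epsilon}$ and Jensen's inequality), so that the strong-type $M^{\sharp}$-estimates of Theorem~\ref{TH2} can be fed into the Fefferman--Stein inequality through a small $\epsilon$.

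\textbf{Off-diagonal range $1<r\le2$, $r<p<\infty$, $\omega\in A_{p/r}$.} Applying Theorem~\ref{TH2} with its free exponent equal to $r$ gives $M^{\sharp}(T_{a}u)\lesssim M_{r}u$ when $a\in S^{-\frac{n}{r}(1-\varrho)}_{\varrho,\delta}$, and $M^{\sharp}(T^{*}_{a}u)\lesssim M_{r}u$ when $a\in S^{-\frac{n}{r}(1-\varrho)-\frac{n}{2}\max\{\delta-\varrho,0\}}_{\varrho,\delta}$. Hence, fixing any $0<\epsilon<1$,
\[
\|T_{a}u\|_{L^{p}_{\omega}}\le\|M_{\epsilon}(T_{a}u)\|_{L^{p}_{\omega}}\lesssim\|M^{\sharp}_{\epsilon}(T_{a}u)\|_{L^{p}_{\omega}}\le\|M^{\sharp}(T_{a}u)\|_{L^{p}_{\omega}}\lesssim\|M_{r}u\|_{L^{p}_{\omega}},
\]
and since $\|M_{r}u\|_{L^{p}_{\omega}}^{p}=\|M(|u|^{r})\|_{L^{p/r}_{\omega}}^{p/r}$ with $p/r>1$ and $\omega\in A_{p/r}$, the weighted maximal inequality yields $\|M_{r}u\|_{L^{p}_{\omega}}\lesssim\|u\|_{L^{p}_{\omega}}$. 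The same chain of inequalities treats $T^{*}_{a}$.

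\textbf{The case $r=1$, $1<p<\infty$, $\omega\in A_{p}$.} For $\varrho>0$ one uses Theorem~\ref{TH3} (and Theorem~\ref{TH5} for the rough-symbol version of the $T^{*}_{a}$-bound) directly: $M^{\sharp}_{\epsilon}(T_{a}u),M^{\sharp}_{\epsilon}(T^{*}_{a}u)\lesssim Mu$, whence $\|T_{a}u\|_{L^{p}_{\omega}}\le\|M_{\epsilon}(T_{a}u)\|_{L^{p}_{\omega}}\lesssim\|M^{\sharp}_{\epsilon}(T_{a}u)\|_{L^{p}_{\omega}}\lesssim\|Mu\|_{L^{p}_{\omega}}\lesssim\|u\|_{L^{p}_{\omega}}$ by the $A_{p}$ boundedness of $M$. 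For $\varrho=0$, since $S^{-n(1-\varrho)}_{\varrho,\delta}\subset S^{-n(1-\varrho)/q}_{\varrho,\delta}$ for every $q\ge1$, one instead applies Theorem~\ref{TH2} with an exponent $q\in(1,2]$ chosen --- by the openness of the $A_{p}$ scale, $\omega\in A_{p-\eta}$ for some $\eta>0$ --- so small that $\omega\in A_{p/q}$ still holds, and then concludes as in the off-diagonal case with $r$ replaced by $q$. This reproves Theorem~\ref{W} and also yields its $T^{*}_{a}$-analogue on the open range.

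\textbf{Diagonal endpoint $p=r\in(1,2]$, $\omega\in A_{1}$} --- the delicate point, and the step I expect to be the main obstacle. The preceding scheme does not reach it: when $\omega\in A_{1}$ the dominating operator $M_{r}$ is \emph{not} bounded on $L^{r}_{\omega}$ (already $\omega\equiv1$ fails), so one cannot simply set $p=r$, and the weak-type Fefferman--Stein inequality gives only $T_{a},T^{*}_{a}\colon L^{r}_{\omega}\to L^{r,\infty}_{\omega}$. To upgrade this to the strong-type $(r,r)$ inequality I would argue by interpolation with change of measure (\cite[Theorem 5.5.3]{Bergh}), combining the unweighted $L^{r}$ boundedness of $T_{a}$ and $T^{*}_{a}$ --- which follows from the unweighted $L^{p}$ bounds recalled above (Theorem~\ref{Tz}) at $p=r$, using only $\delta<1$ --- with the off-diagonal estimates already proved, valid for the weight $\omega$ itself at every exponent $>r$ since $A_{1}\subset A_{p/r}$, and with the self-improvement $\omega^{1+\eta}\in A_{1}$ for some $\eta>0$ coming from the reverse H\"{o}lder inequality. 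Arranging this interpolation so that the interpolated space is exactly $L^{r}_{\omega}$ is the crux; the $M_{r}$-obstruction above shows the sharp-function method alone cannot circumvent it. Granting this endpoint, all four estimates of Theorem~\ref{weight2} follow, the a priori finiteness required by Fefferman--Stein being supplied by the density reduction of the first step.
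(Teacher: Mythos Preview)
Your approach is essentially the paper's: the paper's own ``proof'' of Theorem~\ref{weight2} is the single sentence preceding its statement, which cites exactly the ingredients you use --- the pointwise sharp-maximal estimates of Theorems~\ref{TH2}, \ref{TH3}, \ref{TH5}, the Fefferman--Stein inequalities $\|M_{\epsilon}u\|_{L^{p}_{\omega}}\lesssim\|M^{\sharp}_{\epsilon}u\|_{L^{p}_{\omega}}$ and their weak-type analogues, and interpolation with change of measure \cite[Theorem 5.5.3]{Bergh}. Your treatment of the off-diagonal range $r<p$ and of the case $r=1$ (including the $\varrho=0$ detour through the openness of $A_p$, which is needed because Theorem~\ref{TH3} requires $\varrho>0$) is correct and somewhat more explicit than what the paper writes.

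One caveat you have already flagged yourself deserves emphasis. At the diagonal endpoint $p=r\in(1,2]$, $\omega\in A_1$, the ingredients you list --- unweighted $L^{r}$ boundedness, the off-diagonal bounds $L^{p_1}_{\omega^{1+\eta}}\to L^{p_1}_{\omega^{1+\eta}}$ for $p_1>r$, and the complex interpolation of \cite[Theorem 5.5.3]{Bergh} --- cannot by themselves produce $L^{r}_{\omega}\to L^{r}_{\omega}$: with $p_0=r$ and $p_1>r$ the interpolated exponent $p_\theta$ satisfies $1/p_\theta=(1-\theta)/r+\theta/p_1$, so $p_\theta=r$ forces $\theta=0$. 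The weak-type bound $T\colon L^{r}_{\omega}\to L^{r,\infty}_{\omega}$ you obtain from Fefferman--Stein plus the weak $(1,1)$ boundedness of $M$ on $A_1$ weights is the natural substitute, but upgrading it to strong type needs a further argument (e.g.\ a real-interpolation-with-change-of-measure or extrapolation step) that neither your sketch nor the paper's one-line proof spells out. This is a shared lacuna rather than a divergence between your approach and the paper's.
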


\begin{thm}\label{weight3}
Let $0<\varrho\leq1$, $0\leq\delta<1$. For any $1<p<\infty$ and $\omega\in A_{p}$, if $a\in S^{-n(1-\varrho)}_{\varrho,\delta}$, then
\begin{equation*}\label{weighted3}
\|T_{a}u\|_{L^{p}_{\omega}}\leq C\|u\|_{L^{p}_{\omega}}~and~\|T^{*}_{a}u\|_{L^{p}_{\omega}}\leq C\|u\|_{L^{p}_{\omega}}.
\end{equation*}
For $p=1$ and $\omega\in A_{1}$, if $a\in S^{-n(1-\varrho)}_{\varrho,\delta}$, then
\begin{equation*}\label{weighted3}
\|T_{a}u\|_{L^{1,\infty}_{\omega}}\leq C\|u\|_{L^{1}_{\omega}}~and~\|T^{*}_{a}u\|_{L^{1,\infty}_{\omega}}\leq C\|u\|_{L^{1}_{\omega}}.
\end{equation*}
\end{thm}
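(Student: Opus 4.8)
The plan is to derive the weighted inequalities from the pointwise sharp maximal estimates of Theorem \ref{TH3}, the Fefferman--Stein inequality recalled above, and the classical weighted estimates for the Hardy--Littlewood maximal operator $M$: its $L^p_\omega$-boundedness for $\omega\in A_p$, $1<p<\infty$, and its weak-type $(1,1)$ bound for $\omega\in A_1$.

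Fix a small $0<\epsilon<1$ and first take $f\in\mathcal{S}(\mathbb{R}^n)$. By the standard mapping properties of pseudo-differential operators with symbols in $S^m_{\varrho,\delta}$, $\delta<1$, both $T_af$ and $T^*_af$ are Schwartz functions; in particular they lie in $L^1_{loc}$, and, choosing $\epsilon$ small enough relative to the doubling exponent of $\omega$, one gets $M_\epsilon(T_af),M_\epsilon(T^*_af)\in L^p_\omega$, so the a priori requirements for the Fefferman--Stein inequality are met. For $1<p<\infty$ and $\omega\in A_p$, the Lebesgue differentiation theorem gives $|T_af|\le M_\epsilon(T_af)$ a.e., whence
\begin{equation*}
\|T_af\|_{L^p_\omega}\le\|M_\epsilon(T_af)\|_{L^p_\omega}\lesssim\|M^\sharp_\epsilon(T_af)\|_{L^p_\omega}\lesssim\|Mf\|_{L^p_\omega}\lesssim\|f\|_{L^p_\omega},
\end{equation*}
where the second inequality is Fefferman--Stein (valid since $A_p\subset A_\infty$), the third is the first estimate of Theorem \ref{TH3}, and the last is the $A_p$-boundedness of $M$. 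The same computation with the second estimate of Theorem \ref{TH3} gives $\|T^*_af\|_{L^p_\omega}\lesssim\|f\|_{L^p_\omega}$. Since $\mathcal{S}(\mathbb{R}^n)$ is dense in $L^p_\omega$, both $T_a$ and $T^*_a$ extend to bounded operators on $L^p_\omega$, yielding the first display of the theorem.

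For $p=1$ and $\omega\in A_1$ one runs the same chain with the weak-type Fefferman--Stein inequality: for $f\in\mathcal{S}(\mathbb{R}^n)$,
\begin{equation*}
\|T_af\|_{L^{1,\infty}_\omega}\le\|M_\epsilon(T_af)\|_{L^{1,\infty}_\omega}\lesssim\|M^\sharp_\epsilon(T_af)\|_{L^{1,\infty}_\omega}\lesssim\|Mf\|_{L^{1,\infty}_\omega}\lesssim\|f\|_{L^1_\omega},
\end{equation*}
the last step being the weighted weak-type $(1,1)$ bound for $M$ with $\omega\in A_1$ (again $A_1\subset A_\infty$); likewise for $T^*_af$. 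A standard limiting argument---weak-type bounds survive $L^1_\omega$-convergence of the input---extends these to all $f\in L^1_\omega$.

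The only point that really needs care is the a priori finiteness demanded by the Fefferman--Stein inequality and the meaning of $T_af$, $T^*_af$ for non-smooth $f$. Both are dealt with as above, by working first with Schwartz $f$ (where the rapid decay of $T_af$ and a small choice of $\epsilon$ force $M_\epsilon(T_af)\in L^p_\omega$) and then extending by density; alternatively one may apply Fefferman--Stein to the truncations $\min(|T_af|,N)\chi_{B(0,R)}$, use that their sharp maximal functions are controlled by $M^\sharp_\epsilon(T_af)$, and let $N,R\to\infty$ via monotone convergence. For the density step one may also invoke the unweighted $L^2$-boundedness of $T_a$ and $T^*_a$, which follows from Theorem \ref{Tz} because $-n(1-\varrho)\le-\tfrac n2\max\{\delta-\varrho,0\}$. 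No interpolation is involved; the theorem is just a corollary of the pointwise estimate in Theorem \ref{TH3}.
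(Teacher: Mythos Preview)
Your proof is correct and follows essentially the same approach as the paper. The paper does not give a separate proof of this theorem; it simply states, just before listing Theorems \ref{weight2}--\ref{weight1}, that these follow from the pointwise estimates (Theorems \ref{TH2}, \ref{TH3}, \ref{TH5}) via the Fefferman--Stein inequalities and the weighted bounds for $M$, which is exactly the chain you wrote out. Your added care about a priori finiteness and density is welcome detail that the paper omits, and your closing remark that no interpolation is needed here is accurate: the reference to interpolation in the paper pertains to the other corollaries, not to this one.
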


\begin{thm}\label{weight1}
Let $0<\varrho<1$. For any $1<p<\infty$ and $\omega\in A_{p}$, if $a\in L^{\infty}S^{-n(1-\varrho)}_{\varrho}$, then
\begin{equation*}\label{weighted1}
\|T^{*}_{a}u\|_{L^{p}_{\omega}}\leq C\|u\|_{L^{p}_{\omega}}.
\end{equation*}
For $p=1$ and $\omega\in A_{1}$, if $a\in L^{\infty}S^{-n(1-\varrho)}_{\varrho}$, then
\begin{equation*}\label{weighted1}
\|T^{*}_{a}u\|_{L^{1,\infty}_{\omega}}\leq C\|u\|_{L^{1}_{\omega}}.
\end{equation*}
\end{thm}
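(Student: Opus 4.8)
The plan is to deduce both estimates from the pointwise bound $M^{\sharp}(T^{*}_{a}f)(x)\lesssim Mf(x)$ of Theorem \ref{TH5}, combined with the Fefferman--Stein inequalities quoted above and the $A_{p}$ (resp. weak $A_{1}$) boundedness of the Hardy--Littlewood maximal operator $M$. The first step is to record the elementary pointwise comparison $M^{\sharp}_{\epsilon}g(x)\lesssim M^{\sharp}g(x)$ for $0<\epsilon\leq1$: since $\big||g(y)|^{\epsilon}-|c|^{\epsilon}\big|\leq|g(y)-c|^{\epsilon}$ for every $c\in\mathbb{C}$, Jensen's inequality (concavity of $t\mapsto t^{\epsilon}$) gives $\frac{1}{|Q|}\int_{Q}\big||g|^{\epsilon}-|c|^{\epsilon}\big|\,dy\leq\big(\frac{1}{|Q|}\int_{Q}|g-c|\,dy\big)^{\epsilon}$, and taking the infimum over $c$ and the supremum over cubes $Q\ni x$ yields $M^{\sharp}(|g|^{\epsilon})(x)\leq(M^{\sharp}g(x))^{\epsilon}$, i.e. $M^{\sharp}_{\epsilon}g(x)\leq M^{\sharp}g(x)$. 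Fixing $0<\epsilon<1$ and applying this with $g=T^{*}_{a}f$ together with Theorem \ref{TH5} gives $M^{\sharp}_{\epsilon}(T^{*}_{a}f)(x)\lesssim M^{\sharp}(T^{*}_{a}f)(x)\lesssim Mf(x)$.

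For $1<p<\infty$ and $\omega\in A_{p}\subset A_{\infty}$, I would then chain together $|T^{*}_{a}f(x)|\leq M_{\epsilon}(T^{*}_{a}f)(x)$ a.e., the Fefferman--Stein inequality $\|M_{\epsilon}g\|_{L^{p}_{\omega}}\lesssim\|M^{\sharp}_{\epsilon}g\|_{L^{p}_{\omega}}$, the bound from the previous paragraph, and finally the boundedness of $M$ on $L^{p}_{\omega}$ valid since $\omega\in A_{p}$:
$$\|T^{*}_{a}f\|_{L^{p}_{\omega}}\leq\|M_{\epsilon}(T^{*}_{a}f)\|_{L^{p}_{\omega}}\lesssim\|M^{\sharp}_{\epsilon}(T^{*}_{a}f)\|_{L^{p}_{\omega}}\lesssim\|Mf\|_{L^{p}_{\omega}}\lesssim\|f\|_{L^{p}_{\omega}}.$$
For $p=1$ and $\omega\in A_{1}$ the identical chain works after replacing the strong Fefferman--Stein inequality by the weak-type one $\|M_{\epsilon}g\|_{L^{1,\infty}_{\omega}}\lesssim\|M^{\sharp}_{\epsilon}g\|_{L^{1,\infty}_{\omega}}$ and the strong bound on $M$ by its weak $(1,1)$ bound $\|Mf\|_{L^{1,\infty}_{\omega}}\lesssim\|f\|_{L^{1}_{\omega}}$ for $A_{1}$ weights, giving $\|T^{*}_{a}f\|_{L^{1,\infty}_{\omega}}\lesssim\|f\|_{L^{1}_{\omega}}$.

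The one point requiring care is the a priori hypothesis under which the Fefferman--Stein inequality may be legitimately applied, namely that $M_{\epsilon}(T^{*}_{a}f)$ (or at least $M_{\delta}(T^{*}_{a}f)$ for some small $\delta$) be finite in the relevant norm before the argument begins. I expect to dispose of this in the standard way: first establish the estimates for $f$ bounded with compact support, using a truncation so that $T^{*}_{a}f$ lies in some unweighted $L^{q}$ and the Fefferman--Stein machinery is applicable, and only afterwards extend to general $f\in L^{p}_{\omega}$ (resp. $L^{1}_{\omega}$) by density. Apart from this technical verification, the proof is a bookkeeping composition of already-established inequalities, and I do not anticipate any further obstacle.
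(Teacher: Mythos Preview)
Your proposal is correct and follows essentially the same route as the paper: the paper simply states that Theorem~\ref{weight1} follows from the pointwise estimate of Theorem~\ref{TH5} together with the Fefferman--Stein inequalities and the weighted mapping properties of $M$, without spelling out any of the intermediate steps. Your write-up supplies the details the paper omits (the elementary comparison $M^{\sharp}_{\epsilon}\le M^{\sharp}$ and the a~priori finiteness via density), which is a welcome addition rather than a departure.
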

The main contribution of these theorems, besides getting the weighted $L^{p}$ boundedness of $T^{*}_{a}$, is extending the range of $\varrho,\delta$ to general case. Especially, the case $p=r=1$ is considered as well. Here, we would like to highlight potential directions for further research, such as extending the study from $L^{p}$ spaces to Morrey spaces. For progress on Calder\'{o}n-Zygmund operators (a class of PDOs) in Morrey spaces, we refer the reader to \cite{Ragusa1, Ragusa2} and the references therein.

Another topic of this paper is to investigate some properties of pseudo-differential operator $T_{a}$ and its dual operators $T^{*}_{a}$ acting on Hardy spaces $H^{p}(\mathbb{R}^{n})$, where $0<p\leq1$. The first property is $(H^{p},H^{p})$ continuity, which can go back to \'{A}lvarez and Milman \cite{Alvarez1,Alvarez2}. They introduce strongly singular Calder\'{o}n-Zygmund operators $T$ and prove the operators $T$ satisfying $T^{*}(1)=0$ acts continuously on $H^{p}(\mathbb{R}^{n})$ for $p_{0}<p\leq1$. As an application, they point out that the pseudo-differential operators $T_{a}$ with symbols in $S^{-\frac{n}{2}(1-\varrho)}_{\varrho,\delta}$ are included in strongly singular Calder\'{o}n-Zygmund operators, where $0<\delta\leq\varrho<1$.
Later, \'{A}lvarez and Hounie \cite{Hounie} extend the range of $\varrho$ and $\delta$ to more general case, that is, $0<\varrho\leq1$ and $0\leq\delta<1$, but $a\in S^{-\frac{n}{2}(1-\varrho)-\frac{n}{2}\max\{\delta-\varrho,0\}}_{\varrho,\delta}$.

\begin{thm}[\'{A}lvarez and Hounie \cite{Hounie}]\label{B2}
Let $0<\varrho\leq1,$ $0\leq\delta<1$ and $a(x,\xi)\in S^{m}_{\varrho,\delta}$.
If $$m\leq -\frac{n}{2}(1-\varrho)-\frac{n}{2}\max\{\delta-\varrho,0\}$$
and $T^{*}_{a}(1)=0$ in the sense of $BMO$.
Then $T_{a}$ maps continuously $H^{p}$ into itself for $p_{0}<p\leq1$ where $\frac{1}{p_{0}}=\frac{1}{2}+\frac{\frac{n}{2}(1-\varrho)(1/\varrho+n/2)}{n(1/\varrho-1+\frac{n}{2}(1-\varrho))}$
\end{thm}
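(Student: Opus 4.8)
The plan is to realize $T_a$ as a strongly singular Calder\'{o}n--Zygmund operator in the sense of \'{A}lvarez and Milman \cite{Alvarez1} and then run the standard atomic argument on $H^p$. Concretely, it is enough to produce a constant $C$, independent of the atom, such that $\|T_ab\|_{H^p}\le C$ for every $L^2$-atom $b$ of $H^p$, i.e. every $b$ supported in a ball $B=B(x_0,r)$ with $\|b\|_{L^2}\le|B|^{1/2-1/p}$ and $\int b=0$. Since $T_a$ is already bounded on $L^2$ --- because $m\le-\tfrac{n}{2}\max\{\delta-\varrho,0\}$ (here $\delta<1$ is used) is the classical Calder\'{o}n--Vaillancourt/H\"{o}rmander threshold --- a routine density argument then promotes this uniform bound to boundedness of $T_a$ on all of $H^p$. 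Note also that the output has vanishing integral: pairing with $1$ gives $\int T_ab=\langle b,T^*_a1\rangle=0$ by the hypothesis $T^*_a(1)=0$ in $BMO$, and this cancellation is the only structural assumption that enters.

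The technical core is a family of kernel estimates. Decompose $a=\sum_{j\ge0}a_j$ with $a_j$ a Littlewood--Paley piece localized to $\langle\xi\rangle\sim2^j$, and let $K_j$ be the kernel of $T_{a_j}$. A standard integration by parts in $\xi$, gaining $2^{-j\varrho}$ per $\xi$-derivative, gives for every $N$
\begin{equation*}
|K_j(x,y)|\lesssim 2^{j(n+m)}\bigl(1+2^{j\varrho}|x-y|\bigr)^{-N},\qquad |x-y|\le1,
\end{equation*}
with an extra factor $2^{j\varrho}$ in the analogous bound for $\nabla_yK_j$, and with bounds decaying rapidly in both $2^j$ and $|x-y|$ when $|x-y|\ge1$ (this uses $\varrho>0$). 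Summing over $j$ --- separating the ranges $2^{-j\varrho}\le|x-y|$ and $2^{-j\varrho}>|x-y|$, and optimizing over $N$ --- one obtains a size bound $|K(x,y)|\lesssim|x-y|^{-n-\theta}$ on $0<|x-y|\le1$ for an appropriate $\theta=\theta(n,\varrho,\delta)>0$, together with the strongly singular regularity inequality
\begin{equation*}
\int_{|x-x_0|\ge c|x_0-y|^{\varrho}}\bigl|K(x,y)-K(x,x_0)\bigr|\,dx\lesssim1,\qquad |x_0-y|\le1 .
\end{equation*}
The enlargement exponent $\varrho<1$ (and the fact that $\theta$ is finite) is what distinguishes $T_a$ from an ordinary Calder\'{o}n--Zygmund operator, and the order $m=-\tfrac{n}{2}(1-\varrho)-\tfrac{n}{2}\max\{\delta-\varrho,0\}$ is precisely the value at which these sums converge with enlargement exponent $\varrho$.

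With the kernel estimates in hand, one shows that, up to a fixed multiplicative constant, $T_ab$ is an $H^p$-molecule centered at $x_0$. When $r\ge1$ this is immediate from $\|T_ab\|_{L^2}\le\|b\|_{L^2}$ and the rapid off-diagonal decay of $K$ for $|x-y|\ge1$. When $r<1$ --- the essential case --- one splits $\mathbb{R}^n$ into the dilated ball $B^*=B(x_0,r^{\varrho})$ and its complement: on $B^*$ one uses Cauchy--Schwarz together with $\|T_ab\|_{L^2}\le\|b\|_{L^2}\le r^{n/2-n/p}$ and $|B^*|\sim r^{n\varrho}$; off $B^*$ one uses $\int b=0$ and the regularity inequality above to extract integrable decay in $|x-x_0|$. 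Combining these estimates with the mean-zero property $\int T_ab=0$ established above shows that $c\,T_ab$ meets the size, decay and moment conditions of an $H^p$-molecule provided $p>p_0$; here $p_0$ is exactly the exponent at which the weight $r^{n\varrho}$ coming from $|B^*|$ balances the molecular decay rate, and performing that optimization produces the stated value of $1/p_0$. Since $H^p$-molecules belong to $H^p$ with uniformly bounded norm, this yields the desired atomic estimate and hence the theorem.

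The main obstacle is the passage from $p=1$ to general $p\in(p_0,1]$: for $p=1$ the \'{A}lvarez--Milman machinery applies almost verbatim once the kernel conditions are verified, whereas for $p<1$ one must check the full molecular decay/moment conditions, and it is precisely in balancing the power $r^{n\varrho}$ against the molecular decay exponent (which itself depends on $\theta$ and $\varrho$) that the restriction $p>p_0$ is forced. A secondary difficulty, purely computational, is the bookkeeping in the kernel estimates: obtaining the small enlargement exponent $\varrho$ --- rather than something larger --- in the regularity inequality is what makes $p_0$ as small as the displayed formula.
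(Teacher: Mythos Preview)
This theorem is quoted from \'{A}lvarez--Hounie \cite{Hounie} and is not proved in the present paper; the paper only sketches the original approach in one paragraph, namely: use the atomic/molecular characterization of $H^p$, show that the image $T_a a_Q$ of a $(p,2)$ atom is a molecule, and invoke $T^*_a(1)=0$ solely to supply the molecule's vanishing moment $\int T_a a_Q=0$, with the restriction $p>p_0$ arising from the molecular size estimates. Your proposal follows exactly this route --- strongly singular kernel estimates obtained from a dyadic decomposition in $\xi$, the enlargement exponent $\varrho$ in the H\"{o}rmander-type regularity condition, and the molecular bound via the split $B^*=B(x_0,r^\varrho)$ versus its complement --- so it is both correct in outline and essentially the same argument the paper attributes to \cite{Hounie}.
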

The approach to prove this theorem is applying the atomic and molecular characterization of $H^{p}(\mathbb{R}^{m})$. The advantage of this approach is that one only needs to show that $T_{a}a_{Q}$, the image of a $(p,2)$ atom $a_{Q}$, is a suitable molecule. The condition that $T^{*}_{a}(1)=0$ is used only to provide the cancellation condition of the molecule, that is, $\int_{\mathbb{R}^{n}}T_{a}a_{Q}(x)dx=0$, at cost of restricting the range of $p$ into $p_{0}<p\leq1$. So, the higher degree of cancellation, namely,
\begin{eqnarray}\label{c}
T^{*}_{a}(x^{\alpha})=0,\quad for\quad|\alpha|\leq[n(\frac{1}{p}-1)],
\end{eqnarray}
is required to extend for $p$ below $p_{0}$. Here and below, $[x]$ indicates the integer part of $x$. See \cite{Frazier,Hart,Torres} for the case of Calder\'{o}n-Zygmund operators.
Notice that (\ref{c}) is used only to provide $\int_{\mathbb{R}^{n}}x^{\alpha}T_{a}a_{Q}(x)dx=0$ for $|\alpha|\leq[n(\frac{1}{p}-1)]$.
So, we use the following condition instead of (\ref{c}) in this paper:
\begin{defn}
Let $0<p\leq1$, $t\in \mathbb{N}^{+}\cup\{0\}$, $T$ be a operator and $L^{2}_{c,t}(\mathbb{R}^{n})$ denote the set of functions in $L^{2}_{c}(\mathbb{R}^{n})$ such that$\int_{\mathbb{R}^{n}}x^{\beta}f(x)dx=0$ for $|\beta|\leq t$. If $f\in L^{2}_{c,t}(\mathbb{R}^{n})$, then
\begin{eqnarray}\label{cb}
\int_{\mathbb{R}^{n}}x^{\alpha}Tf(x)dx=0,\quad for\quad|\alpha|\leq[n(\frac{1}{p}-1)].
\end{eqnarray}
Here, $L^{2}_{c}(\mathbb{R}^{n})$ denotes the set of functions in $L^{2}(\mathbb{R}^{n})$  with compact
support.
\end{defn}
As we known, for the atomic decomposition of an element of $H^{p}(\mathbb{R}^{n})$, one can always choose $(p,2)$ atoms with an number of additional vanishing moments that is known as $(p,2,t)$ atoms with $t\geq[n(\frac{1}{p}-1)]$ (see\cite{Stein}). Clearly, if $f$ is a $(p,2,t)$ atom, then $f\in L^{2}_{c,t}(\mathbb{R}^{n})$ with $t\geq[n(\frac{1}{p}-1)]$. Moreover, the proof of Proposition \ref{P2} below implies that (\ref{cb}) for both $T_{a}$ and $T^{*}_{a}$ is well defined, where the symbol $a$ is given in Theorem \ref{TH6}.

%\begin{prop}
%Let $0<p<1$, $0\leq\varrho\leq1$ and $0\leq\delta<1$.
%\begin{enumerate}
%  \item If $T^{*}_{a}$ defined as (\ref{df*}) with $a\in S^{-n(1-\varrho)(\frac{1}{p}-\frac{1}{2})}_{\varrho,\delta}$. Then $x^{\alpha}T^{*}_{a}f(x)$ with $|\alpha|\leq[n(\frac{1}{p}-1)]$ belongs to $L^{1}(\mathbb{R}^{n})$ for all $f\in L^{2}_{c,t}(\mathbb{R}^{n})$ with $t\geq[n(\frac{1}{p}-1)]$.
%  \item If $T_{a}$ defined as (\ref{df}) with $a\in S^{-n(1-\varrho)(\frac{1}{p}-\frac{1}{2})-\frac{n}{2}\max(0,\delta-\varrho)}_{\varrho,\delta}$. Then $x^{\alpha}T_{a}f(x)$ with $|\alpha|\leq[n(\frac{1}{p}-1)]$ belongs to $L^{1}(\mathbb{R}^{n})$ for all $f\in L^{2}_{c,t}(\mathbb{R}^{n})$ with $t\geq[n(\frac{1}{p}-1)]$.
%\end{enumerate}
%\end{prop}
\begin{thm}\label{TH6}
Let $0<p<1$, $0\leq\varrho\leq1$ and $0\leq\delta<1$.
\begin{enumerate}
  \item If $T^{*}_{a}$ defined as (\ref{df*}) satisfies condition (\ref{cb}) and $a\in S^{-n(1-\varrho)(\frac{1}{p}-\frac{1}{2})}_{\varrho,\delta}$. Then the operator $T^{*}_{a}$ is bounded on $H^{p}(\mathbb{R}^{n})$.
  \item If $T_{a}$ defined as (\ref{df}) satisfies condition (\ref{cb}) and $a\in S^{-n(1-\varrho)(\frac{1}{p}-\frac{1}{2})-\frac{n}{2}\max(0,\delta-\varrho)}_{\varrho,\delta}$. Then the operator $T_{a}$ is bounded on $H^{p}(\mathbb{R}^{n})$.
\end{enumerate}
\end{thm}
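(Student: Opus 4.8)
The plan is to combine the atomic decomposition of $H^{p}$ with the molecular characterization of that space, in the spirit of \cite{Hounie}: I will show that $T_{a}$ (resp.\ $T^{*}_{a}$) maps every $(p,2)$-atom to a fixed multiple of an $L^{2}$-molecule whose molecular norm is controlled independently of the atom. Since any $f\in H^{p}(\mathbb{R}^{n})$, $0<p<1$, can be written as $f=\sum_{j}\lambda_{j}a_{j}$ with $\sum_{j}|\lambda_{j}|^{p}\lesssim\|f\|_{H^{p}}^{p}$, each $a_{j}$ a $(p,2,t)$-atom for the fixed integer $t=[n(\tfrac1p-1)]$, and since $T_{a}$ and $T^{*}_{a}$ are bounded on $L^{2}$ (in both parts of the theorem the order does not exceed the sharp $L^{2}$-threshold $-\tfrac{n}{2}\max\{\delta-\varrho,0\}$ recalled in the Introduction), the subadditivity of $\|\cdot\|_{H^{p}}^{p}$ reduces the theorem to the uniform estimate $\|T_{a}a_{Q}\|_{H^{p}}\lesssim1$ over all $(p,2,t)$-atoms $a_{Q}$, and likewise for $T^{*}_{a}$. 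Fix such an atom supported in a cube $Q=Q(x_{0},r)$, which after translation we center at the origin; note that $a_{Q}\in L^{2}_{c,t}(\mathbb{R}^{n})$ with $t\geq[n(\tfrac1p-1)]$, so hypothesis (\ref{cb}) applies to it. By the molecular characterization it then suffices to find $\varepsilon>\tfrac1p-1$ (the precise admissible range being fixed by the decay estimate below), with the corresponding $\theta\in(0,1)$, so that $g:=T_{a}a_{Q}$ (resp.\ $g:=T^{*}_{a}a_{Q}$) satisfies $\|g\|_{L^{2}}\lesssim r^{n/2-n/p}$, $\||\cdot|^{n\varepsilon}g\|_{L^{2}}\lesssim r^{n\varepsilon+n/2-n/p}$, and $\int_{\mathbb{R}^{n}}x^{\alpha}g(x)\,dx=0$ for $|\alpha|\leq[n(\tfrac1p-1)]$; these three facts make the molecular norm $\|g\|^{1-\theta}_{L^{2}}\,\||\cdot|^{n\varepsilon}g\|^{\theta}_{L^{2}}$ comparable to $1$, hence $g\in H^{p}$ with $\|g\|_{H^{p}}\lesssim1$.

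The first estimate is immediate from the $L^{2}$-boundedness of $T_{a}$ (resp.\ $T^{*}_{a}$) and the normalization $\|a_{Q}\|_{L^{2}}\leq|Q|^{1/2-1/p}\sim r^{n/2-n/p}$. The moment condition is exactly hypothesis (\ref{cb}), which is legitimate since $a_{Q}\in L^{2}_{c,t}$ with $t\geq[n(\tfrac1p-1)]$; the absolute convergence of the integrals $\int x^{\alpha}g(x)\,dx$ --- i.e.\ that (\ref{cb}) is meaningful here --- is itself a byproduct of the decay estimate below, which is the content of Proposition \ref{P2}.

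The remaining, and principal, point is the weighted bound $\||\cdot|^{n\varepsilon}g\|_{L^{2}}\lesssim r^{n\varepsilon+n/2-n/p}$. I would split $\mathbb{R}^{n}=2Q\cup\bigcup_{k\geq1}\bigl(2^{k+1}Q\setminus 2^{k}Q\bigr)$: on $2Q$ one has $|x|\lesssim r$, so the first estimate suffices there, and it remains to prove $\|g\|_{L^{2}(2^{k+1}Q\setminus 2^{k}Q)}\lesssim 2^{-k\sigma}\,r^{n/2-n/p}$ for some $\sigma>n\varepsilon$. For $x\notin 2Q$ one has $g(x)=\int_{Q}K(x,y)\,a_{Q}(y)\,dy$ with $K$ the kernel of $T_{a}$ (resp.\ $T^{*}_{a}$); subtracting from $y\mapsto K(x,y)$ its Taylor polynomial of degree $t$ at the origin --- permissible by the vanishing moments of $a_{Q}$ --- gives $|g(x)|\lesssim r^{t+1}\,\|a_{Q}\|_{L^{1}}\,\max_{|\gamma|=t+1}\sup_{|z|\leq r}\bigl|\partial^{\gamma}_{y}K(x,z)\bigr|$, where $\|a_{Q}\|_{L^{1}}\lesssim r^{n-n/p}$. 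One then inserts the standard kernel estimates for pseudo-differential operators: away from the diagonal the kernel and its $y$-derivatives --- the latter being, up to constants, kernels of symbols of order at most $m+|\gamma|$ --- decay faster than any power of $|x-y|$ once $|x-y|\gtrsim 1$, while for $|x-y|\lesssim 1$ they satisfy $\bigl|\partial^{\gamma}_{y}K(x,y)\bigr|\lesssim|x-y|^{-(n+m+|\gamma|)/\varrho}$ (with the customary logarithmic or bounded corrections at the borderline, and a separate treatment when $\varrho=0$). Summing the two regimes over the annuli, with $t$ taken large enough and $\varepsilon$ in the range the computation permits, yields the geometric decay in $k$. The hard part is precisely this: obtaining sharp enough off-diagonal bounds for the differentiated operators and coupling the order $m$, the number $t$ of vanishing moments, the regularity $\varrho$ and the molecular exponent $\varepsilon$ so that the series over $k$ converges; the gap $\tfrac{n}{2}\max\{0,\delta-\varrho\}$ between the two admissible orders reflects the different kernel estimates available for $T_{a}$ and for $T^{*}_{a}$, in parallel with the asymmetry between these operators already recorded for their $L^{p}$ and $(L^{\infty},\mathrm{BMO})$ continuities.
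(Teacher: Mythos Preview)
Your overall strategy --- atomic decomposition in, molecular characterization out, with condition (\ref{cb}) supplying the vanishing moments of the image --- is exactly the one the paper follows (via Proposition \ref{P2}). The gap is in how you propose to establish the weighted estimate $\||\cdot|^{n\varepsilon}g\|_{L^{2}}\lesssim r^{n\varepsilon+n/2-n/p}$ for small cubes when $\varrho<1$.

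Your Taylor-plus-pointwise-kernel argument gives, for $2r\le|x|\le1$,
\[
|g(x)|\ \lesssim\ r^{\,t+1}\,r^{\,n-n/p}\,|x|^{-(n+m+t+1)/\varrho},
\]
because $\partial^{\gamma}_{y}K$ (or $\partial^{\gamma}_{y}K^{*}$) is the kernel of a symbol of order at most $m+|\gamma|$ and the standard near-diagonal bound carries the exponent $1/\varrho$. Inserting this into $\int_{2r\le|x|\le1}|x|^{2n\varepsilon}|g(x)|^{2}\,dx$ and comparing powers of $r$ forces
\[
\varrho\,(t+1+n)\ \ge\ n+m+t+1,\qquad\text{i.e.}\qquad m\le-(1-\varrho)(t+1+n).
\]
With $m=-n(1-\varrho)(\tfrac1p-\tfrac12)$ and $\varrho<1$ this reads $n(\tfrac1p-\tfrac12)\ge t+1+n$, which already fails for the minimal $t=[n(\tfrac1p-1)]$ and gets worse as $t$ grows (each unit of $t$ adds $\varrho$ on the left and $1$ on the right). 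So the pointwise route cannot close, and the phrase ``with $t$ taken large enough'' cannot rescue it --- it is in any case inconsistent with your earlier fixing of $t=[n(\tfrac1p-1)]$. The difficulty is structural: the $1/\varrho$ in the kernel bound means each Taylor order buys a factor $r$ but costs $|x|^{-1/\varrho}$ on the annulus $|x|\sim r$, a net loss whenever $\varrho<1$. (For $\varrho=1$ your argument would go through; that is the classical Calder\'on--Zygmund situation.)

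The paper avoids this by working in frequency rather than in space: it Littlewood--Paley decomposes $T^{*}_{a}=\sum_{j}T^{*}_{j}$, takes $(p,2,2t)$-atoms with $t>n/p$ large, and shows that $T^{*}_{a}a_{Q}$ is a $(p,1,[n(\tfrac1p-1)],\tfrac{t}{n}-\tfrac12)$-molecule --- note $L^{1}$, not $L^{2}$. The key estimates (Lemmas \ref{La2}, \ref{La1}, \ref{L10}) bound $\int|T^{*}_{j}a_{Q}|$ and $\int|x|^{t}|T^{*}_{j}a_{Q}|$ for each dyadic shell via Plancherel and Minkowski, which sidesteps the $1/\varrho$ loss; the sum over $j$ is then split at $2^{j}\sim l^{-1}$ and $2^{j}\sim l^{-1/\varrho}$ and shown to be $\lesssim l^{\varrho(n-n/p)}$ (resp.\ $\lesssim l^{\varrho(t+n-n/p)}$) when $l<1$. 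That frequency-side bookkeeping, not a pointwise kernel bound, is what makes the argument go through for the full range $0\le\varrho<1$.
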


Compared with Theorem \ref{B2}, Theorem \ref{TH6} extend $p$ below $p_{0}$ and improve the range of $m$.

The second property investigated in this paper is $(H^{p},L^{p})$ continuity of pseudo-differential operators, which can go back to Fefferman and Stein\cite{Stein1} and Coifman and Meyer \cite{Coifman} for $p=1$, which is extended to the case $0<p\leq1$ by P\"{a}iv\"{a}rinta and Somersalo \cite{Somersalo}.
\begin{thm}[P\"{a}iv\"{a}rinta and Somersalo \cite{Somersalo}]\label{B1}
Let $0<p\leq1$ and $0\leq\delta\leq\varrho<1$.
If $a\in S^{-n(1-\varrho)(\frac{1}{p}-\frac{1}{2})}_{\varrho,\delta}$. Then the operators $T_{a}$ defined as (\ref{df}) is bounded from $H^{p}(\mathbb{R}^{n})$ to $L^{p}(\mathbb{R}^{n}).$
\end{thm}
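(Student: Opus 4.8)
The plan is to pass to the atomic level. Write $f=\sum_{k}\lambda_{k}b_{k}$ with $\sum_{k}|\lambda_{k}|^{p}\lesssim\|f\|_{H^{p}}^{p}$, each $b_{k}$ a $(p,2,N_{p})$-atom on a ball $B_{k}=B(x_{k},r_{k})$: $b_{k}$ supported in $B_{k}$, $\|b_{k}\|_{L^{2}}\le|B_{k}|^{1/2-1/p}$, and $\int x^{\alpha}b_{k}\,dx=0$ for $|\alpha|\le N_{p}:=[n(\tfrac1p-1)]$. Because $m=-n(1-\varrho)(\tfrac1p-\tfrac12)\le0$ and $0\le\delta\le\varrho<1$, $T_{a}$ is bounded on $L^{2}$ (Calder\'{o}n--Vaillancourt/H\"{o}rmander); this allows one to write $T_{a}f=\sum_{k}\lambda_{k}T_{a}b_{k}$ (first on a dense class, then by continuity) and, via $\|\sum_{k}g_{k}\|_{L^{p}}^{p}\le\sum_{k}\|g_{k}\|_{L^{p}}^{p}$ for $p\le1$, reduces everything to the uniform estimate $\|T_{a}b\|_{L^{p}}\lesssim1$ for a single atom $b$. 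Since the class $S^{m}_{\varrho,\delta}$ is invariant under translations of $x$, one may assume $b$ is supported on $B=B(0,r)$.

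Split $\mathbb{R}^{n}$ into $2\sqrt n B$ and its complement. On $2\sqrt n B$, H\"{o}lder's inequality and $L^{2}$-boundedness give $\int_{2\sqrt n B}|T_{a}b|^{p}\le|2\sqrt n B|^{1-p/2}\|T_{a}b\|_{L^{2}}^{p}\lesssim r^{n(1-p/2)}\|b\|_{L^{2}}^{p}\lesssim1$, uniformly in $r$. Outside $2\sqrt n B$ the vanishing moments enter, and it helps to separate $r\ge1$ from $r<1$. If $r\ge1$, then $|x-y|\gtrsim|x|\gtrsim1$ for $|x|>2\sqrt n r$ and $y\in B$, so integrating by parts in $\xi$ in $K(x,y)=(2\pi)^{-n}\int e^{i\langle x-y,\xi\rangle}a(x,\xi)\,d\xi$ yields $|K(x,y)|\lesssim_{L}|x-y|^{-L}$ for every $L$; with $\|b\|_{L^{1}}\le|B|^{1-1/p}$ this gives $\int_{|x|>2\sqrt n r}|T_{a}b|^{p}\lesssim r^{np(1-1/p)+n-pL}\lesssim1$ once $L>n/p$, and here no cancellation is needed.

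The substantive case is $r<1$ outside $2\sqrt n B$. Decompose $a=\sum_{j\ge0}a_{j}$ by a Littlewood--Paley partition in $\xi$, with $a_{j}$ supported on $\langle\xi\rangle\sim2^{j}$, so $T_{a}=\sum_{j}T_{a_{j}}$, $\|T_{a_{j}}\|_{L^{2}\to L^{2}}\lesssim2^{jm}$, and $|\partial_{y}^{\gamma}K_{j}(x,y)|\lesssim2^{j(m+n+|\gamma|)}\min\{1,(2^{j\varrho}|x-y|)^{-L}\}$ for all $L$; moreover the $N_{p}$ moments of $b$ (equivalently $|\widehat b(\xi)|\lesssim\|b\|_{L^{1}}(r|\xi|)^{N_{p}+1}$ on $\langle\xi\rangle\lesssim r^{-1}$) improve this to $\|T_{a_{j}}b\|_{L^{2}}\lesssim2^{jm}(r2^{j})^{N_{p}+1}2^{jn/2}\|b\|_{L^{1}}$ for $2^{j}\lesssim r^{-1}$, a genuine gain since then $r2^{j}\le1$. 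I would then organise the estimate by dyadic spatial shells $\{|x|\sim\rho\}$, $\rho\gtrsim r$: on each shell, $\int_{|x|\sim\rho}|T_{a}b|^{p}\le\sum_{j}\int_{|x|\sim\rho}|T_{a_{j}}b|^{p}$ and, by H\"{o}lder, each term is $\le\rho^{n(1-p/2)}\|T_{a_{j}}b\|_{L^{2}(|x|\sim\rho)}^{p}$; for the ``wide'' frequencies — those whose kernel width $2^{-j\varrho}$ exceeds $\rho$ — one feeds in the $L^{2}$ bounds above (with the moment improvement when $2^{j}\lesssim r^{-1}$), while for the ``narrow'' ones ($2^{-j\varrho}\lesssim\rho$) one uses the off-diagonal decay $(2^{j\varrho}\rho)^{-L}$ together with the moment conditions where they apply. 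Summing over $j$, then over $r\lesssim\rho\lesssim1$, and disposing of $\rho\gtrsim1$ by rapid kernel decay, one aims to bring everything to $\lesssim1$.

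The main obstacle is precisely this $r<1$ bookkeeping. The three scales — the atom radius $r$, the running kernel width $2^{-j\varrho}$ (which sweeps from $1$ down to $r$ and below), and the unit scale — interact tightly, and the many sums above have no slack: they close only for the sharp order $m=-n(1-\varrho)(\tfrac1p-\tfrac12)$ and with $N_{p}=[n(\tfrac1p-1)]$ moments, and only if one uses exactly the right estimate at each pair $(j,\rho)$. Neither H\"{o}lder against the \emph{global} $L^{2}$ bound alone, nor the degree-$(N_{p}+1)$ Taylor remainder of the kernel alone, suffices in the transitional range ($r^{-1}\lesssim2^{j}\lesssim r^{-1/\varrho}$ in frequency, $r\lesssim|x|\lesssim1$ in space). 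In essence the whole computation verifies that $T_{a}b$ satisfies the size and decay estimates of an $H^{p}$-molecule attached to $B$; the single ingredient of the molecular description that is not needed is the cancellation $\int T_{a}b=0$ — and its absence is exactly why the target is $L^{p}$, not $H^{p}$. Finally, when $\varrho=0$ the symbols are of exotic type and $K(x,y)$ is merely a distribution, so one must run the whole argument through the frequency-localized pieces $T_{a_{j}}$ and their (moment-improved) $L^{2}$ bounds rather than through pointwise kernel estimates; there the kernel width is always $\sim1$, the transitional range disappears, and the sums close with less effort.
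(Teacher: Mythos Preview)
Your atomic approach is sound, but the paper disposes of this particular theorem in one sentence rather than by a direct argument: P\"{a}iv\"{a}rinta and Somersalo actually proved that $T_{a}$ maps the \emph{local} Hardy space $h^{p}$ continuously to itself, and the paper simply invokes the inclusions $H^{p}\subset h^{p}\subset L^{p}$ to deduce the stated $H^{p}\to L^{p}$ boundedness. Your direct atomic strategy is, however, essentially the route the paper takes later in proving its own Theorem~\ref{TH1}, which extends the present statement to the full range $0\le\varrho\le1$, $0\le\delta<1$ (at the cost of the extra $\tfrac{n}{2}\max(0,\delta-\varrho)$ in the order for $T_{a}$). There the $r<1$ tail is organised exactly as you describe: a Littlewood--Paley decomposition in $\xi$, with the sum over $j$ split into $2^{j}<l^{-1}$, $l^{-1}\le 2^{j}\le l^{-1/\varrho}$, and $2^{j}>l^{-1/\varrho}$, each range handled by a dedicated lemma (the moment-improved low-frequency bound, the transitional estimate, and the high-frequency off-diagonal/$L^{2}$ bound, respectively). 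So the paper's one-line route buys brevity in the special case $\delta\le\varrho$; your route buys self-containment and, as the paper itself demonstrates, a template that carries over to the wider symbol classes.
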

Actually, P\"{a}iv\"{a}rinta and Somersalo \cite{Somersalo} get that $T_{a}$ is continuously $h_{p}$ into itself. Here $h_{p}$ denotes the local Hardy spaces introduced by Goldberg \cite{Goldberg}. We also refer to \cite{Park1,Park2} for the extension to Triebel-Lizorkin spaces that coincident with the local Hardy spaces for some special index. Theorem \ref{B1} holds because of the fact $H^{p}\subset h^{p}\subset L^{p}$ for $0<p<\infty$.
As we see, the case $0\leq\varrho<\delta<1$ is not considered in Theorem \ref{B1}. And this case is considered by \'{A}lvarez and Hounie \cite{Hounie} later.
\begin{thm}[\'{A}lvarez and Hounie \cite{Hounie}]\label{B3}
Let $0<\varrho\leq1$, $0\leq\delta<1$ and $p_{0}$ given as Theorem \ref{B2} $($it is understood that for $\varrho=1$, $p_{0}=n/(n+1)$$)$.
If $a\in S^{-\frac{n}{2}(1-\varrho)-\frac{n}{2}\max(0,\delta-\varrho)}_{\varrho,\delta}$. Then the operators $T_{a}$ defined as (\ref{df}) is bounded from $H^{p}(\mathbb{R}^{n})$ to $L^{p}(\mathbb{R}^{n})$ for $p_{0}\leq p\leq1$, when $0<\varrho<1$, and for $p_{0}<p\leq1$, when $\varrho=1$.
\end{thm}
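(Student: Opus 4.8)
The plan is to combine the atomic characterisation of $H^p$ with a dyadic (Littlewood--Paley) decomposition of $T_a$ in the frequency variable. Since $m\le-\tfrac n2\max(\delta-\varrho,0)$ (because $\varrho\le1$), Hörmander's theorem gives $\|T_au\|_{L^2}\lesssim\|u\|_{L^2}$; together with the inequality $\|\sum_jf_j\|_{L^p}^p\le\sum_j\|f_j\|_{L^p}^p$ valid for $0<p\le1$ and a routine density reduction, it suffices to prove $\|T_aa_Q\|_{L^p}\le C$ with $C$ independent of the $(p,2)$-atom $a_Q$; we may take $a_Q$ supported in a ball $B=B(0,r)$, normalised by $\|a_Q\|_2\le|B|^{1/2-1/p}$, and endowed with vanishing moments up to order $N:=[n(\tfrac1p-1)]$. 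Write $T_a=T^{\mathrm{low}}_a+\sum_{j\ge1}T_j$, where $T^{\mathrm{low}}_a$ has symbol supported in $|\xi|\lesssim1$ (hence a smooth, rapidly decaying kernel, so it is harmless) and $T_j$ has symbol localised to $|\xi|\sim2^j$.

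For the pieces $T_j$ I would first record the standard estimates
$$|\partial_y^\gamma K_j(x,y)|\lesssim 2^{j(m+n+|\gamma|)}\bigl(1+2^{j\varrho}|x-y|\bigr)^{-L}\quad(\text{any }L),\qquad \|T_j\|_{L^2\to L^2}\lesssim 2^{j(m+\frac n2\max(\delta-\varrho,0))}=2^{-\frac n2(1-\varrho)j};$$
the $L^2$ bound uses $\delta<1$ to absorb $x$-derivatives, and this is the only point at which the hypothesis $\delta>\varrho$ costs the extra decay $-\tfrac n2(\delta-\varrho)$ in the order. Summing the $K_j$ produces the ``strongly singular'' kernel of $T_a$: rapidly decaying for $|x-y|\ge1$, of size $\lesssim|x-y|^{-(m+n)/\varrho}$ for $|x-y|<1$, with an analogous scale-$\varrho$ estimate for the degree-$N$ Taylor remainders of $K(x,\cdot)$.

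Now fix an atom and split $\mathbb{R}^n=2B\cup E_1\cup E_2$, with $E_1=\{|x|\ge\max(2r,1)\}$ and $E_2=\{2r\le|x|<1\}$ (the latter empty when $r\ge1$). Over $2B$, Hölder and $L^2$-boundedness give $\int_{2B}|T_aa_Q|^p\le|2B|^{1-p/2}\|a_Q\|_2^p\lesssim1$. Over $E_1$, where $|x-y|\gtrsim\max(|x|,1)$ for $y\in B$, the rapid off-diagonal decay of $K$ (combined with the cancellation of $a_Q$ when $r<1$, to offset the growth of $\|a_Q\|_1$) yields an integrable bound carrying a favourable power of $r$. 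The decisive region is $E_2$: there I would keep the dyadic decomposition, test the $N$ vanishing moments of $a_Q$ against the degree-$N$ Taylor polynomial of each $K_j(x,\cdot)$ at the centre of $B$, separate the cases according to whether $2^j\le r^{-1}$ and whether $2^{j\varrho}|x|\le1$, estimate $\|T_ja_Q\|^p_{L^p(E_2)}$ in each, and sum in $j$. Tracking the powers of $r$ and of $2^j$ should show that every partial sum is controlled, the total being $\lesssim r^{\theta}$ with $\theta=\theta(n,\varrho,p)\ge0$ precisely when $p$ lies in the asserted range, the critical value $p_0$ of Theorem~\ref{B2} emerging from the summation; at $\varrho=1$ the endpoint series is only logarithmically convergent, which forces $p=p_0$ to be excluded.

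The hard part is entirely the region $E_2$. The strong singularity of $K$ on the diagonal makes the two naive estimates (pure $L^2$ on a large dilate of $B$, and a single use of the worst-case pointwise kernel bound) each blow up as $r\to0$, so one must interleave three separate gains: the cancellation provided by the atom's vanishing moments, the $2^{-j\varrho}$ spatial width of $K_j$, and the $2^{-\frac n2(1-\varrho)j}$ decay of $\|T_j\|_{L^2\to L^2}$. Balancing these against one another so that all partial sums in $j$ stay bounded is exactly what produces the threshold $p_0$, and is the step I expect to require the most care.
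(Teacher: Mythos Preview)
This theorem is quoted in the paper as a result of \'Alvarez and Hounie \cite{Hounie}; the paper does not give its own proof of it. What the paper \emph{does} prove is the stronger Theorem~\ref{TH1}, whose argument is organised quite differently from your proposal, so a comparison is still worth making.

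Your outline is essentially the original \'Alvarez--Hounie strategy: fix an atom, split physical space into a near ball $2B$, a far region $E_1=\{|x|\ge\max(2r,1)\}$, and the intermediate annulus $E_2=\{2r\le|x|<1\}$, then on $E_2$ use the dyadic pieces $T_j$, the atom's moments, and the pointwise kernel bounds $|\partial_y^\gamma K_j(x,y)|\lesssim 2^{j(m+n+|\gamma|)}(1+2^{j\varrho}|x-y|)^{-L}$ to balance the three gains. That route is correct and does produce the threshold $p_0$.

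The paper's proof of Theorem~\ref{TH1} (the extension of this result to all $0<p\le1$ and all $0\le\varrho\le1$) avoids the spatial trichotomy $2B\cup E_1\cup E_2$ altogether. Instead it estimates each $\int_{\mathbb{R}^n}|T_j a_Q|^p\,dx$ globally, splitting only the frequency index: for side length $l<1$ the sum over $j$ is cut at $2^j\sim l^{-1}$ and (when $\varrho>0$) at $2^j\sim l^{-1/\varrho}$, and each range is handled by a dedicated lemma (Lemmas~\ref{La2}, \ref{La1}, \ref{L10}). Inside those lemmas the integral in $x$ is split at the natural scale $|x|\sim 2^{-j\varrho}$ or $|x|\sim l$ rather than at a fixed annulus, and the main tool is H\"older plus Parseval/$L^2$-boundedness applied after integration by parts in $\xi$, rather than pointwise kernel bounds. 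The upshot is that the paper never sees the particular exponent $p_0$: the sums converge for every $0<p\le1$, which is why Theorem~\ref{TH1} has no lower restriction on $p$. Your approach, by contrast, inevitably produces $p_0$ because the balance on $E_2$ uses only the kernel size and the $L^2$ operator norm of $T_j$, not the finer $L^2$--$L^{p'}$ mapping (Lemma~\ref{LL}) and Parseval arguments that the paper exploits.
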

Compared with Theorem \ref{B1}, Theorem \ref{B3} relaxes the range of $\varrho,\delta$, but put a restriction on $p$ and the order of $T_{a}$. Both of them is not contain the case $\varrho=0$, $0<\delta<1$. In this paper, we prove
\begin{thm}\label{TH1}
Let $0<p\leq1$, $0\leq\varrho\leq1$ and $0\leq\delta<1$.
\begin{enumerate}
  \item If $a\in S^{-n(1-\varrho)(\frac{1}{p}-\frac{1}{2})}_{\varrho,\delta}$. Then the operators $T^{*}_{a}$ defined as (\ref{df*}) is bounded from $H^{p}(\mathbb{R}^{n})$ to $L^{p}(\mathbb{R}^{n}).$
  \item If $a\in S^{-n(1-\varrho)(\frac{1}{p}-\frac{1}{2})-\frac{n}{2}\max(0,\delta-\varrho)}_{\varrho,\delta}$. Then the operators $T_{a}$ defined as (\ref{df}) is bounded from $H^{p}(\mathbb{R}^{n})$ to $L^{p}(\mathbb{R}^{n}).$
\end{enumerate}
\end{thm}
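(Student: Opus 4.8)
The plan is to argue through the atomic decomposition of $H^p(\mathbb R^n)$. Fix an integer $N\ge[n(\tfrac1p-1)]$, to be taken large below; then $\|f\|_{H^p}^p\approx\inf\sum_i|\lambda_i|^p$, the infimum being over all decompositions $f=\sum_i\lambda_i a_i$ with $a_i$ a $(p,2,N)$ atom (supported in a cube $Q_i$, with $\|a_i\|_2\le|Q_i|^{1/2-1/p}$ and $\int x^\beta a_i\,dx=0$ for $|\beta|\le N$). Since $L^p$ is $p$-normed, $\|Tf\|_p^p\le\sum_i|\lambda_i|^p\|Ta_i\|_p^p$, so — after a routine density/limiting step, legitimate because $T^*_a$ and $T_a$ are both $L^2$-bounded under the present hypotheses — it suffices to prove the uniform bound $\|T^*_a a_Q\|_{L^p}\lesssim1$, resp.\ $\|T_a a_Q\|_{L^p}\lesssim1$, for a single $(p,2,N)$ atom $a_Q$ supported in $Q=Q(x_0,r)$.

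Over the local region $\{|x-x_0|\le2\sqrt n\,r\}$ I would combine H\"older's inequality with $L^2$-boundedness: since $m=-n(1-\varrho)(\tfrac1p-\tfrac12)\le-\tfrac n2(1-\varrho)\le-\tfrac n2\max\{\delta-\varrho,0\}$, Theorem~\ref{Tz} at $p=2$ gives $\|T^*_a a_Q\|_2\lesssim\|a_Q\|_2$, whence
\[
\int_{|x-x_0|\le2\sqrt n\,r}|T^*_a a_Q|^p\,dx\lesssim r^{n(1-p/2)}\|a_Q\|_2^p\lesssim r^{n(1-p/2)}|Q|^{p(1/2-1/p)}=1,
\]
and the same for $T_a$, since the order in part~(2) is again $\le-\tfrac n2\max\{\delta-\varrho,0\}$. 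For the tail I would split the symbol dyadically, $a=\sum_{j\ge0}a_j$ with $a_j$ supported in $\langle\xi\rangle\sim2^j$, and record by repeated integration by parts in $\xi$ the kernel bounds $|\partial_y^\beta K_j(x,y)|\lesssim2^{j(m+|\beta|+n)}(1+2^{\varrho j}|x-y|)^{-L}$ for $|\beta|\le N+1$ and every $L$, valid with the same exponents for $T^*_{a_j}$ and $T_{a_j}$. When $r\ge1$, and also when $r<1$ for $|x-x_0|$ beyond a fixed constant, the tail is soft: subtracting the degree-$N$ Taylor polynomial of the kernel at $x_0$ and using the $N$ vanishing moments of $a_Q$, the fast kernel decay yields $\lesssim1$. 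All the difficulty is in the \emph{intermediate region} $2\sqrt n\,r<|x-x_0|\lesssim1$, present only when $r<1$, where the kernel is genuinely singular.

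On the intermediate region I would work band by band, using $p$-subadditivity of $\|\cdot\|_{L^p}^p$. On the dyadic shell $|x-x_0|\sim R$, the bands $2^j\gtrsim R^{-1/\varrho}$ are killed by the decay factor, just as in the far region; for the lower bands the output $T^*_{a_j}a_Q$ is, up to fast-decaying tails, concentrated on $B(x_0,\max\{r,2^{-\varrho j}\})$, so H\"older there gives $\|T^*_{a_j}a_Q\|_{L^p}\lesssim(\max\{r,2^{-\varrho j}\})^{n(1/p-1/2)}\|T^*_{a_j}a_Q\|_{2}$, and I would bound $\|T^*_{a_j}a_Q\|_2$ by freezing the symbol at $x_0$ (its leading part becoming the Fourier multiplier $a_j(x_0,D)$, of $L^2$-norm $\lesssim2^{jm}$, with \emph{no} $\delta$-correction) together with the decay $|\widehat{a_Q}(\xi)|\lesssim(2^{j}r)^{N+1}\|a_Q\|_1$ on $\langle\xi\rangle\sim2^j$ coming from the $N$ vanishing moments. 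Summing in $j$ and over the shells, the powers of $r$ combine to $\lesssim1$ \emph{provided $N$ is large}, and the borderline order $m=-n(1-\varrho)(\tfrac1p-\tfrac12)$ is exactly the value for which the critical band $2^j\sim r^{-1}$ contributes $\approx1$ instead of a negative power of $r$. For $T_a$ the scheme is the same, but its symbol depends on the output variable and cannot be frozen; one must instead use the direct band estimate $\|T_{a_j}a_Q\|_2\lesssim2^{j(m+\frac n2\max\{\delta-\varrho,0\})}\|a_Q\|_2$, which costs an extra $2^{j\frac n2\max\{\delta-\varrho,0\}}$ — and that is precisely why part~(2) lowers the order by $\tfrac n2\max\{\delta-\varrho,0\}$: with that smaller $m$, $m+\tfrac n2\max\{\delta-\varrho,0\}$ equals the $T^*_a$ order and the computation closes as before.

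The hard part will be the intermediate region $2\sqrt n\,r<|x-x_0|\lesssim1$: the bands near $\langle\xi\rangle\sim r^{-1}$ are controlled neither by the vanishing moments nor by the $L^2$ bound alone, and one has to combine these with the concentration of each band's output so that the total uses up exactly the smoothing encoded in $m$, uniformly for $p\in(0,1]$. Everything outside that region is comparatively routine.
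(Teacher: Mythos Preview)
Your overall plan---atomic decomposition, then Littlewood--Paley in frequency, with different spatial regimes---is exactly the paper's strategy, and for $0<p<1$ your outline is essentially correct and would close. Two points deserve comment.

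\textbf{A technical difference for $T^{*}_{a}$.} You propose to bound $\|T^{*}_{a_j}a_Q\|_{2}$ by \emph{freezing} the symbol at the atom's center, getting a Fourier multiplier with no $\delta$-correction. That works, but the remainder $a_j(y,\xi)-a_j(x_0,\xi)$ carries a factor $r\,2^{j\delta}$ and must be controlled separately; this is an extra loop you do not mention. The paper avoids freezing altogether: since for $T^{*}$ the symbol depends on the \emph{input} variable $y$, one applies Minkowski and then Parseval in $x$ to get
\[
\|T^{*}_{a_j}a_Q\|_{2}\le\int|a_Q(y)|\,\|a_j(y,\cdot)\|_{2}\,dy\lesssim 2^{j(m+n/2)}\|a_Q\|_{1},
\]
with genuinely no $\delta$ entering (only the size of $a_j$, never its $y$-derivatives). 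Similarly, to exploit the vanishing moments the paper does not pass through $\widehat{a_Q}$ but Taylor-expands $e^{-i\langle y,\xi\rangle}a_j(y,\xi)$ in $y$ and uses the cancellation directly (Lemma~\ref{La2}). This is cleaner and is the real reason $T^{*}_{a}$ needs no $\tfrac n2\max\{\delta-\varrho,0\}$ correction.

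\textbf{A genuine gap at $p=1$.} Your own computation shows that in the middle range $r^{-1}\le 2^{j}\le r^{-1/\varrho}$ each band contributes $(2^{j}r)^{n(1-1/p)}$ to $\|T^{*}_{a_j}a_Q\|_{L^p}$. Raising to the $p$-th power and summing in $j$ is fine for $p<1$, but at $p=1$ every term is $\approx 1$ and the sum diverges logarithmically in $r$. The paper faces the same obstruction (Lemma~\ref{La1} with $q=p=1$ gives no decay) and overcomes it only in the case $0\le\varrho<\delta<1$ that remains after Theorem~\ref{B1}, by a further iterated subdivision of the middle range using auxiliary frozen operators at scale $l^{\lambda}$ for a sequence of $\lambda$'s (Lemma~\ref{L6} and the lemma following it, paralleling Lemmas~\ref{L7}--\ref{La0}). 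Your proposal treats all $p\in(0,1]$ uniformly and does not mention this; as written it would not close at $p=1$ when $\delta>\varrho$.
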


\section{The proof of pointwise estimate for the sharp maximal function}
Let \begin{eqnarray}\label{K}
K(x,y)
=\frac{1}{(2\pi)^n}\int_{\mathbb{R}^n} e^{ i \langle x-y,\xi\rangle}a(x,\xi)d\xi
~{\rm and}~
K^{*}(x,y)
=\frac{1}{(2\pi)^n}\int_{\mathbb{R}^n} e^{ i \langle x-y,\xi\rangle}a(y,\xi)d\xi.
\end{eqnarray}
Then $T_{a}$ and $T^{*}_{a}$ can be written as
\begin{eqnarray}
T_{a}u(x)
=\int_{\mathbb{R}^n}K(x,y) u(y)dy \quad
{\rm and}\quad
T^{*}_{a}u(x)
=\int_{\mathbb{R}^n}K^{*}(x,y) u(y)dy
\end{eqnarray}
respectively. Now we introduce the standard Littlewood-Paley partition of unity. Let $C>1$ be a constant. Set
$E_{-1}=\{\xi:|\xi|\leq 2C\}$, $E_{j}=\{\xi:C^{-1}2^{j}\geq|\xi|\leq C2^{j+1}\}$, $j=0,1,2,\cdots$.

\begin{lem}\label{L0}
There exist $\psi_{-1}(\xi),\psi(\xi)\in C^{\infty}_{0}$, such that
\begin{enumerate}
  \item $\supp\psi\subset E_{0}$, $\supp\psi_{-1}\subset E_{-1};$
  \item $0\leq\psi\leq1$, $0\leq\psi_{-1}\leq1;$
  \item $\psi_{-1}(\xi)+\sum\limits^{\infty}_{j=1}\psi(2^{-j}\xi)=1.$
\end{enumerate}
\end{lem}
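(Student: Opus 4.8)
The final statement is Lemma \ref{L0}, which is the standard construction of a Littlewood--Paley partition of unity adapted to the dyadic annuli $E_{j}$. Let me sketch a proof plan.

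The plan is to construct $\psi_{-1}$ and $\psi$ from a single radial cutoff via a telescoping identity. First I would fix $\eta\in C^{\infty}(\mathbb{R})$ that is non-increasing, equals $1$ on $(-\infty,2C^{-1}]$, equals $0$ on $[2C,\infty)$, and satisfies $0\leq\eta\leq1$; such an $\eta$ exists precisely because $C>1$ forces $2C^{-1}<2C$. Setting $\Phi(\xi)=\eta(|\xi|)$ yields a radial function which is smooth everywhere (it is identically $1$ near the origin, so there is no difficulty at $\xi=0$), is radially non-increasing, is $[0,1]$-valued, equals $1$ on $\{|\xi|\leq 2C^{-1}\}$, and has $\supp\Phi\subset\{|\xi|\leq 2C\}$.

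Next I would set $\psi_{-1}(\xi)=\Phi(\xi)$ and $\psi(\xi)=\Phi(\xi)-\Phi(2\xi)$, and read off (1) and (2) directly. Item (1) for $\psi_{-1}$ is immediate, since $\supp\psi_{-1}=\supp\Phi\subset\{|\xi|\leq 2C\}=E_{-1}$; item (2) for $\psi_{-1}$ is $0\leq\Phi\leq1$. For $\psi$: one has $\psi\leq\Phi\leq1$ because $\Phi(2\xi)\geq0$, and $\psi\geq0$ because $\Phi$ is radially non-increasing so $\Phi(\xi)\geq\Phi(2\xi)$; moreover $\psi$ vanishes for $|\xi|>2C$ (there $\Phi(\xi)=\Phi(2\xi)=0$) and for $|\xi|\leq C^{-1}$ (there $|2\xi|\leq 2C^{-1}$ forces $\Phi(\xi)=\Phi(2\xi)=1$), so $\supp\psi\subset\{C^{-1}\leq|\xi|\leq 2C\}=E_{0}$.

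For (3) I would compute the telescoping sum
\[
\sum_{j=1}^{N}\psi(2^{-j}\xi)=\sum_{j=1}^{N}\Big(\Phi(2^{-j}\xi)-\Phi(2^{-(j-1)}\xi)\Big)=\Phi(2^{-N}\xi)-\Phi(\xi),
\]
and then let $N\to\infty$: by continuity and the fact that $\Phi\equiv1$ near $0$, one has $\Phi(2^{-N}\xi)\to1$ for every fixed $\xi$, hence $\sum_{j=1}^{\infty}\psi(2^{-j}\xi)=1-\Phi(\xi)$, which together with $\psi_{-1}=\Phi$ gives exactly condition (3). For any fixed $\xi$ the series in fact has only finitely many nonzero terms, since $2^{-j}\xi$ eventually leaves $\supp\psi$.

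There is no genuine obstacle here; the only point requiring a little care is calibrating the radial profile against the dilation factor $2$ and the constant $C$ — one must take $\Phi\equiv1$ on the ball of radius $2C^{-1}$ (rather than $C^{-1}$) so that $\supp\psi$ is pushed inside the inner sphere $|\xi|=C^{-1}$ of $E_{0}$, while simultaneously keeping $\supp\Phi$ inside $|\xi|=2C$; this is possible exactly because $C>1$. It is also worth recording that, with $\psi$ as above, $\psi(2^{-j}\cdot)$ is supported in $\{C^{-1}2^{j}\leq|\xi|\leq C2^{j+1}\}$, so the annuli should be read as $E_{j}=\{\xi:C^{-1}2^{j}\leq|\xi|\leq C2^{j+1}\}$ for $j\geq0$.
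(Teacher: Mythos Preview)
Your construction is correct and complete: the telescoping argument with $\psi_{-1}=\Phi$ and $\psi(\xi)=\Phi(\xi)-\Phi(2\xi)$ is the standard route, and you have checked the support and range conditions carefully against the constant $C>1$. The paper itself gives no proof of this lemma, treating it as a well-known fact, so there is nothing to compare against; your write-up would serve as a perfectly good justification.
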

By Lemma \ref{L0}, the symbol $a(x,\xi)$ can been written as
$$a(x,\xi)=a(x,\xi)\big(\psi_{-1}(\xi)+\sum\limits^{\infty}_{j=1}\psi(2^{-j}\xi)\big)=:\sum\limits^{\infty}_{j=0}a_{j}(x,\xi).$$
Consequently, the operator $T_{a}$ and $T^{*}_{a}$ can been decomposed as
\begin{eqnarray}\label{de}
T_{a}u(x)=\sum\limits_{j=0}^{\infty}T_{j}u(x)
~
 \mathrm{and}~
T^{*}_{a}u(x)=\sum\limits_{j=0}^{\infty}T^{*}_{j}u(x),
\end{eqnarray}
respectively, where
\begin{eqnarray}
T_{j}u(x)
=\int_{\mathbb{R}^n}K_{j}(x,y) u(y)dy \quad
{\rm with}\quad
K_{j}(x,y)
=\frac{1}{(2\pi)^n}\int_{\mathbb{R}^n} e^{ i \langle x-y,\xi\rangle}a_{j}(x,\xi)d\xi
\end{eqnarray}
\begin{eqnarray}
T^{*}_{j}u(x)
=\int_{\mathbb{R}^n}K^{*}_{j}(x,y) u(y)dy
\quad
{\rm with}\quad
K^{*}_{j}(x,y)
=\frac{1}{(2\pi)^n}\int_{\mathbb{R}^n} e^{ i \langle x-y,\xi\rangle}a_{j}(y,\xi)d\xi
\end{eqnarray}

\begin{lem}\label{LL}
Let $0\leq\varrho\leq1,$ $0\leq\delta<1$ and $a(x,\xi)\in S^{m}_{\varrho,\delta}$.
If $1<p\leq2\leq q<\infty$ and $$m\leq -n(\frac{1}{p}-\frac{1}{q})-\frac{n}{2}\max\{\delta-\varrho,0\},$$
then
$$\|T_{a}u\|_{L^{q}}\lesssim\|u\|_{L^{p}}~\mathrm{and}~\|T^{*}_{a}u\|_{L^{q}}\lesssim\|u\|_{L^{p}}.$$
\end{lem}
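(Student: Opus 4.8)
The plan is to reduce the $L^p \to L^q$ estimate to two endpoint facts and then interpolate. First I would record the $L^2$-continuity: by the known sharp $L^2$-bound for H\"ormander symbols (and its dual version), a symbol in $S^{m}_{\varrho,\delta}$ with $m \le -\tfrac{n}{2}\max\{\delta-\varrho,0\}$ gives $\|T_a u\|_{L^2}\lesssim\|u\|_{L^2}$ and likewise for $T_a^{*}$; this is the $p=q=2$ corner. The other ingredient is an off-diagonal Sobolev-type estimate: for a symbol in $S^{-n/2}_{\varrho,\delta}$ (more precisely, $S^{-n/2-\tfrac{n}{2}\max\{\delta-\varrho,0\}}$ when $\varrho<\delta$, which handles the extra loss coming from the $x$-derivatives), the operator $T_a$ maps $L^1 \to L^2$. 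This second fact is where the Littlewood--Paley decomposition $T_a=\sum_{j\ge0} T_j$ from \eqref{de} enters: I would estimate $\|T_j u\|_{L^2}$ by writing $T_j u = T_j u$ and using that the kernel piece $K_j(x,y)$ satisfies, after integration by parts in $\xi$ on the support $|\xi|\sim 2^j$, the bounds $|K_j(x,y)|\lesssim 2^{j(m+n)}(1+2^{j\varrho}|x-y|)^{-N}$ for every $N$, with the gain $2^{j\delta|\beta|}$ from $x$-derivatives accounted for; summing a geometric series in $j$ (which converges exactly when $m+n-\tfrac{n}{2}\max\{\delta-\varrho,0\}\le \tfrac{n}{2}$, i.e.\ $m\le -\tfrac{n}{2}-\tfrac{n}{2}\max\{\delta-\varrho,0\}$, after applying Young/Cauchy--Schwarz at each level) yields $\|T_a u\|_{L^2}\lesssim\|u\|_{L^1}$. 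The dual piece $T_a^{*}$ is handled identically with $K_j^{*}$, noting that the $\xi$-integration by parts now sees the $y$-variable carrying the $\delta$-loss but the final kernel estimate has the same form.

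Granting these, the interpolation step is routine. For the diagonal $p=q$, $1<p<2$, I would interpolate between $(L^1,L^1)$ — which is not available, so instead I interpolate between $L^2\to L^2$ (order $-\tfrac{n}{2}\max\{\delta-\varrho,0\}$) and $L^1\to L^2$ (order $-\tfrac{n}{2}-\tfrac{n}{2}\max\{\delta-\varrho,0\}$) to get, for $\tfrac1p=\tfrac{1-\theta}{2}+\theta$, i.e.\ $\theta=\tfrac2p-1\in[0,1]$, the bound $L^p\to L^2$ with order $-n\theta\cdot\tfrac12-\tfrac{n}{2}\max\{\delta-\varrho,0\}=-n(\tfrac1p-\tfrac12)-\tfrac{n}{2}\max\{\delta-\varrho,0\}$. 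Then, separately, interpolating $L^2\to L^2$ against the adjoint statement $L^1\to L^2$ dualized to $L^2\to L^\infty$ gives $L^2\to L^q$ with order $-n(\tfrac12-\tfrac1q)-\tfrac{n}{2}\max\{\delta-\varrho,0\}$. A final composition (or a direct three-line application of Stein--Weiss/Riesz--Thorin along the segment joining the exponent pairs $(1/p,1/q)$ that can be written as a convex combination of $(1,1/2)$, $(1/2,0)$ and $(1/2,1/2)$) produces exactly $m\le -n(\tfrac1p-\tfrac1q)-\tfrac{n}{2}\max\{\delta-\varrho,0\}$ for $1<p\le 2\le q<\infty$, since for such $(p,q)$ we have $\tfrac1p-\tfrac1q=(\tfrac1p-\tfrac12)+(\tfrac12-\tfrac1q)$ and each term is nonnegative.

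The main obstacle I anticipate is proving the $L^1\to L^2$ endpoint cleanly in the regime $0\le\varrho<\delta<1$, where the naive kernel bound loses $2^{j\delta|\beta|}$ for each $x$-derivative and a straight pointwise-kernel argument does not close. The fix is to estimate each dyadic block $\|T_j\|_{L^1\to L^2}$ by combining a crude $L^1\to L^\infty$ bound $\|T_j u\|_{L^\infty}\lesssim 2^{j(m+n)}\|u\|_{L^1}$ with an $L^1\to L^1$ (or $L^2\to L^2$) bound that costs $2^{j\tfrac{n}{2}\max\{\delta-\varrho,0\}}$, interpolating \emph{within} the block, and only then summing over $j$; the geometric decay survives precisely because of the hypothesis on $m$. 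The rest — the integration-by-parts kernel estimates, the Young/Schur summation, and the Riesz--Thorin bookkeeping — is standard and I would not belabor it. For the dual operator $T_a^{*}$ one can alternatively avoid redoing the analysis by observing $(T_a)^{*}=T_{a}^{*}$ is again of the stated form and invoking duality together with the $T_a$ estimates, since the hypothesis on $m$ is symmetric under $(p,q)\mapsto(q',p')$.
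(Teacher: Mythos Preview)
Your approach differs substantially from the paper's, which is essentially a one-line reduction: factor $T_a = T_c \circ J^{-s}$ with $J^{-s}=(1-\Delta)^{-s/2}$ and $c=a\langle\xi\rangle^{s}\in S^{m+s}_{\varrho,\delta}$, then use Hardy--Littlewood--Sobolev for $J^{-s}\colon L^p\to L^2$ (with $s=n(1/p-1/2)$) and the sharp $L^2$-bound for $T_c$. The $L^2\to L^q$ half comes from the same trick on the adjoint side (equivalently, from the Sobolev embedding $H^{n(1/2-1/q)}\hookrightarrow L^q$ together with the composition calculus for $J^{s_2}\circ T_a$, which is harmless since $\langle\xi\rangle^{s_2}\in S^{s_2}_{1,0}$ and $\delta<1$). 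This is the \'Alvarez--Hounie argument the paper cites; no Littlewood--Paley decomposition, kernel estimates, or interpolation are needed.

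Your route via an $L^1\to L^2$ endpoint and interpolation has two genuine gaps. First, the endpoint $T_a\colon L^1\to L^2$ \emph{fails} at the critical order $m=-\tfrac{n}{2}-\tfrac{n}{2}\max\{\delta-\varrho,0\}$: already for the Bessel potential $J^{-n/2}$ (the case $\varrho=1$, $\delta=0$) one only gets $L^1\to L^{2,\infty}$, since $\langle\xi\rangle^{-n/2}\notin L^2$. One can rescue this with a strict inequality and Stein interpolation of the analytic family $a\langle\xi\rangle^{-z}$, but that must be said. Second, and more seriously, your ``final composition (or three-line)'' step does not close as written: you cannot compose $T_a$ with itself, and the triangle with vertices $(1,\tfrac12)$, $(\tfrac12,0)$, $(\tfrac12,\tfrac12)$ in the $(1/p,1/q)$-plane only covers the region $\tfrac1p-\tfrac1q\le\tfrac12$, whereas the lemma allows $\tfrac1p-\tfrac1q$ arbitrarily close to $1$. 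The correct bridge is exactly the factorization $T_a = T_{a\langle\xi\rangle^{s_1}}\circ J^{-s_1}$: apply your $L^2\to L^q$ bound to the first factor and HLS to the second. Once you write that down you have reproduced the paper's argument, and the dyadic kernel analysis becomes superfluous. Your final duality remark for $T_a^{*}$ is correct and is also how the paper handles the second inequality.
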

By Hardy-Littlewood-Sobolev estimate and $L^{2}$-estimate for pseudo-differential operators, \'{A}lvarez and Hounie \cite{Hounie} proved the first inequality in the case of $0<\varrho\leq1$. The case $\varrho=0$ and the second inequality can been gotten by the same way.

\begin{lem}\label{L8}
Let $Q(x_{0},l)$ be a fixed cube with side length $l<1$. Suppose$0\leq\varrho\leq1,$ $0\leq\delta<1$ and  $1<p\leq2$. For any positive integer $j$ satisfying $2^{j}l<1$, if  $a(x,\xi)\in S^{-\frac{n}{p}(1-\varrho)}_{\varrho,\delta}$, then
\begin{eqnarray}\label{E012}
\int_{\mathbb{R}^{n}}|u(y)||K_{j}(x,y)-K_{j}(z,y)|dy\lesssim 2^{j}lM_{p}u(x_{0}),\quad \forall x,z\in Q(x_{0},l).
\end{eqnarray}

if  $a(x,\xi)\in S^{-\frac{n}{p}(1-\varrho)-\frac{n}{2}\max\{\delta-\varrho,0\}}_{\varrho,\delta}$, then
\begin{eqnarray}\label{E0120}
\int_{\mathbb{R}^{n}}|u(y)||K^{*}_{j}(x,y)-K^{*}_{j}(z,y)|dy\lesssim 2^{j}lM_{p}u(x_{0}),\quad \forall x,z\in Q(x_{0},l).
\end{eqnarray}
\end{lem}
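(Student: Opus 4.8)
The plan is to obtain pointwise size and regularity estimates for the Littlewood–Paley pieces $K_j$ and $K_j^*$ of the kernel, and then integrate them against $|u|$ after splitting $\mathbb{R}^n$ into dyadic annuli centered at $x_0$. First I would record the basic estimate on $K_j$: since $a_j(x,\xi)$ is supported in $|\xi|\sim 2^j$ and satisfies $|\partial_\xi^\alpha\partial_x^\beta a_j(x,\xi)|\lesssim 2^{j(m-\varrho|\alpha|+\delta|\beta|)}$ with $m=-\tfrac{n}{p}(1-\varrho)$ (respectively $m=-\tfrac{n}{p}(1-\varrho)-\tfrac n2\max\{\delta-\varrho,0\}$ for $K_j^*$), repeated integration by parts in $\xi$ in the oscillatory integral defining $K_j(x,y)$ yields, for every $N\ge 0$,
\begin{eqnarray*}
|K_j(x,y)|\lesssim 2^{j(n+m)}\,(1+2^{j\varrho}|x-y|)^{-N},
\end{eqnarray*}
and similarly $|\nabla_x K_j(x,y)|\lesssim 2^{j(n+1+m)}(1+2^{j\varrho}|x-y|)^{-N}$ — here one gains a full factor $2^j$ from differentiating the phase $e^{i\langle x-y,\xi\rangle}$, while the $\delta$-loss in $x$-derivatives is harmless because $\delta<1$ and one only needs finitely many $x$-derivatives after choosing $N$; for $K_j^*$ the $x$-derivative falls on $a(y,\xi)$, producing the extra $2^{j\delta|\beta|}$ losses, which is exactly why the symbol order for $T_a^*$ must be lowered by $\tfrac n2\max\{\delta-\varrho,0\}$ so that the same bounds hold. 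By the mean value theorem, for $x,z\in Q(x_0,l)$,
\begin{eqnarray*}
|K_j(x,y)-K_j(z,y)|\lesssim 2^j l\cdot 2^{j(n+m)}\bigl((1+2^{j\varrho}|x-y|)^{-N}+(1+2^{j\varrho}|z-y|)^{-N}\bigr),
\end{eqnarray*}
and since $2^j l<1$, both $|x-y|$ and $|z-y|$ are comparable to $|x_0-y|$ up to a bounded factor inside the rapidly decaying weight.

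Next I would integrate against $|u(y)|$. Split $\mathbb{R}^n=\bigcup_{k\ge 0} A_k$ where $A_0=\{|y-x_0|\le 2^{-j\varrho}\}$ and $A_k=\{2^{k-1}2^{-j\varrho}<|y-x_0|\le 2^k 2^{-j\varrho}\}$ for $k\ge 1$. On $A_k$ the weight is $\lesssim 2^{-Nk}$, while $|A_k|\lesssim 2^{kn}2^{-jn\varrho}$, and by Hölder's inequality with exponent $p$,
\begin{eqnarray*}
\int_{A_k}|u(y)|\,dy\lesssim |A_k|^{1-1/p}\Bigl(\int_{A_k}|u|^p\Bigr)^{1/p}\lesssim \bigl(2^{kn}2^{-jn\varrho}\bigr)^{1-1/p}\bigl(2^{kn}2^{-jn\varrho}\bigr)^{1/p}M_p u(x_0)=2^{kn}2^{-jn\varrho}M_pu(x_0),
\end{eqnarray*}
using that $A_k$ sits inside a cube of side $\sim 2^k 2^{-j\varrho}$ containing $x_0$ (since $l<1$, this cube has side comparable to a cube on which the Hardy–Littlewood maximal average is controlled by $M_p u(x_0)$). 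Summing the contributions gives
\begin{eqnarray*}
\int_{\mathbb{R}^n}|u(y)||K_j(x,y)-K_j(z,y)|\,dy\lesssim 2^j l\cdot 2^{j(n+m)}\sum_{k\ge0}2^{-Nk}2^{kn}2^{-jn\varrho}M_pu(x_0),
\end{eqnarray*}
and choosing $N>n$ makes the sum in $k$ converge to an absolute constant; the remaining power of $2^j$ is $2^{j(n+m)}\cdot 2^{-jn\varrho}=2^{j(n+m-n\varrho)}=2^{j(n(1-\varrho)+m)}$, which equals $2^0=1$ precisely because $m=-n(1-\varrho)/p+\tfrac np(1-\varrho)$… — more carefully, $n(1-\varrho)+m = n(1-\varrho)-\tfrac np(1-\varrho)=n(1-\varrho)(1-\tfrac1p)\ge 0$, so the exponent is nonpositive and $2^{j(n(1-\varrho)+m)}\le 1$; actually we want it bounded by a constant, which holds. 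This yields the bound $2^j l\, M_p u(x_0)$ as claimed. The $K_j^*$ case is identical once the size/regularity estimates above are in force, because the lowered order of the symbol restores $n+m^*-n\varrho\le 0$.

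The main obstacle, and the step deserving the most care, is the integration-by-parts argument establishing the size and gradient bounds on $K_j$, $K_j^*$ uniformly in $j$: one must track exactly how many $\xi$-derivatives are needed, verify that the $\delta$-loss $2^{j\delta|\beta|}$ incurred when differentiating $a(y,\xi)$ in the $K_j^*$ kernel is compensated by the decay $(1+2^{j\varrho}|x-y|)^{-N}$ together with the improved symbol order, and confirm that the annular decomposition still works when $\varrho=0$ (where there is no decay in $|x-y|$ from the phase and one instead relies on $\delta<1$ and the symbol order, integrating over $|y-x_0|\lesssim 1$ via the rapid decay in $2^{j}|x-y|$ that survives when $\varrho=0$ only after rescaling — here one uses $|\xi|\sim 2^j$ so that $|x-y|$-decay is at scale $2^{-j\cdot 0}=1$, and the argument is salvaged by the hypothesis $2^j l<1$ which forces $l<2^{-j}$, keeping the geometric series convergent). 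I would double-check this $\varrho=0$ endpoint separately, since it is exactly the case excluded in the classical references and is the new content of the lemma.
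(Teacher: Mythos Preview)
Your argument has a genuine gap at the crucial arithmetic step. You correctly compute
\[
n+m-n\varrho \;=\; n(1-\varrho)+m \;=\; n(1-\varrho)\Bigl(1-\tfrac1p\Bigr),
\]
and correctly note that this is $\ge 0$ for $1<p\le 2$. But then you write ``so the exponent is nonpositive and $2^{j(n(1-\varrho)+m)}\le 1$,'' which is the opposite conclusion. In fact the exponent is \emph{strictly positive} whenever $\varrho<1$ and $p>1$, so your bound produces an extra factor $2^{j n(1-\varrho)(1-1/p)}$ that blows up in $j$. The pointwise kernel estimate $|K_j(x,y)|\lesssim 2^{j(n+m)}(1+2^{j\varrho}|x-y|)^{-N}$ is simply too crude here: when you integrate against $|u|$ using only pointwise bounds, the Hardy--Littlewood average you recover is $Mu(x_0)$, not $M_p u(x_0)$, and the power-counting fails exactly by the exponent above.

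The remedy, and what the paper does, is to replace the pointwise kernel bound by an $L^{p'}_y$-norm bound. Split at $|y-x_0|\sim 2^{-j\varrho}$ as you suggest, but on the inner region apply H\"older with exponents $p,p'$ and estimate $\bigl\|\int e^{i\langle \tilde x-y,\xi\rangle}a_j(\cdot,\xi)(x-z)\cdot\xi\,d\xi\bigr\|_{L^{p'}_y}$. For $K_j$ this is a Fourier transform in $y$ (since $a_j(x,\xi)$ is independent of $y$) and Parseval/Hausdorff--Young gives the sharp bound; this is the ``standard'' argument the paper attributes to Chanillo--Torchinsky. For $K_j^*$ the symbol $a_j(y,\xi)$ depends on $y$, so Parseval is unavailable; instead one recognizes the inner integral as $T^{*}_{b_j}g_j$ for a suitable symbol $b_j\in S^{-n(1/2-1/p')-\frac n2\max\{\delta-\varrho,0\}}_{\varrho,\delta}$ and invokes the $L^2\to L^{p'}$ bound (the paper's Lemma~\ref{LL}). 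This is precisely where the extra $-\tfrac n2\max\{\delta-\varrho,0\}$ in the symbol order for $T_a^*$ is consumed --- not, as you wrote, because ``the $x$-derivative falls on $a(y,\xi)$'' (it doesn't; $a(y,\xi)$ is independent of $x$), but because the $L^{p'}_y$-norm estimate requires a genuine PDO mapping bound rather than pure Fourier analysis. The outer region $|y-x_0|\gtrsim 2^{-j\varrho}$ is handled the same way after first integrating by parts $N>n/p$ times in $\xi$.
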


\begin{proof}
The idea behind the proof of (\ref{E012}) is standard which could be found in \cite{Chanillo}. So we omit it here. However, to prove (\ref{E0120}), this method has to be modified since the Parseval's identity can not be used directly. So we list the details here. First, integrand of left side of (\ref{E0120}) can be bounded by
\begin{eqnarray}\label{E12}
\int_{\mathbb{R}^{n}}|u(y)||\int_{\mathbb{R}^n}\big(e^{i\langle x-y,\xi\rangle}-e^{ i \langle z-y,\xi\rangle}\big)a_{j}(y,\xi)d\xi|dy.
\end{eqnarray}
Break up this integrand as follows
\begin{eqnarray*}
\int_{|y-x_{0}|\leq2^{-j\varrho+1}}
+
\int_{|y-x_{0}|>2^{-j\varrho+1}}
\end{eqnarray*}
H\"{o}lder's inequality show that the first term is bounded by
\begin{eqnarray}\label{b2}
\big(\int_{|y-x_{0}|\leq2^{-j\varrho+1}}|u(y)|^{p}dy\big)^{\frac{1}{p}}
\big(\int_{\mathbb{R}^{n}}|\int_{\mathbb{R}^n} e^{ i \langle \tilde{x}-y,\xi\rangle}a_{j}(y,\xi)(x-z)\cdot\xi d\xi|^{p'}dy\big)^{\frac{1}{p'}},
\end{eqnarray}
where $\tilde{x}$ denotes some point between $x$ and $z$. For any fixed $x$ and $z$, let $b_{j}(y,\xi)=a_j(\tilde{x}-y,\xi)|\xi|^{\frac{n}{p}(1-\varrho)-n(\frac{1}{2}-\frac{1}{p'})}$ and $\widehat{g_{j}(\xi)}=|\xi|^{-\frac{n}{p}(1-\varrho)+n(\frac{1}{2}-\frac{1}{p'})}\chi_{j}(\xi)(x-z)\cdot\xi $. Then we can write
\begin{eqnarray*}
\int_{\mathbb{R}^n} e^{ i \langle y,\xi\rangle}a_{j}(\tilde{x}-y,\xi)(x-z)\cdot\xi d\xi=T_{b_{j}}g_{j}(y)
\end{eqnarray*}
Notice that $b_{j}\in S^{-n(\frac{1}{2}-\frac{1}{p'})-\frac{n}{2}\max\{\delta-\varrho,0\}}_{\varrho,\delta}$, we have by Lemma \ref{LL}
$$\|T^{*}_{b_{j}}g_{j}\|_{L^{p'}}\lesssim \|g_{j}\|_{L^{2}}=\|\hat{g_{j}}\|_{L^{2}}.$$
Therefor, (\ref{b2}) is bounded by
\begin{eqnarray*}
2^{j}lM_{p}u(x_{0}).
\end{eqnarray*}

By H\"{o}lder's inequality, integrating by parts and the fact  $|y-x_{0}|\sim|y-x|$ that follows from $2^{j}l<1$, $x\in Q(x_{0},l)$ and $|y-x_{0}|>2^{-j\varrho+1}$, the second term is bounded by
\begin{eqnarray}\label{b3}
&&\big(\int_{|y-x_{0}|>2^{-j\varrho+1}}\frac{|u(y)|^{p}}{|y-x_{0}|^{pN}}dy\big)^{\frac{1}{p}}\nonumber\\
&&\quad\times\sum\limits_{|\alpha|=N}\big(\int_{\mathbb{R}^n}|\int_{\mathbb{R}^n} e^{ i \langle \tilde{x}-y,\xi\rangle}\partial^{\alpha}_{\xi}\big(a_{j}(y,\xi)(x-z)\cdot\xi\big) d\xi|^{p'}dy\big)^{\frac{1}{p'}}.
\end{eqnarray}
For any fixed $x$ and $z$, let $\tilde{b}_{j}(y,\xi)=\partial^{\alpha}_{\xi}\big(a_j(y,\xi)(x-z)\cdot\xi\big)|\xi|^{\frac{n}{p}(1-\varrho)-n(\frac{1}{2}-\frac{1}{p'})+\varrho |\alpha|}$ and $\widehat{\tilde{g}_{j}(\xi)}=|\xi|^{-\frac{n}{p}(1-\varrho)+n(\frac{1}{2}-\frac{1}{p'})-\varrho|\alpha|}\chi_{j}(\xi)$. Then we can write
\begin{eqnarray*}
\int_{\mathbb{R}^n} e^{ i \langle y,\xi\rangle}\partial^{\alpha}_{\xi}\big(a_{j}(\tilde{x}-y,\xi)(x-z)\cdot\xi\big) d\xi
=T^{*}_{\tilde{b}_{j}}\tilde{g_{j}}(y)
\end{eqnarray*}
Clearly, $\tilde{b}_{j}\in S^{-n(\frac{1}{2}-\frac{1}{p'})-\frac{n}{2}\max\{\delta-\varrho,0\}}_{\varrho,\delta}$ with bounds $\lesssim 2^{j}l$. Moreover we have by Lemma \ref{LL}
$$\|T^{*}_{\tilde{b}_{j}}\tilde{g_{j}}\|_{L^{p'}}\lesssim 2^{j}l\|\tilde{g_{j}}\|_{L^{2}}=2^{j}l\|\hat{\tilde{g_{j}}}\|_{L^{2}}.$$
By simple calculation, we can get (\ref{b3}) is bounded by
\begin{eqnarray*}
&\lesssim&2^{j}lM_{p}u(x_{0}).
\end{eqnarray*}
Thus the following desired estimate can be gotten.
\end{proof}

\begin{lem}\label{L9}
Let $Q(x_{0},l)$ be a fixed cube with side length $l<1$ $0\leq\varrho\leq1$ and $0\leq\delta<1$. For any positive integer $N>\frac{n}{p}$ and any positive integer $j$ with $l^{-1}\leq2^{j}\leq l^{-\frac{1}{\varrho}}$,  if $a\in S^{-\frac{n}{p}(1-\varrho)-\frac{n}{2}\max\{\delta-\varrho,0\}}_{\varrho,\delta}$, then
\begin{eqnarray}\label{u00}
\frac{1}{|Q|}\int_{Q(x_{0},l)}|T_{j}u(x)|dx
&\lesssim&2^{j\frac{n}{2}(\frac{n}{Np}-1)}l^{\frac{n}{2}(\frac{n}{Np}-1)}M_{p}u(x_{0})
\end{eqnarray}
and
\begin{eqnarray}\label{u001}
\frac{1}{|Q|}\int_{Q(x_{0},l)}|T^{*}_{j}u(x)|dx
&\lesssim&2^{j\frac{n}{2}(\frac{n}{Np}-1)}l^{\frac{n}{2}(\frac{n}{Np}-1)}M_{p}u(x_{0}).
\end{eqnarray}
\end{lem}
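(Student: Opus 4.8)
The plan is to split each $T_j$ (and $T^{*}_j$) on the \emph{kernel} side rather than on the function side, localizing in $x-y$ at the natural scale $2^{-j\varrho}$, and to feed the resulting pieces into the frequency-localized $(L^p,L^q)$ smoothing of Lemma \ref{LL}.

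Fix $j$ with $l^{-1}\le 2^j\le l^{-1/\varrho}$; this is exactly $l\le 2^{-j\varrho}$, so $Q=Q(x_0,l)\subset B(x_0,c\,2^{-j\varrho})$. Choose a Littlewood--Paley partition $1=\phi_0(z)+\sum_{k\ge1}\phi(2^{-k}z)$ with $\supp\phi\subset\{\tfrac12<|z|<2\}$, $\supp\phi_0\subset\{|z|<1\}$, and write $K_j=K_{j,0}+\sum_{k\ge1}K_{j,k}$ with $K_{j,0}(x,y)=\phi_0(2^{j\varrho}(x-y))K_j(x,y)$ and $K_{j,k}(x,y)=\phi(2^{-k}2^{j\varrho}(x-y))K_j(x,y)$; correspondingly $T_j=T_{j,0}+\sum_{k\ge1}T_{j,k}$, with $K_{j,0}$ supported in $\{|x-y|\lesssim 2^{-j\varrho}\}$ and $K_{j,k}$ in $\{|x-y|\sim 2^k2^{-j\varrho}\}$. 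Multiplying the Schwartz kernel of a PDO by a Schwartz function of $x-y$ produces a PDO whose symbol is the old one convolved in $\xi$ with the Fourier transform of that function; since $\phi$ vanishes near $0$, $\widehat\phi$ has all moments equal to zero, so Taylor-expanding $a_j$ in $\xi$ to order $N$ — using $|\partial_\xi^\gamma a_j|\lesssim 2^{j(m-\varrho|\gamma|)}$ and the fact that the mollification acts at $\xi$-scale $2^{j\varrho-k}$ — shows (this is routine, up to negligible tails) that $T_{j,k}$ is a PDO with symbol $a_{j,k}\in S^{m}_{\varrho,\delta}$, frequency-localized near $|\xi|\sim 2^j$, whose relevant seminorms are $O(2^{-kN})$; likewise $a_{j,0}\in S^m_{\varrho,\delta}$ with $O(1)$ seminorms. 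The dual operators $T^{*}_j$ are handled identically, with $a_j(y,\xi)$ in place of $a_j(x,\xi)$.

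Set $q=\tfrac{2Np}{Np-n}$; since $N>n/p$ one has $2\le q<\infty$ and $-n/q=\tfrac n2\big(\tfrac n{Np}-1\big)$, the exponent in the statement. With $m=-\tfrac np(1-\varrho)-\tfrac n2\max\{\delta-\varrho,0\}$ and $\mu=\tfrac{n\varrho}p-\tfrac nq$, one checks $a_{j,k}=2^{j\mu}b_{j,k}$ with $b_{j,k}\in S^{-n(1/p-1/q)-\frac n2\max\{\delta-\varrho,0\}}_{\varrho,\delta}$ of seminorm $O(2^{-kN})$ (and $b_{j,0}$ of seminorm $O(1)$), so Lemma \ref{LL} — whose implied constant depends only on finitely many symbol seminorms — gives $\|T_{j,k}g\|_{L^q}\lesssim 2^{-kN}2^{j\mu}\|g\|_{L^p}$, $\|T_{j,0}g\|_{L^q}\lesssim 2^{j\mu}\|g\|_{L^p}$, and the same for the $T^{*}$'s. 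Now fix $x\in Q$. The support of $K_{j,0}$ forces $T_{j,0}u(x)=T_{j,0}\big(u\,\chi_{B(x_0,c'2^{-j\varrho})}\big)(x)$ and that of $K_{j,k}$ forces $T_{j,k}u(x)=T_{j,k}\big(u\,\chi_{B(x_0,c'2^k2^{-j\varrho})}\big)(x)$; using $\int_{B(x_0,r)}|u|^p\lesssim r^n\,(M_pu(x_0))^p$ and $\mu-\tfrac{\varrho n}p=-\tfrac nq$, H\"older on $Q$ yields
\[
\frac1{|Q|}\int_Q|T_{j,0}u|\lesssim l^{-n/q}2^{j\mu}\big(2^{-j\varrho}\big)^{n/p}M_pu(x_0)=(2^jl)^{-n/q}M_pu(x_0),
\]
\[
\frac1{|Q|}\int_Q|T_{j,k}u|\lesssim l^{-n/q}2^{-kN}2^{j\mu}\big(2^k2^{-j\varrho}\big)^{n/p}M_pu(x_0)=2^{k(n/p-N)}(2^jl)^{-n/q}M_pu(x_0).
\]
Summing the second over $k\ge1$ (the series converges \emph{precisely} because $N>n/p$) and adding the first gives $\tfrac1{|Q|}\int_Q|T_ju|\lesssim(2^jl)^{-n/q}M_pu(x_0)$, which is (\ref{u00}); the identical argument with $T^{*}_j$ and the $T^{*}$-half of Lemma \ref{LL} gives (\ref{u001}).

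The only real obstacle is the second paragraph. A pointwise size estimate on $K_j$ is useless: $|K_j(x,y)|\lesssim 2^{j(n+m)}(1+2^{j\varrho}|x-y|)^{-M}$ only produces a factor $2^{j(n+m-\varrho n)}=2^{jn(1-\varrho)(1-1/p)}$, which \emph{grows} in $j$ as soon as $0<\varrho<1<p$, so it can never yield the asserted decay $(2^jl)^{-n/q}$. One genuinely needs the oscillation of $K_j$, i.e. the $L^2$-smoothing already packaged in Lemma \ref{LL}; the spatial Littlewood--Paley splitting of the kernel is the device that makes it usable, since each piece $T_{j,k}$ keeps a full pseudo-differential structure of order $m$ while acquiring the gain $2^{-kN}$ from the $N$ vanishing moments of the cutoff, and balancing that gain against the spatial spread $2^k2^{-j\varrho}$ (through the localization of $u$) is exactly where $N>n/p$ is needed — just as $2^j\le l^{-1/\varrho}$ is what allows $Q$ to be swallowed by $B(x_0,c2^{-j\varrho})$ in the first place.
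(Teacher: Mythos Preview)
Your argument is correct and takes a genuinely different route from the paper's. The paper splits the \emph{function} at a single optimized scale $T=(2^{j}l)^{n/(2N)}2^{-j\varrho}$: the near piece is handled by H\"older on $Q$ together with the $(L^{p},L^{2})$ bound of Lemma~\ref{LL}, while the far piece is treated by a direct kernel estimate (integration by parts in $\xi$ to gain $|x-y|^{-N}$, then Hausdorff--Young). You instead dyadically decompose the \emph{kernel} in the spatial variable $x-y$ at the base scale $2^{-j\varrho}$, observe that the vanishing moments of the annular cutoff convert each shell $T_{j,k}$ into a PDO of the same order with seminorms $O(2^{-kN})$, and then feed every piece into Lemma~\ref{LL} with the single exponent $q=2Np/(Np-n)$ chosen so that $-n/q$ is exactly the target exponent $\tfrac{n}{2}(\tfrac{n}{Np}-1)$. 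The paper's approach is more elementary (one cutoff, only the $q=2$ case of Lemma~\ref{LL}, no symbol calculus for the spatial truncation); yours is more uniform---a single mechanism handles both the core and the tail, and $T_{j}$ and $T^{*}_{j}$ are treated completely symmetrically via the two halves of Lemma~\ref{LL}, whereas the paper leaves the $T^{*}_{j}$ case implicit. The only points that deserve a little more care in a full write-up are the ``negligible tails'' of the $\xi$-mollification (the convolved symbol is not literally compactly supported in $\xi$, though the rapid decay of $\widehat\phi$ makes the contribution from $|\eta|\gtrsim 2^{j}$ harmless) and the fact that the implied constant in Lemma~\ref{LL} depends only on finitely many seminorms---both standard, and you flag them.
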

\begin{rem}\label{R1}
If $\varrho=0$, the condition $l^{-1}\leq2^{j}\leq l^{-\frac{1}{\varrho}}$ is interpreted as $l^{-1}\leq2^{j}$. If $\varrho=1$, this lemma is no use.
\end{rem}

\begin{proof}
Notice that $a(x,\xi) \psi(2^{-j}\xi)\in S^{-n(\frac{1}{p}-\frac{1}{2})-\frac{n}{2}\max\{\delta-\varrho,0\}}_{\varrho, \delta}$ with the bounds $\lesssim2^{-j\frac{n}{p}(1-\varrho)+n(\frac{1}{p}-\frac{1}{2})}$. So $T_{j}$ is bounded from $L^{p}$ to $L^{2}$, see Lemma \ref{LL}. More exactly, we have
$$\|T_{j}u\|_{L^{2}}\lesssim2^{-j\frac{n}{p}(1-\varrho)+n(\frac{1}{p}-\frac{1}{2})}\|u\|_{L^{p}}.$$
Let integral $N$ defined as above and
set
$$T=l^{\frac{n}{2N}}2^{j(\frac{n}{2N}-\varrho)}$$
\begin{eqnarray}\label{ui}
u_{1}(x)=u(x)\chi_{Q(x_{0},4T)}(x) \quad{\rm and} \quad u_{2}(x)=u(x)-u_{1}(x),
\end{eqnarray}
where $\chi_{Q(x_{0},4T)}(x)$ is the characteristic function of the ball $Q(x_{0},4T).$ Then the left hand of (\ref{u00}) can be bounded by
\begin{eqnarray*}
\int_{Q(x_{0},l)}|T_{j}u_{1}(x)|dx+
\int_{Q(x_{0},l)}|T_{j}u_{2}(x)|dx=:M_{1}+M_{2}.
\end{eqnarray*}
H\"{o}lder's inequality and $(p,2)$-boundedness of $T_{j}$ imply that $M_{1}$ is bounded by
\begin{eqnarray}\label{b11}
l^{\frac{n}{2}}\|T_{j}u_{1}\|_{L^{2}}\nonumber
&\lesssim& 2^{-j\frac{n}{p}(1-\varrho)+n(\frac{1}{p}-\frac{1}{2})}l^{\frac{n}{2}}\|u_{1}\|_{L^{p}}\\
&\lesssim&2^{j\frac{n}{2}(\frac{n}{Np}-1)}l^{\frac{n}{2}(\frac{n}{Np}+1)}M_{p}u(x_{0}).
\end{eqnarray}

For $M_{2}$, noticing that for any $x\in Q(x_{0},l)$ and any $y\in Q^{C}(x_{0},4T)$, we have
$$|y-x|\geq\frac{|y-x_{0}|}{2}.$$
H\"{o}lder's inequality, Integrating by parts  and Parseval's identity give that $|T_{j}u_{2}(x)|$ is bounded by
\begin{eqnarray*}
&&\big(\int_{|y-x_{0}|>4T}\frac{|u(y)|^{p}}{|y-x_{0}|^{pN}}dy\big)^{\frac{1}{p}}
\big(\int_{|y-x_{0}|>4T}|y-x_{0}|^{p'N}|\int_{\mathbb{R}^n} e^{ i \langle x-y,\xi\rangle}a(x,\xi)\psi(2^{-j}\xi) d\xi|^{p'}dy\big)^{\frac{1}{p'}}\nonumber\\
&\lesssim&\big(\int_{|y-x_{0}|>4T}\frac{|u(y)|^{p}}{|y-x_{0}|^{pN}}dy\big)^{\frac{1}{p}}\big(\int_{\mathbb{R}^n} |\partial^{\alpha}_{\xi}a( x,\xi)\psi(2^{-j}\xi)|^{p} d\xi\big)^{\frac{1}{p}}\nonumber\\
&\lesssim&2^{j\frac{n}{2}(\frac{n}{Np}-1)}l^{\frac{n}{2}(\frac{n}{Np}-1)}M_{p}u(x_{0}).
\end{eqnarray*}
 So
\begin{eqnarray}\label{b20}
M_{2}=\int_{Q(x_{0},l)}|T_{j}u_{2}(x)|dx\lesssim
2^{j\frac{n}{2}(\frac{n}{Np}-1)}l^{\frac{n}{2}(\frac{n}{Np}+1)}M_{p}u(x_{0}).
\end{eqnarray}
Thus, the desired estimate (\ref{u00}) follows from (\ref{b11}) and (\ref{b20}).
So we complete the proof.
\end{proof}

\begin{lem}\label{L7}
Let $Q(x_{0},l)$ be a fixed cube with side length $l<1$. Suppose $0<\varrho<\delta<1$, $a\in S^{-\frac{n}{p}(1-\varrho)}_{\varrho,\delta}$. Then for any $1\leq\lambda\leq\frac{1}{\varrho}$, any positive integer $N>\frac{n}{p}$ and any positive integer $j$ with $l^{-\lambda}\leq2^{j}\leq l^{-\frac{1}{\varrho}}$, we have
\begin{eqnarray*}
\frac{1}{|Q|}\int_{Q(x_{0},l)}|T_{j}u(x)|dx
&\lesssim&\big(2^{j\delta}l^{\lambda}+2^{j\frac{n}{2}(\frac{n}{Np}-1)}l^{\frac{n\lambda}{2}(\frac{n}{Np}-1)}\big)M_{p}u(x_{0})
\end{eqnarray*}
\end{lem}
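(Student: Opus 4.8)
\textbf{Proof plan for Lemma \ref{L7}.}
The plan is to follow the same circle of ideas used in Lemma \ref{L9}, splitting the contribution of $T_{j}u$ into a ``near'' and a ``far'' piece, but now the gap between $2^{j}$ and $l^{-1}$ is measured by the exponent $\lambda\in[1,\tfrac1\varrho]$ rather than by a fixed power, and the symbol only carries the order $-\tfrac np(1-\varrho)$ (no extra $\delta-\varrho$ saving). First I would record that $a(x,\xi)\psi(2^{-j}\xi)\in S^{-n(\frac1p-\frac12)}_{\varrho,\delta}$ with bounds $\lesssim 2^{-j\frac np(1-\varrho)+n(\frac1p-\frac12)}$, so that Lemma \ref{LL} (in the form $L^{p}\to L^{2}$ with $q=2$, using that $0<\varrho<\delta<1$ forces the $\max\{\delta-\varrho,0\}$ term to appear) gives
\[
\|T_{j}u\|_{L^{2}}\lesssim 2^{-j\frac np(1-\varrho)+n(\frac1p-\frac12)+\frac{n}{2}(\delta-\varrho)}\,\|u\|_{L^{p}}.
\]
Here is where the $2^{j\delta}$ in the conclusion will come from: the loss $2^{j\frac n2(\delta-\varrho)}$ from the rough quantization must be absorbed, and combined with the radius $T$ of the cutoff it produces the term $2^{j\delta}l^{\lambda}$.

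Next I would choose the truncation radius adapted to $\lambda$, namely something of the shape $T= l^{\lambda n/(2N)}2^{j(n/(2N)-\varrho)}$ (the analogue of the $T$ in Lemma \ref{L9} with $l$ replaced by $l^{\lambda}$ in the exponent), write $u=u_{1}+u_{2}$ with $u_{1}=u\chi_{Q(x_{0},4T)}$, and treat $M_{1}=\int_{Q(x_{0},l)}|T_{j}u_{1}|$ by H\"older and the $(p,2)$ bound just displayed; this yields a term of size $2^{j\delta}l^{\lambda}M_{p}u(x_{0})$ after plugging in $T$ and $\|u_1\|_{L^p}\lesssim T^{n/p}M_pu(x_0)$. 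For the far piece $M_{2}=\int_{Q(x_{0},l)}|T_{j}u_{2}|$ I would use, exactly as in Lemma \ref{L9}, that $|y-x|\gtrsim|y-x_{0}|$ for $x\in Q(x_{0},l)$, $y\notin Q(x_{0},4T)$, integrate by parts $N$ times in $\xi$, apply H\"older and Parseval, and bound $\int|\partial^{\alpha}_{\xi}(a(x,\xi)\psi(2^{-j}\xi))|^{p}d\xi$ using $N>\frac np$; the resulting kernel bound combined with $\big(\int_{|y-x_0|>4T}|u(y)|^p|y-x_0|^{-pN}dy\big)^{1/p}\lesssim T^{n/p-N}M_pu(x_0)$ produces the second term $2^{j\frac n2(\frac{n}{Np}-1)}l^{\frac{n\lambda}{2}(\frac{n}{Np}-1)}M_{p}u(x_{0})$. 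Summing $M_1$ and $M_2$ and dividing by $|Q|\sim l^{n}$ gives the claimed estimate.

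The main obstacle, and the place deserving the most care, is the bookkeeping of exponents: one must verify that with the chosen $T$ the ``near'' term really collapses to $2^{j\delta}l^{\lambda}$ and not something larger, which requires using the constraint $2^{j}\le l^{-1/\varrho}$ (so that $2^{-j\varrho}\ge l$) to control the $2^{-j\frac np(1-\varrho)}$ factor against powers of $l$, and simultaneously using $2^{j}\ge l^{-\lambda}$ to make the far term genuinely small; the interplay of these two one-sided bounds on $2^{j}$ with the free parameter $\lambda$ is the only delicate point, everything else being the standard Chanillo--Torchinsky argument already invoked for Lemma \ref{L8} and Lemma \ref{L9}. I would also double-check that the constant in Lemma \ref{LL} is independent of $j$ after the rescaling $b_j(y,\xi)=a(y,\xi)\psi(2^{-j}\xi)|\xi|^{n(\frac1p-\frac12)}$, which is immediate from the symbol estimates since $\psi(2^{-j}\xi)$ is supported where $|\xi|\sim 2^{j}$.
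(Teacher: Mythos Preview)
Your far-piece argument for $M_{2}$ is fine and does recover the second term $2^{j\frac{n}{2}(\frac{n}{Np}-1)}l^{\frac{n\lambda}{2}(\frac{n}{Np}-1)}M_{p}u(x_{0})$, since the integration by parts in $\xi$ uses only pointwise symbol bounds and incurs no $(\delta-\varrho)$ loss. The gap is in the near piece. If you feed $T_{j}$ directly into Lemma~\ref{LL} you pay the $2^{j\frac{n}{2}(\delta-\varrho)}$ penalty, and with your choice of $T=l^{\lambda n/(2N)}2^{j(n/(2N)-\varrho)}$ the near term works out to
\[
\frac{M_{1}}{|Q|}\lesssim l^{\frac{n}{2}(\frac{\lambda n}{Np}-1)}\,2^{j\frac{n}{2}(\frac{n}{Np}-1+\delta-\varrho)}M_{p}u(x_{0}),
\]
which is \emph{not} dominated by $2^{j\delta}l^{\lambda}$ (nor by the second target term) for $\lambda$ near $1$: taking $N$ large and using only $l^{-\lambda}\le 2^{j}\le l^{-1/\varrho}$, the ratio to $2^{j\delta}l^{\lambda}$ is bounded below by a negative power of $l$, namely $l^{-(1-\delta)-\frac{n}{2}(\delta-\varrho)}$ when $\lambda=1$. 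So the loss cannot be ``absorbed'' as you claim, and the bookkeeping you flag as the delicate point actually breaks.

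What the paper does instead is the key missing idea: cover $Q(x_{0},l)$ by roughly $l^{n(1-\lambda)}$ sub-cubes $Q(x_{i},l^{\lambda})$ and \emph{freeze} the symbol at the centers, setting $T_{j,i}u(x)=\int e^{i\langle x,\xi\rangle}a(x_{i},\xi)\psi(2^{-j}\xi)\hat u(\xi)\,d\xi$. Since $a(x_{i},\xi)\psi(2^{-j}\xi)\in S^{-n(\frac{1}{p}-\frac{1}{2})}_{\varrho,0}$ (the $x$-variable is gone, so effectively $\delta=0$), the $(L^{p},L^{2})$ bound from Lemma~\ref{LL} applies to $T_{j,i}$ with \emph{no} $(\delta-\varrho)$ loss, and the Lemma~\ref{L9} computation run on the cube $Q(x_{i},l^{\lambda})$ produces the second term. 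The first term $2^{j\delta}l^{\lambda}$ comes from the difference $T_{j}u-T_{j,i}u$, controlled via the mean value theorem by $|x-x_{i}|\sup|\partial_{x}a_{j}|\lesssim l^{\lambda}2^{j\delta}$ (followed by the same argument as for \eqref{E012}). Without this freezing step the lemma does not go through.
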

\begin{proof}
If $1<\lambda\leq\frac{1}{\varrho}$, then $l^{\lambda}<l$. Take integer $L$ such that it is the first number no less than  $l^{1-\lambda}$, that is $L-1<l^{1-\lambda}\leq L$. Then there are $L^{n}$ cubes with the same side length $l^{\lambda}$ covering $Q(x_{0},l)$. Moreover, we have
$$Q(x_{0},l)\subset\cup_{i=1}^{L^{n}}Q(x_{i},l^{\lambda})\subset Q(x_{0},2l).$$
Clearly, $L^{n}\leq2^{n}l^{n(1-\lambda)}$. Denote
\begin{eqnarray}\label{0A}
T_{j,i}u(x)
&=&\int_{\mathbb{R}^n} e^{ i \langle x,\xi\rangle}a(x_{i},\xi) \psi(2^{-j}\xi) \hat{u}(\xi)d\xi.
\end{eqnarray}
We write
\begin{eqnarray}\label{A}
&&\frac{1}{|Q|}\int_{Q(x_{0},l)}|T_{j}u(x)|dx\nonumber\\
&\leq&\frac{1}{|Q|}\sum\limits_{i=1}^{L^{n}}\bigg(\int_{Q(x_{i},l^{\lambda})}|T_{j}u(x)-T_{j,i}u(x)|dx+\int_{Q(x_{i},l^{\lambda})}|T_{j,i}u(x)|dx\bigg).
\end{eqnarray}
Now we claim that
\begin{eqnarray}\label{a}
|T_{j}u( x)-T_{j,i}u( x)|\lesssim|x-x_{i}|2^{j\delta}M_{p}u(x_{0}),
\end{eqnarray}
\begin{eqnarray}\label{b}
\int_{Q(x_{i},l^{\lambda})}|T_{j,i}u(x)|dx
&\lesssim&2^{j\frac{n}{2}(\frac{n}{Np}-1)}l^{\frac{n\lambda}{2}(\frac{n}{Np}+1)}M_{p}u(x_{0}).
\end{eqnarray}
Since $L^{n}\leq2^{n}l^{n(1-\lambda)}$, we can get the desired estimate by substituting both (\ref{a}) and (\ref{b}) into (\ref{A}).

Note that $|T_{j}u( x)-T_{j,i}u( x)|$ is bounded by
\begin{eqnarray*}
\int_{\mathbb{R}^n}|u(y)||\int_{\mathbb{R}^n} e^{ i \langle x-y,\xi\rangle}\big(a(x,\xi)-a(x_{i},\xi)\big)\psi(2^{-j}\xi) d\xi|dy.
\end{eqnarray*}
Then, \eqref{a} follows from the same argument as \eqref{E12}.

Now, we prove (\ref{b}).For fixed $x_{i}$, we can see that $a(x_{i},\xi) \psi(2^{-j}\xi)\in S^{-n(\frac{1}{p}-\frac{1}{2})}_{\varrho, 0}$ with the bounds $\lesssim2^{-j\frac{n}{p}(1-\varrho)+n(\frac{1}{p}-\frac{1}{2})}$. So $T_{j,i}$ is bounded from $L^{p}$ to $L^{2}$. More exactly, we have
$$\|T_{j,i}u\|_{L^{2}}\lesssim2^{-j\frac{n}{p}(1-\varrho)+n(\frac{1}{p}-\frac{1}{2})}\|u\|_{L^{p}}.$$ Fix positive integral $N$ large enough and
set
$$T=l^{\frac{n\lambda}{2N}}2^{j(\frac{n}{2N}-\varrho)}$$
\begin{eqnarray}\label{ui}
u_{i,1}(x)=u(x)\chi_{Q(x_{i},4T)}(x) \quad{\rm and} \quad u_{i,2}(x)=u(x)-u_{i,1}(x),
\end{eqnarray}
where $\chi_{Q(x_{i},4T)}(x)$ is the characteristic function of the ball $Q(x_{i},4T).$ Then (\ref{b}) follows from the same argument as \eqref{u00}.

If $\lambda=1$, we define
\begin{eqnarray}
T_{j,0}u(x)
&=&\int_{\mathbb{R}^n} e^{ i \langle x,\xi\rangle}a(x_{0},\xi) \psi(2^{-j}\xi) \hat{u}(\xi)d\xi.
\end{eqnarray}
Then the desired estimate can be got by the same argument as above with $T_{j,i}u$ replaced by $T_{j,0}u$. So we complete the proof.
\end{proof}

We remark that the same result holds for the case $\varrho=0$.  Here, the range of $\lambda$ can be extended to $[1,\infty)$.
However, to make some sums convergent, $\lambda$ has to be confined to a finite range.
\begin{lem}\label{La0}
Let $Q(x_{0},l)$ be a fixed cube with side length $l<1$. Suppose $\varrho= 0$, $0<\delta<1$, $a\in S^{-\frac{n}{p}}_{0,\delta}$, then for any $1\leq\lambda\leq\frac{2}{p(1-\delta)}$, any positive integer $N>\frac{n}{p}$ and any positive integer $j$ with $l^{-\lambda}\leq2^{j}\leq l^{-\frac{2}{p(1-\delta)}}$,
\begin{eqnarray*}
\frac{1}{|Q|}\int_{Q(x_{0},l)}|T_{j}u(x)|dx
&\lesssim&\big(2^{j\delta}l^{\lambda}+2^{j\frac{n}{2}(\frac{n}{Np}-1)}l^{\frac{n\lambda}{2}(\frac{n}{Np}-1)}\big)M_{p}u(x_{0}).
\end{eqnarray*}
\end{lem}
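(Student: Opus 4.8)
The plan is to mimic the proof of Lemma \ref{L7} line-by-line, checking that the only place the hypothesis $\varrho>0$ was used — namely in bounding the number of small cubes and in the $(p,2)$-mapping estimate for $T_{j,i}$ — survives when $\varrho=0$, provided $\lambda$ is kept in the stated finite range. First I would set, exactly as before, an integer $L$ with $L-1<l^{1-\lambda}\leq L$, cover $Q(x_0,l)$ by $L^n\leq 2^n l^{n(1-\lambda)}$ cubes $Q(x_i,l^\lambda)$ contained in $Q(x_0,2l)$, and introduce the frozen operators $T_{j,i}u(x)=\int e^{i\langle x,\xi\rangle}a(x_i,\xi)\psi(2^{-j}\xi)\hat u(\xi)\,d\xi$. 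Then I split
\[
\frac{1}{|Q|}\int_{Q(x_0,l)}|T_ju(x)|\,dx\leq\frac{1}{|Q|}\sum_{i=1}^{L^n}\Big(\int_{Q(x_i,l^\lambda)}|T_ju-T_{j,i}u|+\int_{Q(x_i,l^\lambda)}|T_{j,i}u|\Big),
\]
and claim the two bounds $|T_ju(x)-T_{j,i}u(x)|\lesssim |x-x_i|\,2^{j\delta}M_pu(x_0)$ and $\int_{Q(x_i,l^\lambda)}|T_{j,i}u|\lesssim 2^{j\frac n2(\frac{n}{Np}-1)}l^{\frac{n\lambda}{2}(\frac{n}{Np}+1)}M_pu(x_0)$; summing these over $i$ with $L^n\le 2^nl^{n(1-\lambda)}$ and using $|x-x_i|\lesssim l^\lambda$ gives the claim.

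The first claimed bound follows verbatim from the argument proving \eqref{E12}/\eqref{a}: $a(x,\xi)-a(x_i,\xi)$ is controlled by $|x-x_i|\sup_x|\partial_x a|$, which via $\delta|\beta|$-loss gives the factor $2^{j\delta}$ times a symbol in the same order class, and then Hölder plus the (already available) $(p,p')$-type estimate as in Lemma \ref{L8} finishes it; note that this step never used $\varrho>0$. For the second claimed bound, the key observation is that $a(x_i,\xi)\psi(2^{-j}\xi)$, now with $\varrho=0$, lies in $S^{-n(\frac1p-\frac12)}_{0,0}$ (the $x$-variable being frozen removes the $\delta$-loss entirely) with symbol bounds $\lesssim 2^{-j\frac np+ n(\frac1p-\frac12)}=2^{-\frac{n}{2}j\cdot\frac{?}{}}$; hence by Lemma \ref{LL} (with $q=2$, and $\max\{\delta-\varrho,0\}=0$ irrelevant here since the frozen symbol is in class $(0,0)$) the operator $T_{j,i}$ maps $L^p\to L^2$ with $\|T_{j,i}u\|_{L^2}\lesssim 2^{-j\frac np+n(\frac1p-\frac12)}\|u\|_{L^p}$. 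Then I run the same near/far decomposition as in Lemma \ref{L9}: write $u=u_{i,1}+u_{i,2}$ with $u_{i,1}=u\chi_{Q(x_i,4T)}$, $T=l^{\frac{n\lambda}{2N}}2^{j(\frac{n}{2N})}$ (note the $\varrho$ in the exponent of $T$ in Lemma \ref{L7} is simply dropped), handle $u_{i,1}$ by Hölder plus the $(p,2)$ bound, and handle $u_{i,2}$ by integration by parts in $\xi$ ($N$ times, $N>n/p$) together with Parseval, exactly as in the proof of \eqref{u00}. The case $\lambda=1$ is treated identically with $T_{j,i}$ replaced by the single frozen operator $T_{j,0}$ centered at $x_0$.

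The step I expect to be the genuine constraint — rather than a real obstacle — is the role of the upper bound on $\lambda$. When $\varrho=0$ the "automatic" cutoff scale is no longer $l^{-1/\varrho}$; instead one must ensure that the geometric sums in $j$ that arise when this lemma is fed into the proof of Theorem \ref{TH3} actually converge, and a short computation (matching the two competing terms $2^{j\delta}l^\lambda$ and $2^{j\frac n2(\frac{n}{Np}-1)}l^{\frac{n\lambda}{2}(\frac{n}{Np}-1)}$ after letting $N\to\infty$, so that the second exponent tends to $2^{-\frac{n}{2}j}l^{-\frac{n\lambda}{2}}$) shows the right endpoint is $2^j\leq l^{-\frac{2}{p(1-\delta)}}$, i.e. $\lambda\leq\frac{2}{p(1-\delta)}$. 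So concretely I would: (i) re-derive the frozen-symbol class membership with $\varrho=0$, (ii) copy the proofs of \eqref{a} and \eqref{b} replacing every $2^{-j\varrho}$ by $2^{0}$ and every $l^{-1/\varrho}$ threshold by $l^{-2/(p(1-\delta))}$, and (iii) verify the arithmetic of the exponents closes inside the stated range of $\lambda$. No new idea beyond Lemmas \ref{LL}, \ref{L7}, \ref{L8}, \ref{L9} is needed; the content is the bookkeeping of exponents in the $\varrho=0$ degenerate case.
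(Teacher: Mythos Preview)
Your proposal is correct and matches the paper's own treatment: the paper does not give a separate proof of Lemma~\ref{La0} but simply remarks, immediately after Lemma~\ref{L7}, that ``the same result holds for the case $\varrho=0$'' with $\lambda$ allowed in $[1,\infty)$, and that the finite upper bound $\lambda\le\frac{2}{p(1-\delta)}$ is imposed only so that the resulting sums in the proof of Theorem~\ref{TH2} converge. Your plan---repeat the covering-by-small-cubes argument, freeze the symbol so the $\delta$-loss disappears, apply the $(p,2)$ bound from Lemma~\ref{LL}, and run the near/far split with $T=l^{\frac{n\lambda}{2N}}2^{j\frac{n}{2N}}$---is exactly this carry-over spelled out; no step requires $\varrho>0$. (One cosmetic slip: the expression ``$2^{-\frac{n}{2}j\cdot\frac{?}{}}$'' is unfinished; the intended bound is $2^{-j\frac{n}{p}+jn(\frac1p-\frac12)}=2^{-j\frac{n}{2}}$.)
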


\begin{lem}\label{La00}
Let $Q(x_{0},l)$ be a fixed cube with side length $l<1$. Suppose $1<p\leq2$, $\varrho= 0$, $0\leq\delta<1$, $a\in S^{-\frac{n}{p}}_{0,\delta}$, then for any positive integer $N>\frac{n}{p}$ and any positive integer $j$ with $l^{-\frac{2}{p(1-\delta)}}\leq2^{j}$,
\begin{eqnarray*}
\frac{1}{|Q|}\int_{Q(x_{0},l)}|T_{j}u(x)|dx
&\lesssim&2^{-j\frac{n}{2}(1-\delta)(1-\frac{n}{pN})}l^{-\frac{n}{p}(1-\frac{n}{pN})}M_{p}u(x_{0}).
\end{eqnarray*}
\end{lem}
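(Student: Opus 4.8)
The plan is to run the argument used for Lemma~\ref{L9}, but now in the regime $2^{j}\ge l^{-2/(p(1-\delta))}$ and for the class $S^{-n/p}_{0,\delta}$, which forces two adjustments to its basic inputs. The first is the $(p,2)$ bound for $T_{j}$: since $\varrho=0$ and $a\in S^{-n/p}_{0,\delta}$, on the support of $\psi(2^{-j}\cdot)$ one has $\langle\xi\rangle\sim2^{j}$, so $a(x,\xi)\psi(2^{-j}\xi)\in S^{-n(1/p-1/2)-\frac{n}{2}\delta}_{0,\delta}$ with seminorms $\lesssim2^{-j\frac{n}{2}(1-\delta)}$ (the gain in the exponent being $-\frac{n}{p}+n(\frac1p-\frac12)+\frac{n}{2}\delta=-\frac{n}{2}(1-\delta)$), and Lemma~\ref{LL} with $q=2$ then yields $\|T_{j}u\|_{L^{2}}\lesssim2^{-j\frac{n}{2}(1-\delta)}\|u\|_{L^{p}}$. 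The second is the $\xi$-side estimate: because $\varrho=0$, differentiating $a$ in $\xi$ gains no decay and derivatives falling on $\psi(2^{-j}\cdot)$ only help, so $|\partial^{\alpha}_{\xi}(a(x,\xi)\psi(2^{-j}\xi))|\lesssim2^{-jn/p}$ on $|\xi|\sim2^{j}$ for every multi-index $\alpha$, whence $\|\partial^{\alpha}_{\xi}(a(x,\cdot)\psi(2^{-j}\cdot))\|_{L^{p}}\lesssim2^{-jn/p}(2^{jn})^{1/p}=1$.

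With these in hand I would set $T=l^{\frac{n}{pN}}2^{\frac{jn(1-\delta)}{2N}}$ and split $u=u_{1}+u_{2}$ with $u_{1}=u\chi_{Q(x_{0},4T)}$, exactly as in the proof of Lemma~\ref{L9}. The hypothesis $2^{j}\ge l^{-2/(p(1-\delta))}$ is precisely what makes $T\gtrsim l$, so that $|y-x|\gtrsim|y-x_{0}|$ whenever $x\in Q(x_{0},l)$ and $y\notin Q(x_{0},4T)$. For the local part, H\"{o}lder's inequality, the $(p,2)$ bound above, and $\|u_{1}\|_{L^{p}}\lesssim T^{n/p}M_{p}u(x_{0})$ give
\[
\frac{1}{|Q|}\int_{Q(x_{0},l)}|T_{j}u_{1}(x)|\,dx\lesssim l^{-n/2}2^{-j\frac{n}{2}(1-\delta)}T^{n/p}M_{p}u(x_{0}),
\]
and substituting $T$ and using $p\le2$ (so that $l^{-n/2}\le l^{-n/p}$) bounds the right-hand side by $2^{-j\frac{n}{2}(1-\delta)(1-\frac{n}{pN})}l^{-\frac{n}{p}(1-\frac{n}{pN})}M_{p}u(x_{0})$. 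For the global part, I would insert $1=|y-x_{0}|^{N}|y-x_{0}|^{-N}$ and apply H\"{o}lder in $y$ with exponents $(p,p')$; a dyadic decomposition of $\{|y-x_{0}|>4T\}$ together with $N>n/p$ bounds the first factor by $\lesssim T^{n/p-N}M_{p}u(x_{0})$, while for the second factor one replaces $|y-x_{0}|$ by $|y-x|$, integrates by parts $N$ times in $\xi$ (no boundary terms, by the compact $\xi$-support), and uses the Hausdorff--Young inequality (valid since $1<p\le2$) to bound the $L^{p'}_{y}$-norm by $\sum_{|\alpha|=N}\|\partial^{\alpha}_{\xi}(a(x,\cdot)\psi(2^{-j}\cdot))\|_{L^{p}}\lesssim1$. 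Hence
\[
\frac{1}{|Q|}\int_{Q(x_{0},l)}|T_{j}u_{2}(x)|\,dx\lesssim T^{n/p-N}M_{p}u(x_{0})=2^{-j\frac{n}{2}(1-\delta)(1-\frac{n}{pN})}l^{-\frac{n}{p}(1-\frac{n}{pN})}M_{p}u(x_{0}),
\]
and adding the two contributions gives the assertion.

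The step I expect to be the main obstacle is the global estimate for $u_{2}$: one must verify that the single hypothesis $2^{j}\ge l^{-2/(p(1-\delta))}$ really does force $T\gtrsim l$ --- without this the separation $|y-x|\gtrsim|y-x_{0}|$ fails and the $N$-fold integration by parts produces nothing usable --- and one must reconcile oneself to the fact that, since $\varrho=0$, the $\xi$-integral carries no $2^{j}$-decay at all (it only gives $O(1)$), so that all the decay in the final bound has to be manufactured through the calibrated radius $T$ via the factor $T^{n/p-N}$. Everything else is the routine bookkeeping already carried out in Lemmas~\ref{L8} and~\ref{L9}.
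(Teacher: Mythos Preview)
Your proof is correct and follows essentially the same route as the paper: split $u$ at a calibrated radius, handle the near part via H\"{o}lder and the $(L^{p},L^{2})$ bound for $T_{j}$ (the paper phrases this as ``the $L^{p}$-estimate of $T_{j}$''), and handle the far part via $N$-fold integration by parts in $\xi$ together with Hausdorff--Young. Your splitting radius $T=l^{\frac{n}{pN}}2^{\frac{jn(1-\delta)}{2N}}$ differs slightly from the paper's $\Gamma=l^{\frac{n}{pN}}2^{\frac{jn(1-\delta)}{pN}}$, but your choice is the one that balances the two pieces to produce exactly the stated exponent $2^{-j\frac{n}{2}(1-\delta)(1-\frac{n}{pN})}$, and your verification that $T\gtrsim l$ under the hypothesis $2^{j}\geq l^{-2/(p(1-\delta))}$ is correct.
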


\begin{proof}
Denote
$$\Gamma=2^{j\frac{n}{pN}(1-\delta)}l^{\frac{n}{pN}}.$$
Set $u_{3}(x)=u(x)\chi_{Q(x_{0},2\Gamma)}(x)$ and $u_{4}(x)=u(x)-u_{3}(x)$.
Then
\begin{eqnarray}\label{Ea}
\frac{1}{|Q|}\int_{Q(x_{0},l)}|T_{j}u(x)|dx
\leq\frac{1}{|Q|}\int_{Q(x_{0},l)}|T_{j}u_{3}(x)|dx+
\frac{1}{|Q|}\int_{Q(x_{0},l)}|T_{j}u_{4}(x)|dx.
\end{eqnarray}

Notice that $a(x,\xi) \psi(2^{-j}\xi)\in S^{-n(\frac{1}{p}-\frac{1}{2})-\frac{n}{2}\delta}_{\varrho,\delta}$ with bounds $\lesssim 2^{-j\frac{n}{2}(1-\delta)}$. H\"{o}lder's inequality and the $L^{p}$-estimate of $T_{j}$ give that
\begin{eqnarray}\label{E10}
\frac{1}{|Q|}\int_{Q}|T_{j}u_{3}(x)|dx
&\lesssim& 2^{-j\frac{n}{2}(1-\delta)}l^{-\frac{n}{p}}\|u_{1}\|_{L^p}\nonumber
\lesssim 2^{-j\frac{n}{2}(1-\delta)}l^{-\frac{n}{p}}\Gamma^{\frac{n}{p}}M_{p}u(x_{0})\\
&=&2^{-j\frac{n}{2}(1-\delta)(1-\frac{n}{pN})}l^{-\frac{n}{p}(1-\frac{n}{pN})}M_{p}u(x_{0}).
\end{eqnarray}

Notice that $\Gamma>l$. We have $|y-x|\sim|y-x_{0}|$ for $\forall x\in Q(x_{0},l)$ and $\forall y\in Q^{C}(x_{0},2\Gamma)$. So direct computations show that
\begin{eqnarray*}
|T_{j}u_{4}(x)|
&\leq&\int_{|y-x_{0}|\geq 2\Gamma}|K_{j}(x,x-y)||u(y)|dy
\lesssim \Gamma^{(\frac{n}{p}-N)} M_{p}u(x_{0})\\
&=&2^{-j\frac{n}{2}(1-\delta)(1-\frac{n}{pN})}l^{-\frac{n}{p}(1-\frac{n}{pN})}M_{p}u(x_{0}),
\end{eqnarray*}
which implies that
\begin{eqnarray}\label{E11}
\frac{1}{|Q|}\int_{Q(x_{0},l)}|T_{j}u_{4}(x)|dx
\lesssim 2^{-j\frac{n}{2}(1-\delta)(1-\frac{n}{pN})}l^{-\frac{n}{p}(1-\frac{n}{pN})}M_{p}u(x_{0}).
\end{eqnarray}
Clearly, the desired estimate follows from \eqref{Ea}, \eqref{E10} and \eqref{E11}.
\end{proof}

Taking $\Gamma=l$ in the proof Lemma \ref{La00}, we can get a similar result for $\varrho>0$ with the same argument as above.
\begin{lem}\label{L70}
Let $Q(x_{0},l)$ be a fixed cube with side length $l<1$. Suppose  $0<\varrho\leq1$, $0\leq\delta<1$ and $1<p\leq2$. For any positive integer $N>\frac{n}{p}$ and any positive integer $j$ with $l^{-\frac{1}{\varrho}}\leq2^{j}$,
if $a\in S^{-\frac{n}{p}(1-\varrho)}_{\varrho,\delta}$ then
\begin{eqnarray*}
\frac{1}{|Q|}\int_{Q(x_{0},l)}|T_{j}u(x)|dx
&\lesssim&\big(2^{-j(\frac{n}{2}(1-\varrho)-\frac{n}{2}\max\{\delta-\varrho,0\})}+2^{-j\varrho(\frac{n}{p}-N)}l^{\frac{n}{p}-N)}\big)M_{p}u(x_{0})
\end{eqnarray*}
and
\begin{eqnarray*}
\frac{1}{|Q|}\int_{Q(x_{0},l)}|T^{*}_{j}u(x)|dx
&\lesssim&\big(2^{-j(\frac{n}{2}(1-\varrho)-\frac{n}{2}\max\{\delta-\varrho,0\})}+2^{-j\varrho(\frac{n}{p}-N)}l^{\frac{n}{p}-N)}\big)M_{p}u(x_{0});
\end{eqnarray*}
\end{lem}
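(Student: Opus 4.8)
\textbf{Proof proposal for Lemma \ref{L70}.}
The plan is to decompose $Q(x_0,l)$ into the far and near parts exactly as in the proof of Lemma \ref{La00}, but now with the splitting radius set equal to the side length $l$ itself, so that $u_1=u\chi_{Q(x_0,2l)}$ and $u_2=u-u_1$. The point of this choice is that in the regime $l^{-1/\varrho}\le 2^j$ the symbol $a(x,\xi)\psi(2^{-j}\xi)$ belongs to $S^{-n(1/p-1/2)-\frac n2\max\{\delta-\varrho,0\}}_{\varrho,\delta}$ with bounds $\lesssim 2^{-j(\frac n2(1-\varrho)-\frac n2\max\{\delta-\varrho,0\})}$, so the $(L^p,L^2)$ bound for $T_j$ from Lemma \ref{LL} (and its $T^*_j$ analogue) immediately gives, via H\"older on the cube of measure $l^n$, the estimate
\begin{eqnarray*}
\frac{1}{|Q|}\int_{Q(x_0,l)}|T_ju_1(x)|\,dx\lesssim l^{-n/p}2^{-j(\frac n2(1-\varrho)-\frac n2\max\{\delta-\varrho,0\})}\|u_1\|_{L^p}\lesssim 2^{-j(\frac n2(1-\varrho)-\frac n2\max\{\delta-\varrho,0\})}M_pu(x_0),
\end{eqnarray*}
which is the first term in the claimed bound. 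For $T^*_j$ one uses the same argument with the $T^*$ version of Lemma \ref{LL}, which is why the two displayed inequalities in the statement carry identical right-hand sides.

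For the far piece I would estimate pointwise on $x\in Q(x_0,l)$: since $\Gamma:=l$ forces $|y-x|\sim|y-x_0|$ for $y\notin Q(x_0,2l)$, H\"older's inequality together with integration by parts $N$ times in $\xi$ and Parseval (for $T_j$) or the argument used for \eqref{E0120} (for $T^*_j$, where Parseval is unavailable and one routes through Lemma \ref{LL} as in the proof of Lemma \ref{L8}) bounds $|T_ju_2(x)|$ by
\begin{eqnarray*}
\Big(\int_{|y-x_0|>2l}\frac{|u(y)|^p}{|y-x_0|^{pN}}\,dy\Big)^{1/p}\Big(\int_{\mathbb R^n}|\partial^\alpha_\xi(a(x,\xi)\psi(2^{-j}\xi))|^p\,d\xi\Big)^{1/p}.
\end{eqnarray*}
The first factor is $\lesssim l^{n/p-N}M_pu(x_0)$ by breaking the region into dyadic annuli and using $N>n/p$; the second factor, using $|\partial^\alpha_\xi a|\lesssim\langle\xi\rangle^{-\frac np(1-\varrho)-\varrho N}$ on a shell of size $2^j$, is $\lesssim 2^{j(n-\frac np(1-\varrho)-\varrho N)\cdot\frac1p}=2^{-j\varrho(N-\frac np)}$ after a short computation. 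Multiplying gives the second term $2^{-j\varrho(\frac np-N)}l^{\frac np-N}M_pu(x_0)$, and summing with the near piece yields the lemma; the $T^*_j$ case is identical once the Parseval step is replaced by the Lemma \ref{LL}-based detour already spelled out for \eqref{E0120}.

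The main obstacle, as in Lemma \ref{L8}, is the far piece for $T^*_a$: because the symbol in $K^*_j$ is frozen at the output variable $y$ rather than the input, one cannot apply Parseval directly after integrating by parts, and must instead recognize the resulting oscillatory integral as $T^*_{\tilde b_j}\tilde g_j$ for an auxiliary symbol $\tilde b_j\in S^{-n(1/2-1/p')-\frac n2\max\{\delta-\varrho,0\}}_{\varrho,\delta}$ and invoke the $(L^2,L^{p'})$ bound of Lemma \ref{LL}; tracking the $2^j$- and $l$-powers through that normalization is the only place requiring care, and it produces exactly the $\max\{\delta-\varrho,0\}$ loss displayed in the statement. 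Everything else is bookkeeping of the exponents, and the hypothesis $l^{-1/\varrho}\le 2^j$ is used precisely to guarantee $\Gamma=l\ge l^{\lambda}$ is an admissible splitting radius and that the geometric series in $j$ arising when this lemma is later summed will converge.
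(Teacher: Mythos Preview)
Your overall strategy---set $\Gamma=l$, split $u=u_1+u_2$ with $u_1=u\chi_{Q(x_0,2l)}$, use an operator bound for the near piece and integration by parts for the far piece---is exactly the paper's. But the near piece contains a real error. You claim that $a_j:=a(x,\xi)\psi(2^{-j}\xi)$ lies in $S^{-n(1/p-1/2)-\frac{n}{2}\max\{\delta-\varrho,0\}}_{\varrho,\delta}$ with seminorms $\lesssim 2^{-j(\frac{n}{2}(1-\varrho)-\frac{n}{2}\max\{\delta-\varrho,0\})}$. The first assertion is fine, but the seminorm is wrong: since $a\in S^{-\frac{n}{p}(1-\varrho)}$, one gets
\[
\text{bound}\;=\;2^{\,j\bigl(-\frac{n}{p}(1-\varrho)+n(\tfrac1p-\tfrac12)+\frac{n}{2}\max\{\delta-\varrho,0\}\bigr)}
\;=\;2^{\,j\bigl(\frac{n\varrho}{p}-\frac{n}{2}+\frac{n}{2}\max\{\delta-\varrho,0\}\bigr)},
\]
which for $1<p<2$ and $\varrho>0$ is strictly larger than the number you wrote (they coincide only when $p=2$ or $\varrho=0$). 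Moreover, if you really route through Lemma~\ref{LL} (an $(L^p,L^2)$ bound) the H\"older factor on $Q$ is $l^{-n/2}$, not the $l^{-n/p}$ you display. Put together, Lemma~\ref{LL} yields the near-piece bound $l^{n(1/p-1/2)}2^{j(n\varrho/p-n/2+\ldots)}M_pu(x_0)$, which does \emph{not} dominate the first term in the statement and, when summed over $2^j\ge l^{-1/\varrho}$, even diverges for $\varrho\ge p/2$.

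The repair is to use the $L^p\!\to\!L^p$ boundedness (Theorem~\ref{Tz} for $T_a$, its companion for $T^*_a$) instead of Lemma~\ref{LL}. The threshold class there is $S^{-n(1-\varrho)(1/p-1/2)-\frac{n}{2}\max\{\delta-\varrho,0\}}$, and then $a_j$ does carry the seminorm $2^{-j(\frac{n}{2}(1-\varrho)-\frac{n}{2}\max\{\delta-\varrho,0\})}$ you want; the H\"older factor becomes $l^{-n/p}$ and cancels exactly against $\|u_1\|_{L^p}\lesssim l^{n/p}M_pu(x_0)$. This is precisely what the paper does (its template, Lemma~\ref{La00}, invokes ``the $L^p$-estimate of $T_j$''). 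A minor point: in the far piece the passage from the $L^{p'}_y$-norm of the kernel to $\bigl(\int|\partial^\alpha_\xi a_j|^p\,d\xi\bigr)^{1/p}$ is Hausdorff--Young, not Parseval. The rest of your argument, including the $T^*_j$ discussion, is in line with the paper.
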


\begin{proof}[Proof of Theorem \ref{TH2}]

Without loss of generality, we assume that the symbol $a(x,\xi)$ vanishes for $|\xi|\leq 1$. Let $Q=Q(x_{0},l)$ denote the cube centered at $x_{0}$ with the side length $l.$ For any fixed cube $Q$, we are going to prove that
\begin{eqnarray}\label{E0}
\frac{1}{|Q|}\int_{Q}|T_{a}u(x)-C_{Q}|dx\leq C M_{p}u(x_{0}),
\end{eqnarray}
where $C_{Q}=\frac{1}{|Q|}\int_{Q}T_{a}u(y)dy$. The proof is trivial for $l\geq1$, we omit it here. We put our eyes on $0<l<1$. Note that the left hand of (\ref{E0}) can be controlled by
\begin{eqnarray}\label{E1}
\frac{1}{|Q|^{2}}\int_{Q}\int_{Q}|T_{a}u(x)-T_{a}u(y)|dydx.
\end{eqnarray}
We compose the operator $T_{a}$ as (\ref{de}), then estimate (\ref{E1}) by
\begin{eqnarray}\label{E13}
\sum\limits_{1<2^{j}\leq l^{-1}}\frac{1}{|Q|^{2}}\int_{Q}\int_{Q}|T_{j}u(x)-T_{j}u(z)|dzdx
&+&\sum\limits_{l^{-1}<2^{j}}\frac{2}{|Q|}\int_{Q}|T_{j}u(x)|dx.
\end{eqnarray}
Lemma (\ref{L8}) implies that
\begin{eqnarray*}
|T_{j}u(x)-T_{j}u(z)|\leq  \int_{\mathbb{R}^{n}}|u(y)||K_{j}(x,y)-K_{j}(z,y)|dy\leq C 2^{j}|x-z|M_{p}u(x_{0}).
\end{eqnarray*}
So the first term in (\ref{E13}) is bounded by
\begin{eqnarray}
 M_{p}u(x_{0})l\sum\limits_{1<2^{j}\leq l^{-1}}2^{j}\lesssim M_{p}u(x_{0}).
\end{eqnarray}
Next we claim that the second term in (\ref{E13}) can be controlled by $M_{p}u(x_{0})$ as well.

Case 1 $0\leq\delta\leq\varrho\leq1$, $\delta\neq1$
If $\varrho=0$, then by Lemma \ref{L9} and Remark \ref{R1} the second term in (\ref{E13}) can be bounded by
\begin{eqnarray*}
\sum\limits_{l^{-1}<2^{j}}2^{j\frac{n}{2}(\frac{n}{Np}-1)}l^{\frac{n}{2}(\frac{n}{Np}-1)}M_{p}u(x_{0})
\lesssim M_{p}u(x_{0}).
\end{eqnarray*}
If $\varrho\neq0$, we break up this sum as follows
\begin{eqnarray}\label{E12}
\sum\limits_{l^{-1}<2^{j}\leq l^{-\frac{1}{\varrho}}}\frac{2}{|Q|}\int_{Q}|T_{j}u(x)|dx
+\sum\limits_{l^{-\frac{1}{\varrho}}<2^{j}}\frac{2}{|Q|}\int_{Q}|T_{j}u(x)|dx.
\end{eqnarray}
Then Lemma \ref{L9} and Lemma \ref{L70} imply that they can be controlled by
\begin{eqnarray*}
&&\sum\limits_{l^{-1}<2^{j}\leq l^{-\frac{1}{\varrho}}}2^{j\frac{n}{2}(\frac{n}{Np}-1)}l^{\frac{n}{2}(\frac{n}{Np}-1)}M_{p}u(x_{0})\\
&+&\sum\limits_{l^{-\frac{1}{\varrho}}<2^{j}}
\big(2^{-j(\frac{n}{2}(1-\varrho)-\frac{n}{2}\max\{\delta-\varrho,0\})}+2^{-j\varrho(\frac{n}{p}-N)}l^{\frac{n}{p}-N)}\big)M_{p}u(x_{0})
\lesssim M_{p}u(x_{0}).
\end{eqnarray*}
Case 2 $0\leq\varrho<\delta<1$ If $\varrho\neq0$, we break up this sum as (\ref{E12}) as well. By Lemma \ref{L70}, the second term in (\ref{E12}) can be controlled by $M_{p}u(x_{0})$. For the first term in in (\ref{E12}, we write
\begin{eqnarray*}
\sum\limits_{l^{-1}<2^{j}\leq l^{-\frac{1}{\varrho}}}\frac{2}{|Q|}\int_{Q}|T_{j}u(x)|dx
&=&\big(\sum\limits_{l^{-1}<2^{j}\leq l^{-\frac{1}{\delta}}}
+\sum\limits_{l^{-\frac{1}{\delta}}<2^{j}\leq l^{-\frac{1}{\delta^{2}}}}+...
+\sum\limits_{l^{-\frac{1}{\delta^{k-1}}}<2^{j}\leq l^{-\frac{1}{\delta^{k}}}}\\
&+&...
+\sum\limits_{l^{-\frac{1}{\delta^{\gamma-1}}}<2^{j}\leq \min\{l^{-\frac{1}{\varrho}},l^{-\frac{1}{\delta^{\gamma}}}\}}\big)
\frac{1}{|Q|}\int_{Q(x_{0},l)}|T_{j}u(x)|dx,
\end{eqnarray*}
where $\gamma$ is the first positive integer such that $\frac{1}{\delta^{\gamma}}\geq \frac{1}{\varrho}$.
Then take $\lambda=\frac{1}{\delta^{k}}$, $k=0,1,...,\gamma-1$ in Lemma \ref{L7} respectively, we can see that each sum above is bounded by $M_{p}u(x_{0})$. Therefore we have
\begin{eqnarray}
\sum\limits_{l^{-1}<2^{j}\leq l^{-\frac{1}{\varrho}}}\frac{2}{|Q|}\int_{Q}|T_{j}u(x)|dx
\leq C_{\gamma}M_{p}u(x_{0}).
\end{eqnarray}

If $\varrho=0$, we break up this sum as follows
\begin{eqnarray}
\sum\limits_{l^{-1}<2^{j}\leq l^{-\frac{2}{p(1-\delta)}}}\frac{2}{|Q|}\int_{Q}|T_{j}u(x)|dx
+\sum\limits_{l^{-\frac{2}{p(1-\delta)}}<2^{j}}\frac{2}{|Q|}\int_{Q}|T_{j}u(x)|dx.
\end{eqnarray}
Applying Lemma \ref{La0} and Lemma \ref{La00} instead of Lemma \ref{L70} and Lemma \ref{L7}, we can get the desired estimate by the same argument as above. So the proof is finished.

\end{proof}

Next, we started to prepare for proving the case $p=1$, that is, Theorem \ref{TH3} and Theorem \ref{TH5}.

\begin{lem}\label{L3}
Let $Q(x_{0},l)$ be a fixed cube with side length $l<1$. Suppose $0\leq\varrho\leq1$ and $0\leq\delta<1$. For any positive integer $j$ satisfying $2^{j}l<1$, if  $a(x,\xi)\in S^{-n(1-\varrho)}_{\varrho,\delta}$, then
\begin{eqnarray}\label{E2}
\int_{\mathbb{R}^{n}}|u(y)||K_{j}(x,y)-K_{j}(z,y)|dy\lesssim 2^{j}lMu(x_{0}),\quad \forall x,z\in Q(x_{0},l)
\end{eqnarray}
and
\begin{eqnarray}\label{E4}
\int_{\mathbb{R}^{n}}|u(y)||K^{*}_{j}(x,y)-K^{*}_{j}(z,y)|dy\lesssim 2^{j}lMu(x_{0}),\quad \forall x,z\in Q(x_{0},l).
\end{eqnarray}
\end{lem}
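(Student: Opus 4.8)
The plan is to deduce Lemma~\ref{L3} from \emph{pointwise} estimates on the kernel differences, rather than from the $L^{p'}$--duality argument used for \eqref{E0120} in Lemma~\ref{L8}; this shortcut is available precisely because with the order $m=-n(1-\varrho)$ the Littlewood--Paley pieces are $L^1$-normalised. Writing $a_j(x,\xi)=a(x,\xi)\psi(2^{-j}\xi)$, one has $a_j\in S^{-n(1-\varrho)}_{\varrho,\delta}$ with $\xi$-support in $\{|\xi|\sim 2^j\}$, so
\[
|\partial^{\beta}_x\partial^{\alpha}_{\xi}a_j(x,\xi)|\lesssim 2^{j(-n(1-\varrho)-\varrho|\alpha|+\delta|\beta|)},
\]
and an $N$-fold integration by parts in $\xi$ (the support having measure $\sim 2^{jn}$, and $-n(1-\varrho)+n=n\varrho$) gives $|K_j(x,y)|,\,|K^*_j(x,y)|\lesssim 2^{jn\varrho}(1+2^{j\varrho}|x-y|)^{-N}$ for every $N$; in particular these kernels have $L^1(dy)$-norm $\lesssim 1$ uniformly in $j$.

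For \eqref{E2} I would split
\begin{align*}
K_j(x,y)-K_j(z,y)&=\tfrac{1}{(2\pi)^n}\int e^{i\langle x-y,\xi\rangle}\big(a_j(x,\xi)-a_j(z,\xi)\big)\,d\xi\\
&\quad+\tfrac{1}{(2\pi)^n}\int e^{i\langle z-y,\xi\rangle}\big(e^{i\langle x-z,\xi\rangle}-1\big)a_j(z,\xi)\,d\xi.
\end{align*}
Since $2^jl<1$ and $\varrho\leq1$ give $l<2^{-j}\leq 2^{-j\varrho}$, on the support $|e^{i\langle x-z,\xi\rangle}-1|\lesssim|x-z|\,|\xi|\lesssim 2^jl\leq1$ and $|a_j(x,\xi)-a_j(z,\xi)|\lesssim|x-z|\,2^{j\delta}2^{-jn(1-\varrho)}\lesssim 2^jl\,2^{-jn(1-\varrho)}$ (using $\delta\leq1$); regarded as symbols in $\xi$, both $(e^{i\langle x-z,\xi\rangle}-1)a_j(z,\xi)$ and $a_j(x,\xi)-a_j(z,\xi)$ then have all $\xi$-derivatives bounded by $\lesssim 2^jl\,2^{-jn(1-\varrho)}2^{-j\varrho|\alpha|}$, the inequality $l<2^{-j\varrho}$ absorbing any $\xi$-derivative that lands on the oscillatory factor. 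Integrating by parts $N$ times and using $|z-y|\sim|x-y|$ when $|x-y|\gtrsim l$ (with the trivial bound otherwise) yields $|K_j(x,y)-K_j(z,y)|\lesssim 2^jl\,2^{jn\varrho}(1+2^{j\varrho}|x-y|)^{-N}$ for every $N$. For \eqref{E4} the symbol $a_j(y,\xi)$ depends only on the integration variable, so there is no symbol-difference term, and $K^*_j(x,y)-K^*_j(z,y)=\tfrac{1}{(2\pi)^n}\int e^{i\langle z-y,\xi\rangle}(e^{i\langle x-z,\xi\rangle}-1)a_j(y,\xi)\,d\xi$ obeys the same bound by the identical computation (uniformly in $y$); this is also the reason the order needed here is just $-n(1-\varrho)$, with no extra $\tfrac n2\max\{\delta-\varrho,0\}$ as in the $1<p\leq2$ case of Lemma~\ref{L8}.

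To conclude, multiply by $|u(y)|$ and integrate: the standard annular decomposition gives $\int_{\mathbb{R}^n}|u(y)|\,2^{jn\varrho}(1+2^{j\varrho}|x-y|)^{-N}\,dy\lesssim Mu(x)$ for $N>n$, and since $x\in Q(x_0,l)$ with $l<2^{-j\varrho}$, every ball centered at $x$ of radius $\geq 2^{-j\varrho}$ lies in a concentric dilate of the ball about $x_0$, so that integral is in fact $\lesssim Mu(x_0)$. Both \eqref{E2} and \eqref{E4} then follow with the gain $2^jl$. The only points that need care are the bookkeeping verifying that the decay $2^{-jn(1-\varrho)}$ from the order exactly cancels the $L^1$-mass $2^{jn\varrho}\cdot 2^{-jn\varrho}$ of the kernel (so that no $L^2$/Parseval input, and hence no loss of $\tfrac n2\max\{\delta-\varrho,0\}$, is required), and the passage from $Mu(x)$ to $Mu(x_0)$, which relies on $2^{-j\varrho}>l$; I do not anticipate a real obstacle, the estimate being of the same routine type as \eqref{E012}.
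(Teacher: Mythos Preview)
Your proof is correct and follows essentially the same route as the paper's (very terse) argument: both exploit the gain $|e^{i\langle x-z,\xi\rangle}-1|\lesssim 2^jl$ on the support of $a_j$, then use integration by parts in $\xi$ to produce the decay $(1+2^{j\varrho}|x-y|)^{-N}$, and finally integrate against $|u(y)|$ to land on $Mu(x_0)$. The paper organises the last step as a near/far split in $y$ at scale $2^{-j\varrho}$, whereas you package the same information into a single pointwise kernel bound $2^jl\cdot 2^{jn\varrho}(1+2^{j\varrho}|x-y|)^{-N}$ before integrating; these are two presentations of the same computation.

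One cosmetic point: in your Leibniz analysis of $\partial_\xi^\alpha\big[(e^{i\langle x-z,\xi\rangle}-1)a_j\big]$, the terms with $|\beta|\ge1$ derivatives on the exponential contribute $(l\,2^{j\varrho})^{|\beta|}\le l\,2^{j\varrho}\le 2^jl$, which is exactly the bound you claim---but it is worth saying explicitly that this uses $l\,2^{j\varrho}<1$ (not just $l<2^{-j\varrho}$, which is the same thing) so that the higher powers do not grow. Similarly, for the symbol-difference term in \eqref{E2} you use $|x-z|\,2^{j\delta}\lesssim l\,2^{j\delta}\le 2^jl$ via $\delta\le1$; since in fact $\delta<1$ this is even a bit better, but either way the bound holds. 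None of this affects correctness.
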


\begin{proof}
The proof of (\ref{E2})  and (\ref{E4}) is standard. We only show a outline of proving (\ref{E4}).
Integrand of left side of (\ref{E4}) can be bounded by
\begin{eqnarray}\label{E40}
\int_{\mathbb{R}^{n}}|u(y)||\int_{\mathbb{R}^n}\big(e^{i\langle x-y,\xi\rangle}-e^{ i \langle z-y,\xi\rangle}\big)a_{j}(y,\xi)d\xi|dy.
\end{eqnarray}
Break up this integrand as follows
\begin{eqnarray*}
\int_{|y-x_{0}|\leq2^{-j\varrho+1}}
+
\int_{|y-x_{0}|>2^{-j\varrho+1}}.
\end{eqnarray*}
A direct calculation gives the first term is bounded by $2^{j}lMu(x_{0})$, and integration by parts with respect to the variable $\xi$ yields that the second term has the same bound.
Thus the proof is completed.
\end{proof}

\begin{rem}\label{R41}
Notice that the smooth of variable $y$ in $a(y.\xi)$ is not used in the proof of (\ref{E4}). So, it can be get in a relaxed condition. More exactly, (\ref{E4}) can been gotten under condition $a(y,\xi)\in L^{\infty}S^{-n(1-\varrho)}_{\varrho}$.
\end{rem}

\begin{lem}\label{La4}
Let $Q(x_{0},l)$ be a fixed cube with side length $l<1$. Suppose $0<\varrho\leq1$ and $0\leq\delta<1$. For any positive integer $j$ with $l^{-1}\leq2^{j}\leq l^{-\frac{1}{\varrho}}$, if
$a\in S^{-n(1-\varrho)-\frac{n}{2}\max\{\delta-\varrho,0\}}_{\varrho,\delta}$
then
\begin{eqnarray}\label{E5}
\frac{1}{|Q(x_{0},l)|}\int_{Q(x_{0},l)}|T_{j}f(x)|dx
&\lesssim&2^{-j\frac{n}{2}}l^{-\frac{n}{2}}Mf(x_{0});
\end{eqnarray}
if $a\in S^{-n(1-\varrho)}_{\varrho,\delta}$
then
\begin{eqnarray}\label{E7}
\frac{1}{|Q(x_{0},l)|}\int_{Q(x_{0},l)}|T^{*}_{j}f(x)|dx
&\lesssim&2^{-j\frac{n}{2}}l^{-\frac{n}{2}}Mf(x_{0}).
\end{eqnarray}
\end{lem}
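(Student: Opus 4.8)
The plan is to run, for $p=1$, the two–piece argument behind Lemma~\ref{L9}, with $Mf$ in place of $M_{p}f$. Fix $j$ with $l^{-1}\le 2^{j}\le l^{-1/\varrho}$ (so $l\le 2^{-j\varrho}$), fix a large integer $N>n$ (to be sent to $\infty$), put $T=(2^{j}l)^{n/(2N)}2^{-j\varrho}$ (note $2^{-j\varrho}\le T$), and split $f=f_{1}+f_{2}$ with $f_{1}=f\chi_{Q(x_{0},4T)}$. Integrating by parts in $\xi$ in \eqref{K} gives $|K_{j}(x,y)|+|K_{j}^{*}(x,y)|\lesssim_{L}2^{j(m+n)}(1+2^{j\varrho}|x-y|)^{-L}$ for every $L$, with $m$ the order of $a$; only $\xi$–regularity is used, cf.\ Remark~\ref{R41}. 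The claimed bound for each operator will then follow from a ``near'' and a ``far'' estimate.

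For the far piece: if $x\in Q(x_{0},l)$ and $|y-x_{0}|>4T$ then $|y-x_{0}|\sim|y-x|$; writing $|K_{j}(x,y)f(y)|=\big(|y-x_{0}|^{-N}|f(y)|\big)\big(|y-x_{0}|^{N}|K_{j}(x,y)|\big)$ and using H\"older with exponents $(1,\infty)$, dyadic annuli about $x_{0}$ give $\int_{|y-x_{0}|>4T}|y-x_{0}|^{-N}|f(y)|\,dy\lesssim T^{n-N}Mf(x_{0})$ (here $N>n$), while $N$ integrations by parts in $\xi$ give $\sup_{y}|y-x_{0}|^{N}|K_{j}(x,y)|\lesssim 2^{j(m-\varrho N+n)}$, identically for $K_{j}^{*}$. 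Hence $|T_{j}f_{2}(x)|\lesssim T^{n-N}2^{j(m-\varrho N+n)}Mf(x_{0})$; inserting $T$ and using that $m+n(1-\varrho)=-\tfrac n2\max\{\delta-\varrho,0\}\le 0$ for $T_{j}$ (and $=0$ for $T_{j}^{*}$) collapses this to $\lesssim(2^{j}l)^{-n/2+n^{2}/(2N)}Mf(x_{0})$.

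For the near piece: by Cauchy–Schwarz $\frac{1}{|Q|}\int_{Q}|T_{j}f_{1}|\le|Q|^{-1/2}\|T_{j}f_{1}\|_{L^{2}}$, so it suffices to have an $(L^{1},L^{2})$ smoothing bound $\|T_{j}f\|_{L^{2}}\lesssim 2^{jn(\varrho-1/2)}\|f\|_{L^{1}}$ (and likewise for $T_{j}^{*}$): with $\|f_{1}\|_{L^{1}}\lesssim T^{n}Mf(x_{0})$ and the choice of $T$ this again gives $\lesssim(2^{j}l)^{-n/2+n^{2}/(2N)}Mf(x_{0})$. For $T_{j}^{*}$ this bound is immediate, since $K_{j}^{*}(x,y)$ is, as a function of $x$, a translate of the inverse Fourier transform of $a_{j}(y,\cdot)$, whence by Plancherel $\sup_{y}\|K_{j}^{*}(\cdot,y)\|_{L^{2}}\lesssim\sup_{y}\|a_{j}(y,\cdot)\|_{L^{2}}\lesssim 2^{j(m+n/2)}=2^{jn(\varrho-1/2)}$ and $\|T_{j}^{*}f\|_{L^{2}}\le\big(\sup_{y}\|K_{j}^{*}(\cdot,y)\|_{L^{2}}\big)\|f\|_{L^{1}}$. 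For $T_{j}$ the symbol sits at $x$ and Plancherel is unavailable: when $\delta\le\varrho$ a $T^{*}T$ argument (the composition $T_{j}^{*}T_{j}$ is again frequency–localised, of order $2m$) gives $\sup_{y}\|K_{j}(\cdot,y)\|_{L^{2}}^{2}\lesssim 2^{j(2m+n)}$, i.e.\ the same bound since $m=-n(1-\varrho)$; when $\delta>\varrho$ one freezes $a_{j}(x,\xi)$ to $a_{j}(x_{i},\xi)$ on a grid $\{Q_{i}\}$ of small mesh, controls the error on $Q_{i}$ by $\lesssim(\mathrm{diam}\,Q_{i})\,2^{j\delta}Mf(x_{0})$ as in \eqref{a} (this is where the extra $-\tfrac n2(\delta-\varrho)$ in the order of $a$ is spent), and applies the Plancherel estimate to each frozen, $x$–independent operator; the mesh is chosen depending on $j,l$ so that both the frozen contribution and the error fit under $(2^{j}l)^{-n/2+n^{2}/(2N)}Mf(x_{0})$, which is possible exactly because $2^{j}\le l^{-1/\varrho}$.

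Adding the two pieces and letting $N\to\infty$ yields $\frac{1}{|Q|}\int_{Q}|T_{j}f|\lesssim 2^{-jn/2}l^{-n/2}Mf(x_{0})$, and the same for $T_{j}^{*}$. The only non-routine step is the $(L^{1},L^{2})$ smoothing estimate for $T_{j}$ in the regime $\delta>\varrho$: the honest kernel bound alone gives only $\|T_{j}\|_{L^{1}\to L^{2}}\sim 2^{j(m+n-\varrho n/2)}$, a factor $2^{j\frac n2(1-\delta)}$ too large, and the freezing device is what repairs it — this is precisely why $T_{j}$ is assumed of order $-n(1-\varrho)-\tfrac n2\max\{\delta-\varrho,0\}$ while $T_{j}^{*}$, for which Plancherel applies directly, is only assumed of order $-n(1-\varrho)$. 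Everything else (the integrations by parts in the kernels and the dyadic–annulus summations) parallels Lemmas~\ref{L8}–\ref{L9} and \ref{L3}.
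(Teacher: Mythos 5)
Your architecture is genuinely different from the paper's: you split $f$ at a radius $T$ around $x_{0}$, use a pointwise kernel bound for the far part and an $(L^{1},L^{2})$ smoothing bound for the near part, whereas the paper first applies Cauchy--Schwarz in $x$ over the whole cube (producing the $l^{-n/2}$), then Minkowski's integral inequality, and then bounds $\sup_{y}\|K_{j}(\cdot,y)\|_{L^{2}_{x}}$ by writing $K_{j}(\cdot,y)=T_{c_{j}}[h_{j}(\cdot-y)]$ with $c_{j}=a_{j}|\xi|^{n(1-\varrho)}\in S^{-\frac n2\max\{\delta-\varrho,0\}}_{\varrho,\delta}$ (hence $L^{2}$-bounded) and $\|h_{j}\|_{L^{2}}\approx 2^{jn(\varrho-\frac12)}$, treating the far $y$-region the same way after integrating by parts. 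Your version has a quantitative defect: the far estimate is taken in $L^{\infty}_{x}$, so the shell $|\xi|\sim 2^{j}$ costs $2^{j(m+n)}$ instead of the $2^{j(m+n/2)}$ available in $L^{2}_{x}$; with the natural radius $T=2^{-j\varrho}$ the far piece then only gives $O(Mf(x_{0}))$, which is why you inflate $T$ to $(2^{j}l)^{n/(2N)}2^{-j\varrho}$. That inflation feeds back through $\|f_{1}\|_{L^{1}}\lesssim T^{n}Mf(x_{0})$, and both pieces land at $(2^{j}l)^{-n/2+n^{2}/(2N)}Mf(x_{0})$, which exceeds the claimed $(2^{j}l)^{-n/2}Mf(x_{0})$ by the factor $(2^{j}l)^{n^{2}/(2N)}\geq 1$, as large as $l^{-(1-\varrho)n^{2}/(2\varrho N)}$ in the given range of $j$. ``Letting $N\to\infty$'' is not legitimate, since every implied constant depends on $N$. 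The weaker bound is still summable over $l^{-1}\leq 2^{j}\leq l^{-1/\varrho}$ for fixed $N>n$ and would serve the applications, but it does not prove the lemma as stated; keeping the far piece in $L^{2}_{x}$ as the paper does removes the loss.

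The more serious gap is the $(L^{1},L^{2})$ bound $\|T_{j}f\|_{L^{2}}\lesssim 2^{jn(\varrho-\frac12)}\|f\|_{L^{1}}$ for $T_{j}$ when $\delta>\varrho$. The one-shot freezing you sketch cannot close: on a grid of mesh $\mu$ the frozen contribution to $\frac{1}{|Q|}\int_{Q}$ scales like $\mu^{-n/2}\,2^{-j\frac n2-j\frac n2(\delta-\varrho)}Mf(x_{0})$ while the freezing error is $\gtrsim \mu\,2^{j\delta-j\frac n2(\delta-\varrho)}Mf(x_{0})$, so one needs simultaneously $\mu\geq l\,2^{-j(\delta-\varrho)}$ and $\mu\leq 2^{-j\delta+j\frac n2(\delta-\varrho)-j\frac n2}l^{-n/2}$; at the top of the range $2^{j}\sim l^{-1/\varrho}$ these are compatible only if $\delta\geq 1$. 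This is exactly the obstruction that forces the multi-scale iteration over $\lambda=\delta^{-k}$ in Lemma \ref{L7} and Lemma \ref{La5}, and a single freezing scale does not reproduce it. In fact no freezing is needed here: the extra $-\frac n2(\delta-\varrho)$ in the order of $a$ is spent by noting that $c_{j}=a_{j}|\xi|^{n(1-\varrho)}$ has exactly the critical order $-\frac n2\max\{\delta-\varrho,0\}$, so the sharp $L^{2}$ theorem for $S^{m}_{\varrho,\delta}$ with $\delta<1$ gives $\sup_{y}\|K_{j}(\cdot,y)\|_{L^{2}_{x}}\lesssim\|h_{j}\|_{L^{2}}\approx 2^{jn(\varrho-\frac12)}$ directly, which is precisely the smoothing bound you need. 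Your Plancherel argument for $T^{*}_{j}$ and the $T^{*}T$ remark for $\delta\leq\varrho$ are fine.
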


\begin{proof}
We prove (\ref{E5}) first. H\"{o}lder's inequality and Minkowski's inequality implies that the left hand in (\ref{E5}) can be bounded by
\begin{eqnarray*}
l^{-\frac{n}{2}}\int_{\mathbb{R}^{n}}\big(\int_{|x-x_{0}|<l}|K_{j}(x,y)|^{2}dx\big)^{\frac{1}{2}}|f(y)|dy.
\end{eqnarray*}
So, it suffices to show
\begin{eqnarray*}
\int_{\mathbb{R}^{n}}\big(\int_{|x-x_{0}|<l}|K_{j}(x,y)|^{2}dx\big)^{\frac{1}{2}}|f(y)|dy
&\lesssim&2^{-j\frac{n}{2}}Mf(x_{0}).
\end{eqnarray*}
Break up the integral with respect to the variable $y$ as follows
\begin{eqnarray}\label{E01}
\int_{|y-x_{0}|\leq2^{-j\varrho+1}}+\int_{|y-x_{0}|>2^{-j\varrho+1}}.
\end{eqnarray}
Let $c_{j}(x,\xi)=a_j(x,\xi)|\xi|^{n(1-\varrho)}$ and $\widehat{h_{j}(\xi)}=|\xi|^{-n(1-\varrho)}\chi_{j}(\xi)$. Then we can write
\begin{eqnarray*}
K_{j}(x,y)=\int_{{\mathbb{R}^n}}e^{ i\langle x-y, \xi\rangle}a_j(x,\xi)d\xi
=\int_{{\mathbb{R}^n}}e^{ i\langle x-y, \xi\rangle}c_j(x,\xi)\hat{h_{j}(\xi)}d\xi
=T_{c_{j}}h_{j}(x-y)
\end{eqnarray*}
So, the first term in (\ref{E01}) can be wrote as
\begin{eqnarray*}
\int_{|y-x_{0}|\leq2^{-j\varrho+1}}\big(\int_{\mathbb{R}^{n}}|T_{c_{j}}h_{j}(x-y)|^{2}dx\big)^{\frac{1}{2}}|f(y)|dy
\end{eqnarray*}
Notice $c_{j}\in S^{-\frac{n}{2}\max\{\delta-\varrho,0\}}_{\varrho,\delta}$. Moreover $T_{c_{j}}$ is bounded on $L^{2}$. So it can be bounded by
\begin{eqnarray*}
\int_{|y-x_{0}|\leq2^{-j\varrho+1}}|f(y)|dy\big(\int_{\mathbb{R}^{n}}|h_{j}(\xi)|^{2}d\xi\big)^{\frac{1}{2}}
\leq 2^{-j\frac{n}{2}}Mf(x_{0}).
\end{eqnarray*}

Now we estimate the second term in (\ref{E01}). For positive integer $N>n$, denote $\tilde{c}_{j}(x,\xi)=\partial^{N}_{\xi}a_j(x,\xi)|\xi|^{n(1-\varrho)+\varrho N}$ and $\widehat{\tilde{h}_{j}(\xi)}=|\xi|^{-n(1-\varrho)-\varrho N}\chi_{j}(\xi)$. Then we can write
\begin{eqnarray*}
K_{j}(x,y)
=\frac{1}{|x-y|^{N}}\int_{{\mathbb{R}^n}}e^{ i\langle x-y, \xi\rangle}\partial^{N}_{\xi}a_j(x,\xi)d\xi
=\frac{1}{|x-y|^{N}}T_{\tilde{c}_{j}}\tilde{h}_{j}(x-y)
\end{eqnarray*}
So, the second term in (\ref{E01}) can be wrote as
\begin{eqnarray*}
\int_{|y-x_{0}|>2^{-j\varrho+1}}\big(\int_{|x-x_{0}|<l}
|\frac{1}{|y-x|^{N}}T_{\tilde{c}_{j}}\tilde{h}_{j}(x-y)|^{2}dx\big)^{\frac{1}{2}}|f(y)|dy.
\end{eqnarray*}
Notice that $|y-x|\sim|y-x_{0}|$ for any $|x-x_{0}|<l$ and $|y-x_{0}|>2^{-j\varrho+1}\geq2l$. Then it is bounded by
\begin{eqnarray*}
\int_{|y-x_{0}|>2^{-j\varrho+1}}\frac{1}{|y-x_{0}|^{N}}\big(\int_{\mathbb{R}^{n}}
|T_{\tilde{c}_{j}}\tilde{h}_{j}(x-y)|^{2}dx\big)^{\frac{1}{2}}|f(y)|dy.
\end{eqnarray*}
Clearly, $\tilde{c_{j}}\in S^{-\frac{n}{2}\max\{\delta-\varrho,0\}}_{\varrho,\delta}$. So $L^{2}$ boundedness of $T_{\tilde{c_{j}}}$ gives that it has bound
\begin{eqnarray*}
\int_{|y-x_{0}|>2^{-j\varrho+1}}\frac{1}{|y-x_{0}|^{N}}|f(y)|dy\big(\int_{\mathbb{R}^{n}}|\tilde{h}_{j}(\xi)|^{2}d\xi\big)^{\frac{1}{2}}
\leq 2^{-j\frac{n}{2}}Mf(x_{0}).
\end{eqnarray*}
For (\ref{E7}), it be got by the same argument as above with $L^{2}$ boundedness of pseudo-differential operators replaced by Parseval's identity. So the proof is finished.
\end{proof}
We remark that there is no use for the smoothness of variable $y$ of $a(y,\xi)$ when we prove (\ref{E7}). So the condition on $a(y,\xi)$ can be relaxed. More exactly, we have
\begin{lem}\label{La6}
Let $Q(x_{0},l)$ be a fixed cube with side length $l<1$. Suppose $0<\varrho\leq1$. For any positive integer $j$ with $l^{-1}\leq2^{j}\leq l^{-\frac{1}{\varrho}}$, if $a\in L^{\infty}S^{-n(1-\varrho)}_{\varrho}$ then
\begin{eqnarray}\label{E18}
\frac{1}{|Q(x_{0},l)|}\int_{Q(x_{0},l)}|T^{*}_{j}f(x)|dx
&\lesssim&2^{-j\frac{n}{2}}l^{-\frac{n}{2}}Mf(x_{0}).
\end{eqnarray}
\end{lem}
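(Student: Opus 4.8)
The plan is to mimic the proof of Lemma~\ref{La4}, specifically the argument for \eqref{E7}, but now exploiting Remark~\ref{R41}: the smoothness of the variable $y$ in $a(y,\xi)$ was never used there, so the weaker hypothesis $a\in L^{\infty}S^{-n(1-\varrho)}_{\varrho}$ suffices. First I would reduce, by H\"{o}lder's inequality and Minkowski's integral inequality exactly as in the proof of \eqref{E5}, the quantity $\frac{1}{|Q(x_{0},l)|}\int_{Q(x_{0},l)}|T^{*}_{j}f(x)|\,dx$ to
\begin{eqnarray*}
l^{-\frac{n}{2}}\int_{\mathbb{R}^{n}}\Big(\int_{|x-x_{0}|<l}|K^{*}_{j}(x,y)|^{2}\,dx\Big)^{\frac{1}{2}}|f(y)|\,dy,
\end{eqnarray*}
so that it is enough to prove the bound $\int_{\mathbb{R}^{n}}\big(\int_{|x-x_{0}|<l}|K^{*}_{j}(x,y)|^{2}dx\big)^{1/2}|f(y)|\,dy\lesssim 2^{-jn/2}Mf(x_{0})$.

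Next I would split the $y$-integral at the radius $|y-x_{0}|=2^{-j\varrho+1}$ as in \eqref{E01}. For the inner region $|y-x_{0}|\leq 2^{-j\varrho+1}$, I would write $K^{*}_{j}(x,y)=\int_{\mathbb{R}^n}e^{i\langle x-y,\xi\rangle}a_{j}(y,\xi)\,d\xi$ and, since the $L^{2}_{x}$-norm is being taken, apply Parseval's identity in $x$: $\int_{\mathbb{R}^n}|K^{*}_{j}(x,y)|^{2}\,dx=\int_{\mathbb{R}^n}|a_{j}(y,\xi)|^{2}\,d\xi\lesssim 2^{-j n(1-\varrho)\cdot 2}\cdot 2^{jn}=2^{-jn(1-2\varrho)}$ using $|a_{j}(y,\xi)|\lesssim\langle\xi\rangle^{-n(1-\varrho)}$ on $|\xi|\sim 2^{j}$ and the measure $2^{jn}$ of that shell. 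Combining with the bound $\int_{|y-x_{0}|\leq 2^{-j\varrho+1}}|f(y)|\,dy\lesssim 2^{-j\varrho n}Mf(x_{0})$ yields the required $2^{-jn/2}Mf(x_{0})$. Here only $\|\partial^{\alpha}_{\xi}a(\cdot,\xi)\|_{L^{\infty}}\lesssim\langle\xi\rangle^{m-\varrho|\alpha|}$ (with $\alpha=0$) is invoked, which is exactly the $L^{\infty}S^{m}_{\varrho}$ hypothesis.

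For the outer region $|y-x_{0}|>2^{-j\varrho+1}$ I would integrate by parts $N$ times in $\xi$, for a fixed integer $N>n$: writing $K^{*}_{j}(x,y)=\frac{1}{|x-y|^{N}}\int_{\mathbb{R}^n}e^{i\langle x-y,\xi\rangle}\partial^{N}_{\xi}a_{j}(y,\xi)\,d\xi$ and again using Parseval in $x$ together with $|\partial^{N}_{\xi}a_{j}(y,\xi)|\lesssim\langle\xi\rangle^{-n(1-\varrho)-\varrho N}$ on $|\xi|\sim 2^{j}$, one gets $\big(\int_{|x-x_{0}|<l}|K^{*}_{j}(x,y)|^{2}dx\big)^{1/2}\lesssim |y-x_{0}|^{-N}2^{-jn/2}2^{-j\varrho N}$, after noting $|x-y|\sim|y-x_{0}|$ because $|y-x_{0}|>2^{-j\varrho+1}\geq 2l>2|x-x_{0}|$ (using $2^{j}l<1$). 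Then $\int_{|y-x_{0}|>2^{-j\varrho+1}}|y-x_{0}|^{-N}|f(y)|\,dy\lesssim (2^{-j\varrho})^{-N+n}Mf(x_{0})=2^{j\varrho(N-n)}Mf(x_{0})$ by the standard dyadic decomposition of the complement, and multiplying by the $2^{-j\varrho N}$ factor leaves $2^{-jn/2}2^{-j\varrho n}\cdot 2^{j\varrho n}\cdot$(from the $|f|$-integral, re-examining constants)$\,=2^{-jn/2}Mf(x_{0})$; the exponents in $\varrho$ cancel as in Lemma~\ref{La4}. This completes the estimate, and hence \eqref{E18}. The main obstacle — and it is a mild one — is bookkeeping the powers of $2^{j}$ and $\varrho$ so that the $\varrho$-dependence cancels cleanly in both regions; since the structure is identical to the proof of \eqref{E7} and only the symbol hypothesis has been weakened (in a way that, by Remark~\ref{R41}, does not affect any step), no genuinely new difficulty arises and one may in fact simply say the proof is verbatim that of \eqref{E7}.
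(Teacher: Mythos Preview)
Your proposal is correct and follows precisely the paper's approach: the paper simply remarks after Lemma~\ref{La4} that the proof of \eqref{E7} uses only Parseval's identity and never the smoothness of $a(y,\xi)$ in $y$, so the hypothesis can be relaxed to $a\in L^{\infty}S^{-n(1-\varrho)}_{\varrho}$, and then states Lemma~\ref{La6} without further argument. One small slip in your write-up: to justify $|x-y|\sim|y-x_{0}|$ you invoke ``$2^{j}l<1$'', but in this lemma $2^{j}l\geq 1$; what is actually needed (and holds, from $2^{j}\leq l^{-1/\varrho}$) is $2^{j\varrho}l\leq 1$, giving $2^{-j\varrho+1}\geq 2l$.
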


\begin{lem}\label{La5}
Let $Q(x_{0},l)$ be a fixed cube with side length $l<1$. Suppose $0<\varrho<\delta<1$. For for any $1\leq\lambda\leq\frac{1}{\varrho}$ and any positive integer $j$ with $l^{-\lambda}\leq2^{j}\leq l^{-\frac{1}{\varrho}}$, if $a\in S^{-n(1-\varrho)}_{\varrho,\delta}$ then
\begin{eqnarray*}
\frac{1}{|Q(x_{0},l)|}\int_{Q(x_{0},l)}|T_{j}f(x)|dx
&\lesssim&\big(l^{\lambda}2^{j\delta}+l^{-\frac{n\lambda}{2}}2^{-j\frac{n}{2}}\big)Mf(x_{0}).
\end{eqnarray*}
\end{lem}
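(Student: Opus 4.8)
The plan is to mimic the structure of the proof of Lemma~\ref{L7}, but with the decay gains of the ``$p=1$'' calculus (Lemma~\ref{L3} and Lemma~\ref{La4}) replacing those of the ``$p>1$'' calculus (Lemma~\ref{L8} and Lemma~\ref{L9}). Fix the cube $Q(x_{0},l)$ with $l<1$ and a value $\lambda$ with $1\leq\lambda\leq\frac{1}{\varrho}$, and an integer $j$ with $l^{-\lambda}\leq 2^{j}\leq l^{-1/\varrho}$. As in Lemma~\ref{L7}, when $1<\lambda\leq\frac{1}{\varrho}$ we have $l^{\lambda}<l$, so we cover $Q(x_{0},l)$ by $L^{n}$ cubes $Q(x_{i},l^{\lambda})$ with $L-1<l^{1-\lambda}\leq L$, hence $L^{n}\leq 2^{n}l^{n(1-\lambda)}$, and $Q(x_{0},l)\subset\bigcup_{i}Q(x_{i},l^{\lambda})\subset Q(x_{0},2l)$. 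Setting $T_{j,i}f(x)=\int_{\mathbb{R}^{n}}e^{i\langle x,\xi\rangle}a(x_{i},\xi)\psi(2^{-j}\xi)\hat{f}(\xi)\,d\xi$, I split
\begin{eqnarray*}
\frac{1}{|Q|}\int_{Q(x_{0},l)}|T_{j}f(x)|\,dx
\leq \frac{1}{|Q|}\sum_{i=1}^{L^{n}}\Bigl(\int_{Q(x_{i},l^{\lambda})}|T_{j}f(x)-T_{j,i}f(x)|\,dx+\int_{Q(x_{i},l^{\lambda})}|T_{j,i}f(x)|\,dx\Bigr).
\end{eqnarray*}
For $\lambda=1$ one uses $T_{j,0}$ (frozen at $x_{0}$) and a single cube, exactly as in Lemma~\ref{L7}.

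The first step is the oscillation estimate. For $x\in Q(x_{i},l^{\lambda})$,
\begin{eqnarray*}
|T_{j}f(x)-T_{j,i}f(x)|\leq\int_{\mathbb{R}^{n}}|f(y)|\Bigl|\int_{\mathbb{R}^{n}}e^{i\langle x-y,\xi\rangle}\bigl(a(x,\xi)-a(x_{i},\xi)\bigr)\psi(2^{-j}\xi)\,d\xi\Bigr|\,dy,
\end{eqnarray*}
and since $|a(x,\xi)-a(x_{i},\xi)|\lesssim |x-x_{i}|\,\langle\xi\rangle^{-n(1-\varrho)+\delta}$ on the support of $\psi(2^{-j}\xi)$ (and the same for its $\xi$-derivatives), the argument producing (\ref{E2}) in Lemma~\ref{L3}, now with the extra factor $|x-x_{i}|2^{j\delta}$ coming out of the mean value theorem in $x$, gives $|T_{j}f(x)-T_{j,i}f(x)|\lesssim |x-x_{i}|\,2^{j\delta}\,Mf(x_{0})\lesssim l^{\lambda}2^{j\delta}Mf(x_{0})$. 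Averaging and using $\#\{i\}=L^{n}\leq 2^{n}l^{n(1-\lambda)}$ together with $|Q(x_{i},l^{\lambda})|=l^{n\lambda}$ bounds the first sum by $l^{\lambda}2^{j\delta}Mf(x_{0})$.

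The second step is the local term $\frac{1}{|Q|}\sum_{i}\int_{Q(x_{i},l^{\lambda})}|T_{j,i}f(x)|\,dx$. Here $a(x_{i},\cdot)\psi(2^{-j}\cdot)$ is an $x$-independent multiplier supported in $|\xi|\sim 2^{j}$ of size $\lesssim 2^{-jn(1-\varrho)}$, so $T_{j,i}$ is bounded on $L^{2}$ with norm $\lesssim 2^{-jn(1-\varrho)+jn/2}=2^{-jn(1-2\varrho)/2}$; more importantly, repeating the kernel-square-function computation of Lemma~\ref{La4} (splitting the $y$-integral at $|y-x_{0}|\sim 2^{-j\varrho}$, writing $K_{j,i}(x-y)=T_{c_{j,i}}h_{j}(x-y)$ with $c_{j,i}(\xi)=a(x_{i},\xi)|\xi|^{n(1-\varrho)}$ bounded, $\widehat{h_{j}}=|\xi|^{-n(1-\varrho)}\chi_{j}$, and integrating by parts $N$ times on the far part) yields, after averaging over $Q(x_{i},l^{\lambda})$,
\begin{eqnarray*}
\frac{1}{|Q(x_{i},l^{\lambda})|}\int_{Q(x_{i},l^{\lambda})}|T_{j,i}f(x)|\,dx\lesssim 2^{-j\frac{n}{2}}l^{-\frac{n\lambda}{2}}Mf(x_{0}),
\end{eqnarray*}
exactly the $p=1$ analogue of (\ref{b}), valid because $l^{-\lambda}\leq 2^{j}$ keeps the auxiliary radius below the cube scale. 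Summing over the $L^{n}\leq 2^{n}l^{n(1-\lambda)}$ indices and dividing by $|Q|=l^{n}$ contributes $l^{n(1-\lambda)}l^{n\lambda}l^{-n}\cdot 2^{-jn/2}l^{-n\lambda/2}Mf(x_{0})=l^{-\frac{n\lambda}{2}}2^{-j\frac{n}{2}}Mf(x_{0})$. Combining the two steps gives the claimed bound $\bigl(l^{\lambda}2^{j\delta}+l^{-\frac{n\lambda}{2}}2^{-j\frac{n}{2}}\bigr)Mf(x_{0})$.

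I expect the main obstacle to be purely bookkeeping: getting the frozen symbols $a(x_{i},\xi)\psi(2^{-j}\xi)$ and their regularized versions into exactly the right symbol class (here the trivial class, since there is no $x$-dependence, but one must be careful that freezing does not spoil the $S^{0}$-type bounds on the rescaled multiplier) and tracking the powers of $l$ and $2^{j}$ through the covering argument so that the two error terms come out as written — in particular making sure the condition $l^{-\lambda}\leq 2^{j}\leq l^{-1/\varrho}$ is precisely what is needed for both the oscillation estimate (which wants $2^{j}$ not too large relative to $l^{\lambda}$, via $|x-x_i|\lesssim l^\lambda$) and the square-function estimate (which wants $2^{-j\varrho}\gtrsim l^{\lambda}$ so that $|y-x|\sim|y-x_{0}|$ on the far region). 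No genuinely new idea beyond Lemmas~\ref{L3}, \ref{La4} and the decomposition in Lemma~\ref{L7} should be required.
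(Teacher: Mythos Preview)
Your proposal is correct and matches the paper's own proof essentially line for line: the paper also covers $Q(x_{0},l)$ by the cubes $Q(x_{i},l^{\lambda})$ as in Lemma~\ref{L7}, obtains the oscillation bound $|T_{j}f(x)-T_{j,i}f(x)|\lesssim l^{\lambda}2^{j\delta}Mf(x_{0})$ and the local bound $\int_{Q(x_{i},l^{\lambda})}|T_{j,i}f(x)|\,dx\lesssim 2^{-jn/2}l^{n\lambda/2}Mf(x_{0})$, and then sums using $L^{n}\leq 2^{n}l^{n(1-\lambda)}$. Your more detailed justification of the two key estimates via Lemmas~\ref{L3} and~\ref{La4} is exactly what the paper's terse ``it is easy to get'' is pointing to.
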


\begin{proof}
This proof can be completed by a similar argument as in the proof of Lemma \ref{L7}. Using the notations in them, one write
\begin{eqnarray*}
&&\frac{1}{|Q|}\int_{Q(x_{0},l)}|T_{j}f(x)|dx\nonumber\\
&\leq&\frac{1}{|Q|}\sum\limits_{i=1}^{L^{n}}\bigg(\int_{Q(x_{i},l^{\lambda})}|T_{j}f(x)-T_{j,i}f(x)|dx+\int_{Q(x_{i},l^{\lambda})}|T_{j,i}f(x)|dx\bigg).
\end{eqnarray*}
It is easy to get
\begin{eqnarray*}
|T_{j}f( x)-T_{j,i}f( x)|\lesssim l^{\lambda}2^{j\delta}Mf(x_{0})
\end{eqnarray*}
and
\begin{eqnarray*}
\int_{Q(x_{i},l^{\lambda})}|T_{j,i}f(x)|dx
&\lesssim&2^{-j\frac{n}{2}}l^{\frac{n\lambda}{2}}Mf(x_{0}).
\end{eqnarray*}
Recall $L^{n}\leq2^{n}l^{n(1-\lambda)}$, the desired estimate can be gotten immediately.
\end{proof}
%Here, the range of $\lambda$ can be extended to $[1,\infty)$ for the case $\varrho=0$.
%However, to make some sums convergent, $\lambda$ has to be confined to a finite range.
%\begin{lem}\label{L2}
%Let $Q(x_{0},l)$ be a fixed cube with side length $l<1$. Suppose $\varrho= 0$ and $0<\delta<1$. For any $1\leq\lambda\leq\frac{2}{1-\delta}$ and any positive integer $j$ with $l^{-\lambda}\leq2^{j}\leq l^{-\frac{2}{1-\delta}}$, if $a\in S^{-n}_{0,\delta}$ then
%\begin{eqnarray*}
%\frac{1}{|Q|}\int_{Q(x_{0},l)}|T_{j}u(x)|dx
%&\lesssim&\big(2^{j\delta}l^{\lambda}+l^{-\frac{n\lambda}{2}}2^{-j\frac{n}{2}}\big)Mu(x_{0}).
%\end{eqnarray*}
%\end{lem}
%
%\begin{lem}\label{L4}
%Let $Q(x_{0},l)$ be a fixed cube with side length $l<1$. Suppose $\varrho= 0$, $0\leq\delta<1$ and $0<\epsilon<1$. For any positive integer $N>n$ and any positive integer $j$ with $l^{-\frac{2}{1-\delta}}\leq2^{j}$, if $a\in S^{-n}_{0,\delta}$ then
%\begin{eqnarray*}
%\frac{1}{|Q|}\int_{Q(x_{0},l)}|T_{j}u(x)|^{\epsilon}dx
%&\lesssim&2^{-j\frac{n}{2}(1-\delta)(1-\frac{n}{N})\epsilon}l^{-n(1-\frac{n}{N})\epsilon}\big(Mu(x_{0})\big)^{\epsilon}.
%\end{eqnarray*}
%\end{lem}
%The proof of Lemma \ref{L4} can be finished by making modification on the proof of Lemma \ref{La00}: using weak $(1,1)$ estimate for $T_{j}$ and Kolmogorov's inequality instead of $L^{p}$ estimate.

Applying weak $(1,1)$ estimate for $T_{j}$ and Kolmogorov's inequality instead of $L^{p}$ estimate in the proof Lemma \ref{L70}, we can get a similar result for $\varrho>0$.
\begin{lem}\label{L1}
Let $Q(x_{0},l)$ be a fixed cube with side length $l<1$. Suppose  $0<\varrho\leq1$, $0\leq\delta<1$ and $0<\epsilon<1$. For any positive integer $N>n$ and any positive integer $j$ with $l^{-\frac{1}{\varrho}}\leq2^{j}$,
if $a\in S^{-n(1-\varrho)}_{\varrho,\delta}$ then
\begin{eqnarray}\label{E8}
\frac{1}{|Q|}\int_{Q(x_{0},l)}|T_{j}u(x)|^{\epsilon}dx
\lesssim\big(2^{-j(\frac{n}{2}(1-\varrho)-\frac{n}{2}\max\{\delta-\varrho,0\})\epsilon}+2^{-j\varrho(n-N)\epsilon}l^{n-N)\epsilon}\big)
\big(Mu(x_{0})\big)^{\epsilon}
\end{eqnarray}
and
\begin{eqnarray}\label{E9}
\frac{1}{|Q|}\int_{Q(x_{0},l)}|T^{*}_{j}u(x)|^{\epsilon}dx
\lesssim\big(2^{-j(\frac{n}{2}(1-\varrho)-\frac{n}{2}\max\{\delta-\varrho,0\})\epsilon}+2^{-j\varrho(n-N)\epsilon}l^{n-N)\epsilon}\big)
\big(Mu(x_{0})\big)^{\epsilon}.
\end{eqnarray}
\end{lem}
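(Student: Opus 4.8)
The plan is to follow the template of Lemma \ref{L70} but replace the $L^p\to L^2$ mapping property by the weak-type $(1,1)$ estimate for the pieces $T_j$, $T^*_j$, and to absorb the resulting lack of a strong bound via Kolmogorov's inequality, which converts a weak-$(1,1)$ bound into a control of $L^\epsilon$ averages for $\epsilon<1$. Concretely, since $a(x,\xi)\psi(2^{-j}\xi)\in S^{-n(1-\varrho)}_{\varrho,\delta}$ (with bound $\lesssim 2^{-jn(1-\varrho)/2\cdot\text{something}}$ after extracting the dyadic localization; more precisely the localized symbol sits in $S^{-n/2-\frac n2\max\{\delta-\varrho,0\}}_{\varrho,\delta}$ with bound $\lesssim 2^{-jn/2}$), the operator $T_j$ is of weak type $(1,1)$ with constant $\lesssim 2^{-jn/2}$, and similarly for $T^*_j$ via Parseval instead of $L^2$-boundedness. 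This is the analogue, at the endpoint $p=1$, of the $(p,2)$-boundedness used in the earlier lemmas.

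First I would split $u=u_1+u_2$ with $u_1=u\chi_{Q(x_0,4T)}$ for a radius $T$ chosen as in Lemma \ref{L70} (the near/far decomposition), say $T = l^{\,n/(2N)\cdot?}\,2^{j(n/(2N)-\varrho)}$ adapted to the present exponents, so that $T>l$ and, for the far part, $|y-x|\sim|y-x_0|$ whenever $x\in Q(x_0,l)$ and $y\notin Q(x_0,4T)$. For the local part I would estimate
\begin{eqnarray*}
\frac{1}{|Q|}\int_{Q(x_0,l)}|T_ju_1(x)|^{\epsilon}\,dx
&\lesssim& l^{-n}\|T_ju_1\|_{L^{\epsilon}(Q(x_0,l))}^{\epsilon}
\end{eqnarray*}
and apply Kolmogorov's inequality on the cube $Q(x_0,l)$: for $0<\epsilon<1$,
$$\|T_ju_1\|_{L^{\epsilon}(Q(x_0,l))}\lesssim |Q(x_0,l)|^{1/\epsilon-1}\,\|T_ju_1\|_{L^{1,\infty}}\lesssim l^{n(1/\epsilon-1)}\,2^{-jn/2}\,\|u_1\|_{L^1},$$
and then bound $\|u_1\|_{L^1}\le |Q(x_0,4T)|\,Mu(x_0)\lesssim T^n Mu(x_0)$. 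Substituting the chosen value of $T$ produces exactly the first term $2^{-j(\frac n2(1-\varrho)-\frac n2\max\{\delta-\varrho,0\})\epsilon}$ on the right of \eqref{E8} after the bookkeeping in the exponents (the same arithmetic that appears in the proof of Lemma \ref{L70}). For the far part $u_2$, I would integrate by parts $N$ times in $\xi$ inside the kernel $K_j$, gaining a factor $|x-y|^{-N}\sim|y-x_0|^{-N}$, use Parseval/$L^2$-bounds on the remaining symbol (which lies in $S^{-\frac n2\max\{\delta-\varrho,0\}}_{\varrho,\delta}$ after renormalization), and estimate the tail integral $\int_{|y-x_0|>4T}|y-x_0|^{-N}|u(y)|\,dy\lesssim T^{n-N}Mu(x_0)$ since $N>n$; raising to the power $\epsilon$ and inserting the value of $T$ gives the second term $2^{-j\varrho(n-N)\epsilon}l^{(n-N)\epsilon}$. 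For $T^*_j$ the argument is verbatim except that the roles of the space and frequency sides of the symbol are interchanged, so one uses Parseval's identity directly rather than $L^2$-boundedness of a pseudodifferential operator; this is precisely the modification already invoked in Lemma \ref{La4} for \eqref{E7}, and the $S^{-n(1-\varrho)}_{\varrho,\delta}$ hypothesis is enough for $T^*_j$.

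The main obstacle, as the remark preceding the lemma hints, is that one does \emph{not} have $L^p\to L^2$ for the localized pieces at $p=1$ — only weak $(1,1)$ — so one cannot simply quote Hölder plus a strong estimate as in Lemma \ref{L70}. Kolmogorov's inequality is the device that saves the day, but it is only available for $\epsilon<1$, which is exactly why the statement is phrased in terms of $L^\epsilon$ averages rather than $L^1$ averages. The one point requiring genuine care is verifying that, with the present choice of the cut radius $T$, the exponents of $2^j$ and $l$ coming out of $T^n$ and $T^{n-N}$ combine to give precisely the two displayed terms; this is a routine but slightly tedious bookkeeping step identical in structure to that in Lemma \ref{L70}, and I would simply state that "the same computation as in the proof of Lemma \ref{L70}, with $L^p$-boundedness replaced by weak $(1,1)$-boundedness and Hölder's inequality replaced by Kolmogorov's inequality, yields \eqref{E8} and \eqref{E9}."
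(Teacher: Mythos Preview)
Your approach is exactly the one the paper takes: its entire proof is the single sentence ``Applying weak $(1,1)$ estimate for $T_{j}$ and Kolmogorov's inequality instead of $L^{p}$ estimate in the proof of Lemma~\ref{L70}.'' Two small corrections to your write-up: in Lemma~\ref{L70} the cut radius is simply $\Gamma=l$ (not the more elaborate $T$ from Lemma~\ref{L9}), and the weak-$(1,1)$ constant of $T_j$ is $2^{-j(\frac{n}{2}(1-\varrho)-\frac{n}{2}\max\{\delta-\varrho,0\})}$ rather than $2^{-jn/2}$; also the far piece $u_2$ needs only the pointwise kernel bound coming from integration by parts, not any $L^2$/Parseval step.
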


\begin{proof}[Proof of Theorem \ref{TH3}]

Without loss of generality, we assume that the symbol $a(x,\xi)$ vanishes for $|\xi|\leq 1$. Let $Q=Q(x_{0},l)$ denote the cube centered at $x_{0}$ with the side length $l.$ For any fixed cube $Q$, we are going to prove that
\begin{eqnarray*}
\frac{1}{|Q|}\int_{Q}||T_{a}u(x)|^{\epsilon}-|C_{Q}|^{\epsilon}|dx\lesssim \big(Mu(x_{0})\big)^{\epsilon},
\end{eqnarray*}
where $C_{Q}=\frac{1}{|Q|}\int_{Q}T_{a}u(y)dy$. Notice that $||a|^{\epsilon}-|b|^{\epsilon}|\leq|a-b|^{\epsilon}$, $0<\epsilon<1$, it suffices to prove
\begin{eqnarray}\label{E00}
\frac{1}{|Q|}\int_{Q}|T_{a}u(x)-C_{Q}|^{\epsilon}dx\lesssim \big(Mu(x_{0})\big)^{\epsilon}.
\end{eqnarray}
Clearly, the left hand integral in (\ref{E00}) for any $0<\epsilon<1$ can be bounded by
\begin{eqnarray*}
\sum_{j}\frac{2}{|Q|^{2}}\int_{Q}\int_{Q}|T_{j}u(x)-T_{j}u(y)|^{\epsilon}dydx.
\end{eqnarray*}
Then by the same argument as the proof Theorem \ref{TH3}, we can get the desired estimate.
\end{proof}

\begin{proof}[Proof of Theorem \ref{TH5}]
We give a outline here, since it can be proved by a similar argument as above. Clearly, it suffices to show
\begin{eqnarray*}
\sum_{j}\frac{2}{|Q|^{2}}\int_{Q}\int_{Q}|T^{*}_{j}u(x)-T^{*}_{j}u(y)|dydx\lesssim Mu(x_{0}).
\end{eqnarray*}
For the case $l<1$. Break up this sum as follows
$$\sum\limits_{2^{j}<l^{-1}}+\sum\limits_{l^{-1}\leq2^{j}\leq l^{-\frac{1}{\varrho}}}
+\sum\limits_{l^{-\frac{1}{\varrho}}<2^{j}}.$$
%$$\left\{
%  \begin{array}{ll}
%    \sum\limits_{2^{j}<l^{-1}}
%+\sum\limits_{l^{-1}<2^{j}}, & \hbox{if $\varrho=0$;}    \\
%, & \hbox{if $0<\varrho<1$.}
%  \end{array}
%\right.
%$$
%If $\varrho=0$, Remark \ref{R4} and Lemma \ref{La6} gives that the corresponding sum is bounded by
%$$ \sum\limits_{2^{j}<l^{-1}}2^{j}lMu(x_{0})+\sum\limits_{l^{-1}<2^{j}}2^{-j\frac{n}{2}}l^{-\frac{n}{2}}Mf(x_{0})
%\lesssim Mf(x_{0}),$$
Then, we can get the desired estimate for the first term($2^{j}<l^{-1}$) and the second term($l^{-1}\leq2^{j}\leq l^{-\frac{1}{\varrho}}$) by Remark \ref{R41} and Lemma \ref{La6} respectively. As for the last term($l^{-\frac{1}{\varrho}}<2^{j}$) and the case $l>1$, it can be estimated by following lemma. So the proof is finished.
\end{proof}

\begin{lem}
Suppose  $0<\varrho<1$. For any positive integer $N>n$,
if $a\in L^{\infty}S^{-n(1-\varrho)}_{\varrho}$ then for $0<\theta<\frac{n}{2}(1-\varrho)$
\begin{eqnarray}
\frac{1}{|Q|}\int_{Q(x_{0},l)}|T^{*}_{j}u(x)|dx
\lesssim\big(2^{-j(\frac{n}{2}(1-\varrho)-\theta)}+2^{-j\varrho(n-N)}l^{n-N)}\big)
Mu(x_{0}).
\end{eqnarray}
\end{lem}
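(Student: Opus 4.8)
The plan is to mimic the proof of Lemma \ref{La4}, inequality (\ref{E7}), but now for the range $l^{-\frac{1}{\varrho}}<2^{j}$, where the cube $Q(x_0,l)$ is no longer comparable in size to the ball $2^{-j\varrho}$. As in Remark \ref{R41} and Lemma \ref{La6}, only the smoothness in $\xi$ of $a(y,\xi)$ is exploited, so the weaker hypothesis $a\in L^{\infty}S^{-n(1-\varrho)}_{\varrho}$ suffices. First I would write $T^{*}_{j}u(x)=\int K^{*}_{j}(x,y)u(y)\,dy$ and split the $y$-integral into the local piece $|y-x_0|\le 2^{-j\varrho+1}$ and the far piece $|y-x_0|>2^{-j\varrho+1}$. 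Since now $l^{-\frac{1}{\varrho}}<2^{j}$ forces $2^{-j\varrho}<l$, the local ball is contained in a fixed multiple of $Q$, and the far region is where $|y-x|\sim|y-x_0|$ for $x\in Q$.

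For the local piece, I would use H\"older's inequality in $x$ over $Q$ together with Minkowski's inequality to reduce matters to estimating $\big(\int_{\mathbb{R}^n}|K^{*}_{j}(x,y)|^2\,dx\big)^{1/2}$, then apply Parseval's identity (as in the proof of (\ref{E7})) to the $\xi$-integral $\int e^{i\langle x-y,\xi\rangle}a(y,\xi)\chi_j(\xi)\,d\xi$; writing $a_j(y,\xi)=c_j(y,\xi)|\xi|^{-n(1-\varrho)}$ with $c_j\in L^{\infty}S^{0}_{\varrho}$ bounded, Parseval's identity gives a factor $\big(\int_{E_j}|\xi|^{-2n(1-\varrho)}\,d\xi\big)^{1/2}\lesssim 2^{j(\frac{n}{2}-n(1-\varrho))}=2^{j(\frac{n}{2}(1-\varrho)-\frac{n}{2}(1-\varrho))}$... more precisely $2^{-jn(1-\varrho)+jn/2}$, and after dividing by $|Q|$ and using $|Q|^{-1}\int_{\text{local ball}}|u(y)|\,dy\lesssim Mu(x_0)$ adjusted by the volume ratio $(2^{-j\varrho}/l)^{n}$, one collects the bound $2^{-j(\frac{n}{2}(1-\varrho)-\theta)}Mu(x_0)$ for suitable small $\theta>0$ coming from the mismatch between the two scales; here the freedom to choose $0<\theta<\frac{n}{2}(1-\varrho)$ is exactly what absorbs the volume-ratio power. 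For the far piece, I would integrate by parts $N$ times in $\xi$ to produce the factor $|x-y|^{-N}\sim|y-x_0|^{-N}$, write $\partial^{N}_{\xi}a_j(y,\xi)=\tilde c_j(y,\xi)|\xi|^{-n(1-\varrho)-\varrho N}$ with $\tilde c_j\in L^{\infty}S^{0}_{\varrho}$ bounded, again apply Parseval in $x$, and bound $\int_{|y-x_0|>2^{-j\varrho+1}}|y-x_0|^{-N}|u(y)|\,dy\lesssim 2^{j\varrho(n-N)}Mu(x_0)$; together with the $\xi$-factor $2^{-j(n(1-\varrho)+\varrho N)+jn/2}$ and the volume normalization one gets the term $2^{-j\varrho(n-N)}l^{(n-N)}Mu(x_0)$ in the claimed estimate.

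The main obstacle is bookkeeping the scale mismatch in the local term: unlike in Lemma \ref{La4}, the ball $|y-x_0|\le 2^{-j\varrho+1}$ is now strictly smaller than $Q$, so a naive use of the Hardy--Littlewood maximal function over $Q$ loses a positive power of $2^{-j\varrho}/l$. I would handle this by enlarging the ball of integration in the maximal-function estimate back up to scale $l$ at the cost of that volume ratio, and then note $2^{-j\varrho}/l\le 1$ together with $2^{-jn(1-\varrho)+jn/2}$ being a negative power of $2^{j}$ when $\varrho<1$; any such negative power can be written as $2^{-j(\frac{n}{2}(1-\varrho)-\theta)}$ times a harmless $2^{-j\theta'}$ that eats the ratio, which is why the statement is phrased with an arbitrary $0<\theta<\frac{n}{2}(1-\varrho)$ rather than the sharp exponent. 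The far term, by contrast, is routine once the integration by parts and Parseval steps are in place, since $N$ can be taken as large as needed to make $\sum_j 2^{-j\varrho(n-N)}l^{n-N}$ (and the corresponding sum in the $l>1$ case) converge.
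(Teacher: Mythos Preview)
There is a genuine gap in the far piece. You split the $y$-integral at radius $2^{-j\varrho+1}$ and assert that in the far region $|y-x|\sim|y-x_0|$ for $x\in Q$. But under your own hypothesis $l^{-1/\varrho}<2^{j}$ one has $2^{-j\varrho}<l$, so the set $\{|y-x_0|>2\cdot 2^{-j\varrho}\}$ \emph{intersects} $Q(x_0,l)$; for $x\in Q$ and $y$ in this intersection, $|y-x|$ can be arbitrarily small, and the factor $|x-y|^{-N}$ produced by integration by parts is not dominated by $|y-x_0|^{-N}$. This is precisely the reverse of the situation in Lemma~\ref{La4}, where the standing assumption $2^{j}\le l^{-1/\varrho}$ guarantees $2^{-j\varrho}\ge l$ and hence that the far region lies outside a dilate of $Q$.

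The paper's argument splits $u$ at the scale of the cube instead: set $u_5=u\,\chi_{Q(x_0,2l)}$ and $u_6=u-u_5$. For $u_6$ one has $|y-x_0|>2l\ge 2|x-x_0|$, which now legitimately gives $|y-x|\sim|y-x_0|$, and the pointwise kernel bound after $N$ integrations by parts yields the second term of the lemma directly. For $u_5$ the paper does not use H\"older--Minkowski--Parseval at all; it observes that $a_j\in L^{\infty}S^{-\frac{n}{2}(1-\varrho)-\theta}_{\varrho}$ with seminorms $\lesssim 2^{-j(\frac{n}{2}(1-\varrho)-\theta)}$ and invokes the $L^{1}$-boundedness of $T^{*}_{j}$, available for symbols of order strictly below $-\tfrac{n}{2}(1-\varrho)$, to obtain $\tfrac{1}{|Q|}\|T^{*}_{j}u_5\|_{L^1}\lesssim 2^{-j(\frac{n}{2}(1-\varrho)-\theta)}Mu(x_0)$. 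Thus $\theta$ is exactly the room needed to place $a_j$ in a strictly subcritical class; it is not a volume-ratio artefact. Indeed, your local computation with the split at $2^{-j\varrho}$, carried through correctly, gives $l^{-n/2}2^{-jn/2}\le 2^{-j\frac{n}{2}(1-\varrho)}Mu(x_0)$ with no $\theta$ whatsoever --- confirming that the difficulty lies entirely in the far piece and that your diagnosis of where $\theta$ enters is off.
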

\begin{proof}
We show a outline here.
Set $u_{5}(x)=u(x)\chi_{Q(x_{0},2l)}(x)$ and $u_{6}(x)=u(x)-u_{5}(x)$.
Then
\begin{eqnarray}\label{E19}
\frac{1}{|Q|}\int_{Q(x_{0},l)}|T^{*}_{j}u(x)|dx
\leq\frac{1}{|Q|}\int_{Q(x_{0},l)}|T^{*}_{j}u_{5}(x)|dx+
\frac{1}{|Q|}\int_{Q(x_{0},l)}|T^{*}_{j}u_{6}(x)|dx.
\end{eqnarray}

Notice that $a_{j}(y,\xi)\in S^{-\frac{n}{2}(1-\varrho)-\theta}_{\varrho,\delta}$ with bounds $\lesssim 2^{-j\frac{n}{2}(1-\varrho)+j\theta}$. The $L^{1}$-estimate of $T^{*}_{j}$ give that
\begin{eqnarray}\label{E20}
\frac{1}{|Q|}\int_{Q}|T^{*}_{j}u_{5}(x)|dx
&\lesssim& 2^{-j\frac{n}{2}(1-\varrho)+j\theta}Mu(x_{0}).
\end{eqnarray}

Notice that $|y-x|\sim|y-x_{0}|$ for $\forall x\in Q(x_{0},l)$ and $\forall y\in Q^{C}(x_{0},2l)$. So integrating by parts gives that
\begin{eqnarray}\label{E21}
|T^{*}_{j}u_{6}(x)|
\lesssim2^{-j\varrho(n-N)}l^{n-N)}
Mu(x_{0}).
\end{eqnarray}
Clearly, the desired estimate follows from \eqref{E19}, \eqref{E20} and \eqref{E21}.
\end{proof}

\section{The proof of continuity on Hardy spaces}
A tempered distribution $f$ belongs to Hardy spaces $H^{p}(\mathbb{R}^{n})$ if, for some $\phi\in\mathscr{S}$ with $\int_{\mathbb{R}^{n}}\phi(x)dx\neq0$, the maximal operator
$$\mathscr{M}_{\phi}f(x):=\sup\limits_{t>0}|f*\phi_{t}(x)|$$
is in $L^{p}(\mathbb{R}^{n})$, where $\phi_{t}(x)=t^{-n}\phi(x/t)$. The continuity properties of pseudo-differential operator $T_{a}$ and operators $T^{*}_{a}$ acting on Hardy spaces $H^{p}(\mathbb{R}^{n})$ will be done by standard atomic and molecular technique \cite{Taibleson}.
\begin{defn}
Let $0<p\leq1\leq q\leq\infty$, $p\neq q$, and the nonnegtive integer $s\geq[n(\frac{1}{p}-1)]$. A function $a(x)\in L^{q}(\mathbb{R}^{n})$ is called a $(p,q,s)$ atom with the center at $x_{0}$, if it satisfies the following conditions:
\begin{equation*}
\begin{array}{c}
  \displaystyle (1)~\supp a_{Q}\subset Q; \quad (2)~\int_{\mathbb{R}^{n}}|a_{Q}(y)|^{q}\leq |Q|^{1-\frac{q}{p}};\quad (3)~\int_{\mathbb{R}^{n}}a_{Q}(y)y^{\alpha}dy=0, 0\leq|\alpha|\leq s.
\end{array}
\end{equation*}
\end{defn}

\begin{defn}(Taibleson and Weiss \cite{Taibleson})
Let $0<p\leq1\leq q\leq\infty$, $p\neq q$, and the nonnegtive integer $s\geq[n(\frac{1}{p}-1)]$, $\epsilon>\max\{\frac{s}{n},\frac{1}{p}-1\}$,$a_{0}=1-\frac{1}{p}+\epsilon$ and $b_{0}=1-\frac{1}{q}+\epsilon$. A $(p,q,s,\epsilon)$ molecule center at $x_{0}$ is a function $M$ such that $M(x)\in L^{q}(\mathbb{R}^{n})$ and $|x|^{nb_{0}}M(x)\in L^{q}(\mathbb{R}^{n})$ satisfying:
\begin{equation*}
\begin{array}{c}
  \displaystyle (1)~\|M\|^{a_{0}}_{L^{q}}\|M(\cdot)|\cdot-x_{0}|^{nb_{0}}\|^{b_{0}-a_{0}}_{L^{q}}<\infty;\quad
(2)\int_{\mathbb{R}^{n}}M(x)x^{\alpha}dx=0, 0\leq|\alpha|\leq s.
\end{array}
\end{equation*}
\end{defn}

To prove Theorem \ref{TH6}, it suffices to show
\begin{prop}\label{P2}
Let $a_{Q}$ be a $(p,2,2t)$ atom with $0<p<1$ and $t$ be an even integer $t>\frac{n}{p}$.
\begin{enumerate}
  \item If $T^{*}_{a}$ defined as (\ref{df*}) satisfies condition (1) in Theorem \ref{TH6}. Then $T^{*}_{a}a_{Q}$ is a $(p,1,[n(\frac{1}{p}-1)],\frac{t}{n}-\frac{1}{2})$ molecule.
  \item If $T_{a}$ defined as (\ref{df}) satisfies condition (2) in Theorem \ref{TH6}. Then $T_{a}a_{Q}$ is a $(p,1,[n(\frac{1}{p}-1)],\frac{t}{n}-\frac{1}{2})$ molecule.
\end{enumerate}
\end{prop}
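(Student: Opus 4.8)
The plan is to check that $M:=T^{*}_{a}a_{Q}$ satisfies the two conditions defining a $(p,1,[n(\tfrac1p-1)],\tfrac tn-\tfrac12)$ molecule centred at $x_{0}$, the centre of $Q$ (with $l$ its side length), and likewise for $M:=T_{a}a_{Q}$. The cancellation condition will come straight from the hypothesis; the size/decay condition is the heart of the matter and is obtained by the same machinery as in Section~2. I treat part (1) and indicate at the end the single change needed for part (2).

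\emph{Cancellation.} A $(p,2,2t)$ atom is compactly supported, lies in $L^{2}$, and annihilates $y^{\beta}$ for $|\beta|\le 2t$; since $t>n/p$ we have $2t>[n(\tfrac1p-1)]$, hence $a_{Q}\in L^{2}_{c,2t}(\mathbb R^{n})\subset L^{2}_{c,[n(1/p-1)]}(\mathbb R^{n})$. Therefore hypothesis (1) of Theorem \ref{TH6}, that is (\ref{cb}), applies to $f=a_{Q}$ and yields $\int_{\mathbb R^{n}}x^{\alpha}T^{*}_{a}a_{Q}(x)\,dx=0$ for $|\alpha|\le[n(\tfrac1p-1)]$, which is exactly the molecule's vanishing-moment condition (its convergence being a byproduct of the size estimate below, since $nb_{0}=t-\tfrac n2>n(\tfrac1p-\tfrac12)>[n(\tfrac1p-1)]$). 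One also checks the admissibility inequalities: with $s=[n(\tfrac1p-1)]$ and $\epsilon=\tfrac tn-\tfrac12$ one has $\epsilon>\max\{\tfrac sn,\tfrac1p-1\}$ because $t>n/p$ gives $\tfrac tn-\tfrac12>\tfrac1p-\tfrac12>\tfrac1p-1\ge\tfrac sn$; and for $q=1$, $a_{0}=1-\tfrac1p+\epsilon>0$, $b_{0}=\epsilon$, $b_{0}-a_{0}=\tfrac1p-1>0$.

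\emph{Reduction and mechanism.} The size condition asks for $\|M\|_{L^{1}}^{a_{0}}\big\|\,|\cdot-x_{0}|^{nb_{0}}M\big\|_{L^{1}}^{b_{0}-a_{0}}\lesssim1$ uniformly; since $a_{0}>0$, $b_{0}-a_{0}>0$, and the exponents of $l$ cancel in this product ($nb_{0}[(1-\tfrac1p)+(b_{0}-a_{0})]=0$, exactly as for an $H^{p}$-atom), it suffices to prove
\[
\|T^{*}_{a}a_{Q}\|_{L^{1}}\lesssim|Q|^{1-1/p}\qquad\text{and}\qquad \big\|\,|x-x_{0}|^{\,t-n/2}\,T^{*}_{a}a_{Q}(x)\big\|_{L^{1}}\lesssim l^{\,t-n/2}|Q|^{1-1/p},
\]
with constants independent of $a$ and of the atom, i.e.\ precisely the two bounds $a_{Q}$ itself satisfies. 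As in the proofs of Theorems \ref{TH2} and \ref{TH3} I reduce to $a(x,\xi)=0$ for $|\xi|\le1$, put $x_{0}=0$, and write $T^{*}_{a}=\sum_{j\ge0}T^{*}_{j}$ as in (\ref{de}). Both bounds are assembled block by block: for each $j$ one estimates $\int_{\mathbb R^{n}}|T^{*}_{j}a_{Q}|$ (and its weighted version) by splitting $\mathbb R^{n}$ at a $j$-dependent radius $\sim\max\{l,2^{-j\varrho}\}$ (the auxiliary radii of Lemmas \ref{L9} and \ref{La00}). On the ball one uses Hölder and the $L^{2}$-bound for the normalised piece of $T^{*}_{j}$, which for the \emph{right} symbol follows from the exact identity $\int_{\mathbb R^{n}}|K^{*}_{j}(x,y)|^{2}dx=c\int_{\mathbb R^{n}}|a_{j}(y,\xi)|^{2}d\xi$ (Parseval in $x$, legitimate with no boundedness hypothesis because in $K^{*}_{j}$ the variable $y$ sits only in amplitude and phase), so that $c_{j}(y,\xi)=a_{j}(y,\xi)|\xi|^{\,n(1-\varrho)(1/p-1/2)}\in S^{0}_{\varrho,\delta}$ with the right size. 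Outside the ball one integrates by parts in $\xi$ (gaining $|x-y|^{-N}$ for $2^{-j\varrho N}$) and uses the moments of $a_{Q}$, subtracting from $K^{*}_{j}(x,\cdot)$ its Taylor polynomial about $0$ to an order chosen so that the resulting factor $(2^{j}l)^{c}$, $c>0$, is favourable. This is the mechanism of Lemmas \ref{L8}, \ref{L3}, \ref{La4}, re-run with the normalisation $a\in S^{-n(1-\varrho)(1/p-1/2)}_{\varrho,\delta}$.

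\emph{Summation, main obstacle, and part (2).} The series $\sum_{j}$ is not geometric on its own ($T^{*}_{a}$ is not $L^{1}$-bounded), so one splits it, as in the proof of Theorem \ref{TH2}, into $2^{j}\le l^{-1}$ (Lemma \ref{L8}-type Lipschitz bounds), $l^{-1}<2^{j}\le l^{-1/\varrho}$ (Lemmas \ref{L9}/\ref{La4}), and $2^{j}>l^{-1/\varrho}$ (Lemmas \ref{L70}/\ref{L1}), with the dyadic-in-$\delta$ refinement of Lemmas \ref{L7}/\ref{La5} when $0\le\varrho<\delta<1$ and the $\varrho=0$ substitutes Lemmas \ref{La0} and \ref{La00}; the cases $\varrho=1$ and $l\ge1$ reduce at once to the $L^{2}$-bound and integration by parts. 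The main obstacle is the bookkeeping in the small-cube regime $l<1$: one must verify that in each range the favourable power $(2^{j}l)^{c}$ (or, in the far part, the polynomial decay in $x$), the symbol size $2^{-jn(1-\varrho)/p+\cdots}$, and the splitting radius combine to a convergent series summing to $\lesssim|Q|^{1-1/p}$, and to $\lesssim l^{\,t-n/2}|Q|^{1-1/p}$ when the weight $|x-x_{0}|^{t-n/2}$ is carried along --- matching exponents across the overlapping ranges, and controlling the geometric-in-$\delta$ inner sums, is where the effort goes. For part (2), the only change is that in $K_{j}(x,\cdot)$ the variable $x$ sits in both amplitude and phase, so Parseval is unavailable and one uses instead the sharp $L^{2}$-boundedness of $T_{c_{j}}$ with $c_{j}\in S^{-\frac n2\max\{0,\delta-\varrho\}}_{\varrho,\delta}$; this is exactly why part (2) carries the extra order $-\tfrac n2\max\{0,\delta-\varrho\}$, and with that one substitution the argument is identical.
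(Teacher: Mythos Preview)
Your strategy is the paper's: verify cancellation via (\ref{cb}), reduce the molecule size condition to an unweighted and a weighted $L^{1}$ bound, and prove each by a Littlewood--Paley decomposition with Parseval on a near region and integration by parts (using the atom's moments) on a far region. There is, however, a mis-referencing that would derail a literal execution.

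The Section~2 lemmas you invoke (Lemmas \ref{L8}, \ref{L9}, \ref{La4}, \ref{L7}, \ref{La5}, \ref{L70}, \ref{L1}) all estimate the \emph{average of $T^{*}_{j}u$ over a fixed small cube $Q(x_{0},l)$}, which is the quantity entering $M^{\sharp}$. What is needed here is the \emph{global} integral $\int_{\mathbb R^{n}}|T^{*}_{j}a_{Q}(x)|\,dx$ (and its $|x|^{nb_{0}}$-weighted variant), where one integrates in the output variable $x$ over all of $\mathbb R^{n}$ while the atom sits in the input; the roles of inner and outer variables are reversed relative to Section~2. The paper therefore proves new estimates for this purpose (Lemmas \ref{La2}, \ref{La1}, \ref{R3}, \ref{L10}): in the low-frequency range $2^{j}\le l^{-1}$ one splits the $x$-integral at a radius $T=l^{t/(2N_{1})}2^{j(t/(2N_{1})-\varrho)}$ and exploits the atom's cancellation via a Taylor expansion of $e^{-i\langle y,\xi\rangle}(\triangle_{\xi})^{\alpha_{2}}a_{j}(y,\xi)$ in $y$; in the middle range one splits at $2^{-j\varrho}$; in the high range one splits at $l$. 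Your paragraph ``Reduction and mechanism'' describes exactly this, so the ideas are in place, but the Section~2 citations must be replaced by these re-derived Section~3 estimates. Two further remarks: the dyadic-in-$\delta$ refinement you propose for the middle range is unnecessary here, since for $p<1$ Lemma \ref{La1} already gives the geometric factor $2^{-jn(1-\varrho)(1/p-1)}$ uniformly in $\delta$ and the middle-range sum converges directly to $l^{\varrho n(1-1/p)}$ (which is in fact stronger than your target $|Q|^{1-1/p}$); and your computation $b_{0}=\epsilon=\tfrac{t}{n}-\tfrac12$ (hence weight $|x|^{t-n/2}$) is the one matching the molecule definition, whereas the paper works with the larger weight $|x|^{t}$ --- either closes the argument since the product $\|M\|_{L^{1}}^{a_{0}}\||\cdot|^{nb_{0}}M\|_{L^{1}}^{b_{0}-a_{0}}$ is scale-invariant in both conventions.
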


\begin{lem}\label{La2}
Let $0<p\leq1$, $t\geq [n(\frac{1}{p}-1)]$ and $a_{Q}$ is a $(p,2,2t)$-atom with the center at the origin and $Q=Q(0,l)$ is a cube on which $a_{Q}$ is supported. Suppose $0<l<1$, $0\leq\varrho\leq1$ and $0\leq\delta<1$. For any positive integer $j$ with $2^{j}\leq l^{-1}$ and any positive integer $2N_{1}>\frac{n}{2}$, if $a\in S^{-n(1-\varrho)(\frac{1}{p}-\frac{1}{2})}_{\varrho,\delta}$ then
\begin{eqnarray}
\int_{\mathbb{R}^{n}}|T^{*}_{j}a_{Q}(x)|^{q}dx
&\lesssim&2^{jqn\big(\frac{t}{2N_{1}}(\frac{1}{q}-\frac{1}{2})+\varrho(\frac{1}{p}-\frac{1}{q})+(1-\frac{1}{p})\big)}
l^{qn\big(\frac{t}{2N_{1}}(\frac{1}{q}-\frac{1}{2})+(1-\frac{1}{p})\big)};\label{E14}\\
\!\!\!\!\!\!\!\!\!\int_{\mathbb{R}^{n}}|x|^{qt}|T^{*}_{j}a_{Q}(x)|^{q}dx
\!\!&\lesssim&\!\!2^{jqn\big(\frac{t}{2N_{1}}(\frac{1}{q}-\frac{1}{2})+\varrho(\frac{1}{p}-\frac{1}{q})+(1-\frac{1}{p})\big)+jqt(1-\varrho)}
l^{qn\big(\frac{t}{2N_{1}}(\frac{1}{q}-\frac{1}{2})+(1-\frac{1}{p})\big)+qt}.\label{e140}
\end{eqnarray}
\end{lem}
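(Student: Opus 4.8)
The plan is to estimate the two integrals by the same device used in Lemmas \ref{L8}, \ref{L9} and \ref{La4}: split the $y$-integration in $T^{*}_{j}a_Q(x)=\int K^{*}_j(x,y)a_Q(y)\,dy$ into a region where $|x|$ is comparable to $l$ (exploiting that $a_Q$ is supported in $Q(0,l)$) and a region where $|x|$ is large, and on the latter integrate by parts in $\xi$ to produce decay in $|x-y|\sim|x|$. Since the Parseval identity is not available for the dual operator, I will recast the frequency-localized piece as a standard pseudo-differential operator of order $0$ (up to the $\max\{\delta-\varrho,0\}$ loss) applied to an explicit $L^2$ function, exactly as in the proof of Lemma \ref{L8}: write $c_j(x,\xi)=a_j(x,\xi)|\xi|^{n(1-\varrho)(\frac1p-\frac12)}$ so that $c_j\in S^{0}_{\varrho,\delta}$ (or $S^{-\frac n2\max\{\delta-\varrho,0\}}_{\varrho,\delta}$ in general) with bounds $\lesssim 2^{-jn(1-\varrho)(\frac1p-\frac12)}$, and absorb the compensating factor $|\xi|^{-n(1-\varrho)(\frac1p-\frac12)}\chi_j(\xi)$ into the Fourier transform of an auxiliary function. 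The $L^2\to L^q$ mapping of $T_{c_j}$ (Lemma \ref{LL}, with $q\ge 2$) then transfers everything to an $L^2$ bound in $\xi$, and the size/moment estimates of the $(p,2,2t)$-atom $a_Q$ give the dependence on $l$.

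The key steps, in order: (i) use the vanishing moments of $a_Q$ up to order $2t$ to gain, after a Taylor expansion of $e^{i\langle x-y,\xi\rangle}$ in $y$, a factor $|\xi|^{t}l^{t}$ — more precisely, write $a_Q(y)=$ its high-order Taylor remainder and pair it against $\xi^{\alpha}$, $|\alpha|=t$, using $\|y^{\alpha}a_Q\|_{L^2}\lesssim l^{t}\|a_Q\|_{L^2}\lesssim l^{t+n(\frac12-\frac1p)}$; this is the source of the exponent $\frac{t}{2N_1}(\frac1q-\frac12)$ after interpolating the $N_1$-fold iteration with the trivial bound. (ii) For $\int|T^*_j a_Q|^q$, apply Hölder on $|x|\lesssim$ (some threshold depending on $2^j,l$) with the $L^2$-bound coming from step (i) combined with $T_{c_j}:L^2\to L^q$, and on $|x|\gtrsim$ threshold integrate by parts $2N_1$ times in $\xi$ to produce $|x|^{-2N_1}$-decay, choosing the threshold to balance the two contributions. (iii) For the weighted integral $\int|x|^{qt}|T^*_j a_Q|^q$, repeat (ii) but carry the extra weight $|x|^{qt}$; since the relevant integration region has $|x|\lesssim 2^{-j\varrho}+l$, the weight contributes the extra factors $2^{jqt(1-\varrho)}$ and $l^{qt}$ visible in \eqref{e140} (here one uses $2^j\le l^{-1}$ so that $2^{-j\varrho}$ dominates the relevant scale). (iv) Collect the powers of $2^j$ and $l$ and check they match the stated exponents.

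The main obstacle I anticipate is bookkeeping the exponents so that the interpolation parameter $N_1$ enters exactly as $\frac{t}{2N_1}$: one must iterate the integration-by-parts/Taylor-remainder argument $2N_1$ times, interpolate the resulting gain $(2^{j}l)^{t}$ against the no-gain estimate with weights $|x|^{2N_1}$, and verify that the $L^2$ norms of the auxiliary functions $|\xi|^{-n(1-\varrho)(\frac1p-\frac12)}\chi_j(\xi)$ and its $\xi$-derivatives scale correctly in $j$. A secondary subtlety, already flagged in the proof of Lemma \ref{L8}, is that replacing Parseval by the $L^2\to L^q$ boundedness of genuine PDOs forces the extra order $-\frac n2\max\{\delta-\varrho,0\}$; here this is harmless because it only improves the decay in $j$ and the final exponents in \eqref{E14}–\eqref{e140} are stated without tracking that gain, so one simply discards it. Once the $L^2$ side is under control, the passage to $L^q$ and the assembly of the final bound are routine.
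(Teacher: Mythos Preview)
Your proposal contains a genuine gap rooted in a reversed reading of where Parseval is available. For $T^{*}_{j}$ the kernel is $K^{*}_{j}(x,y)=(2\pi)^{-n}\int e^{i\langle x-y,\xi\rangle}a_{j}(y,\xi)\,d\xi$, so for \emph{fixed} $y$ the map $x\mapsto K^{*}_{j}(x,y)$ is an honest inverse Fourier transform of $a_{j}(y,\cdot)$; hence $\bigl(\int_{\mathbb{R}^{n}}|K^{*}_{j}(x,y)|^{2}\,dx\bigr)^{1/2}=\bigl(\int|a_{j}(y,\xi)|^{2}\,d\xi\bigr)^{1/2}$ by Parseval, with no PDO theory needed. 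The obstruction you recall from Lemma~\ref{L8} arose because there the $L^{p'}$ norm was taken in the $y$ variable, on which the symbol $a_{j}(y,\xi)$ depends; here the outer $L^{q}$ norm is in $x$, and the symbol is $x$-independent. This is exactly why the paper's Remark~\ref{R5} says the $T_{j}$ analogue (symbol depending on $x$) requires replacing Parseval by $L^{2}$-boundedness of PDOs, at the cost of the extra order $-\tfrac{n}{2}\max\{0,\delta-\varrho\}$.

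Because you invoke PDO $L^{2}$-boundedness instead of Parseval, your rescaled symbol $c_{j}=a_{j}|\xi|^{n(1-\varrho)(\frac1p-\frac12)}$ lies only in $S^{0}_{\varrho,\delta}$, which is \emph{not} $L^{2}$-bounded when $\delta>\varrho$; you would need $c_{j}\in S^{-\frac{n}{2}\max\{\delta-\varrho,0\}}_{\varrho,\delta}$, i.e.\ the stronger hypothesis $a\in S^{-n(1-\varrho)(\frac1p-\frac12)-\frac{n}{2}\max\{\delta-\varrho,0\}}_{\varrho,\delta}$, which the lemma does not assume. Your remark that this ``only improves the decay in $j$'' and can be ``discarded'' has it backwards: the extra order is a \emph{requirement} your method imposes, not a gain it produces. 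The paper's actual argument splits the $x$-integral at the threshold $T=l^{t/(2N_{1})}2^{j(t/(2N_{1})-\varrho)}$; on $|x|\le 2T$ it uses H\"older, Minkowski, and Parseval directly (no moments); on $|x|>2T$ it integrates by parts $2N_{1}$ times in $\xi$ to produce $|x|^{-2N_{1}}$, then Taylor-expands $e^{-i\langle y,\xi\rangle}(\triangle_{\xi})^{\alpha_{2}}a_{j}(y,\xi)$ in $y$ to exploit the vanishing moments of $a_{Q}$, and finishes again with H\"older, Minkowski, Parseval. For \eqref{e140} the paper first rewrites $|x|^{t}T^{*}_{j}a_{Q}(x)$ as a $\theta$-average of operators of the same form and reduces to \eqref{E14}.
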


\begin{proof}
We prove (\ref{E14}) first. Denote
$$T=l^{\frac{t}{2N_{1}}}2^{j\frac{t}{2N_{1}}-j\varrho},$$
and break up the integral with respect to the variable $x$ as follows
\begin{eqnarray}\label{d11}
\int_{|x|\leq 2T}
+\int_{|x|> 2T}.
\end{eqnarray}
Next, we show that both of them is bounded by the right hand in \ref{E14}.
 H\"{o}lder's inequality and Minkowski's inequality show that the first integral in (\ref{d11})is bounded by
\begin{eqnarray*}
T^{n(1-\frac{q}{2})}
\bigg(\int_{Q(0,l)}|a_{Q}(y)|\big(\int_{\mathbb{R}^{n}}|\int_{\mathbb{R}^n} e^{ i \langle x-y,\xi\rangle}a(y,\xi) d\xi|^{2} dx\big)^{\frac{1}{2}}dy\bigg)^{q}.
\end{eqnarray*}
Recall $a_{Q}$ is a $(p,2,2t)$-atom and $T=l^{\frac{t}{2N_{1}}}2^{j\frac{t}{2N_{1}}-j\varrho}$. Then the desired estimate can be get by Parseval's identity.

Next, we estimate the second integral in (\ref{d11}). Integrating by parts gives that for any multi-index $\alpha$ with $|\alpha|=N_{1}$
\begin{eqnarray*}
&&\int_{\mathbb{R}^n}e^{ i \langle x,\xi\rangle}\int_{\mathbb{R}^n} e^{ -i \langle y,\xi\rangle}a_{j}(y,\xi) a_{Q}(y)dy d\xi\\
&=&|x|^{-2N_{1}}\sum\limits_{|\alpha_{1}|+|\alpha_{2}|=|\alpha|}\int_{\mathbb{R}^n}e^{ i \langle x,\xi\rangle}\int_{\mathbb{R}^n}e^{ -i \langle y,\xi\rangle}(\triangle_{\xi})^{\alpha_{2}}\big(a_{j}(y,\xi) \big) y^{2\alpha_{1}}a_{Q}(y)dy d\xi.
\end{eqnarray*}
For any fixed $\xi\in \mathbb{R}^{n}$, let $P_{\xi}$ be the Taylor polynomial in $y$ of degree $t-2|\alpha_{1}|-1$ of $e^{ -i \langle y,\xi\rangle}(\triangle_{\xi})^{\alpha_{2}}\big(a_{j}(y,\xi) \big)$ about the origin. Then
\begin{eqnarray*}
&&\int_{\mathbb{R}^n}e^{ -i \langle y,\xi\rangle}(\triangle_{\xi})^{\alpha_{2}}\big(a_{j}(y,\xi) \big) y^{2\alpha_{1}}a_{Q}(y)dy \\
&=& \int_{\mathbb{R}^n}\big(e^{ -i \langle y,\xi\rangle}(\triangle_{\xi})^{\alpha_{2}}a_{j}(y,\xi)-P(y)\big)y^{2\alpha_{1}}a_{Q}(y)dy\\
&=& \sum\limits_{|\beta_{1}|+|\beta_{2}|=|\beta|}\int_{\mathbb{R}^n}e^{ -i \langle \bar{y},\xi\rangle}\xi^{\beta_{1}}\big(\partial^{\beta_{2}}_{y}(\triangle_{\xi})^{\alpha_{2}} \big)\big(a_{j}(\bar{y},\xi)\big)y^{2\alpha_{1}+\beta}a_{Q}(y)dy,
\end{eqnarray*}
where $|\beta|=t-2|\alpha_{1}|$ and $\bar{y}$ is a point around the origin. Therefore, we can write
\begin{eqnarray}\label{f1}
T^{*}_{j}a_{Q}(x)\nonumber
&=&\frac{1}{|x|^{2N_{1}}}\sum\limits_{|\alpha_{1}|+|\alpha_{2}|=|\alpha|}\sum\limits_{|\beta_{1}|+|\beta_{2}|=|\beta|}\\
&\times&\int_{\mathbb{R}^n}\int_{\mathbb{R}^n}e^{ i \langle x-\bar{y},\xi\rangle}\xi^{\beta_{1}}\big(\partial^{\beta_{2}}_{y}(\triangle_{\xi})^{\alpha_{2}} \big)\big(a_{j}(\bar{y},\xi)\big)y^{2\alpha_{1}+\beta}a_{Q}(y)d\xi dy.
\end{eqnarray}
By  H\"{o}lder's inequality and Minkowski's inequality, the second integral in (\ref{d11}) is bounded by
\begin{eqnarray*}
&&\sum\limits_{|\alpha_{1}|+|\alpha_{2}|=|\alpha|}\sum\limits_{|\beta_{1}|+|\beta_{2}|=|\beta|}
\big(\int_{|x|>T}\frac{1}{|x|^{2N_{1}(\frac{2q}{2-q})}}dx\big)^{1-\frac{q}{2}}\\
&\times&\bigg(\int_{Q(0,l)}|a_{Q}(y)|\big(\int_{\mathbb{R}^{n}}|\int_{\mathbb{R}^n} e^{ i \langle x-\bar{y},\xi\rangle}\xi^{\beta_{1}}\big(\partial^{\beta_{2}}_{y}(\triangle_{\xi})^{\alpha_{2}} \big)\big(a_{j}(\bar{y},\xi)\big)y^{2\alpha_{1}+\beta} d\xi |^{2}dx \big)^{\frac{1}{2}}dy\bigg)^{q}.
\end{eqnarray*}
Recall $a_{Q}$ is a $(p,2,2t)$-atom and $T=l^{\frac{t}{2N_{1}}}2^{j\frac{t}{2N_{1}}-j\varrho}$. Then the desired estimate can be get by Parseval's identity.

The main idea to prove (\ref{e140}) is writing $|x|^{t}|T^{*}_{j}a_{Q}(x)|$ as sum of $T^{*}$ first, then following the same method as above to estimate these operator. To this end, we fixed $|\alpha|=\frac{t}{2}$ in (\ref{f1}) and give a clear relationship between $y$ and $\bar{y}$.
We write
\begin{eqnarray*}
&&e^{ -i \langle y,\xi\rangle}(\triangle_{\xi})^{\alpha_{2}}\big(a_{j}(y,\xi) \big)\\
&=&P_{\xi}(y)+C_{t}\int^{1}_{0}(1-\theta)^{t-2|\alpha_{2}|}\bigg(\partial^{2\alpha-2\alpha_{2}}_{y}
e^{ -i \langle \cdot,\xi\rangle}(\triangle_{\xi})^{\alpha_{2}}\big(a_{j}(\cdot,\xi) \big)\bigg)(\theta y)y^{2\alpha-2\alpha_{2}}d\theta
\end{eqnarray*}
The cancellation condition of $a_{Q}$ gives
\begin{eqnarray*}
|x|^{t}T^{*}_{j}a_{Q}(x)&=&C_{t}
\sum\limits_{|\alpha_{1}|+|\alpha_{2}|=|\alpha|}
\int^{1}_{0}(1-\theta)^{t-2|\alpha_{2}|}\sum\limits_{|\beta_{1}|+|\beta_{2}|=|\beta|}\\
&\times&\int_{\mathbb{R}^n}\int_{\mathbb{R}^n}e^{ i \langle x-\theta y,\xi\rangle}\xi^{\beta_{1}}\big(\partial^{\beta_{2}}_{y}(\triangle_{\xi})^{\alpha_{2}} \big)\big(a_{j}(\theta y,\xi)\big)y^{2\alpha_{1}+\beta}a_{Q}(y)d\xi dyd\theta,
\end{eqnarray*}
where $|\beta|=t-2|\alpha_{1}|$. Denote
$a_{j,\beta_{1},\beta_{2}}(y,\xi)=\xi^{\beta_{1}}\big(\partial^{\beta_{2}}_{y}(\triangle_{\xi})^{\alpha_{2}} \big)\big(a_{j}(y,\xi)\big)$
and $f_{\alpha_{1},\beta,Q}(y)=y^{2\alpha_{1}+\beta}a_{Q}(y)$.
Then
\begin{eqnarray*}
|x|^{t}T^{*}_{j}a_{Q}(x)&=&C_{t}
\sum\limits_{|\alpha_{1}|+|\alpha_{2}|=|\alpha|}
\int^{1}_{0}(1-\theta)^{t-2|\alpha_{2}|}\theta^{-n}\sum\limits_{|\beta_{1}|+|\beta_{2}|=|\beta|}
\big(T^{*}_{a_{j,\beta_{1},\beta_{2}}}f_{\alpha_{1},\beta,Q}(\theta\cdot)\big)(x)d\theta,
\end{eqnarray*}
Moreover,
\begin{eqnarray}\label{E22}
\int_{\mathbb{R}^{n}}|x|^{qt}|T^{*}_{j}a_{Q}(x)|^{q}dx
&\lesssim&\bigg(\sum\limits_{|\alpha_{1}|+|\alpha_{2}|=|\alpha|}
\int^{1}_{0}(1-\theta)^{t-2|\alpha_{2}|}\theta^{-n}\nonumber\\
&\times&\sum\limits_{|\beta_{1}|+|\beta_{2}|=|\beta|}
\big(\int_{\mathbb{R}^{n}}|\big(T^{*}_{a_{j,\beta_{1},\beta_{2}}}f_{\alpha_{1},\beta,Q}(\theta\cdot)\big)(x)|^{q}dx\big)^{\frac{1}{q}}d\theta\bigg)^{q}
,
\end{eqnarray}
Notice that
$a_{j,\beta_{1},\beta_{2}}\in  S^{-n(1-\varrho)(\frac{1}{p}-\frac{1}{2})+t(1-\varrho)}_{\varrho,\delta}$ with its normal independent of $j,\beta_{1},\beta_{2}$
and $f_{\alpha_{1},\beta,Q}(x)$ satisfies $\supp f_{\alpha_{1},\beta,Q}\subset Q$, $\int_{\mathbb{R}^{n}}|f_{\alpha_{1},\beta,Q}(x)|dx\leq |Q|^{1-\frac{1}{p}+\frac{t}{n}}$ and $\int_{\mathbb{R}^{n}}f_{\alpha_{1},\beta,Q}(y)y^{\alpha}dy=0, 0\leq|\alpha|\leq t$, since $a_{Q}$ is a $(p,2,2t)$ atom.

By the same argument as (\ref{E14}), we can get
\begin{eqnarray*}
\int_{\mathbb{R}^{n}}|\big(T^{*}_{a_{j,\beta_{1},\beta_{2}}}f_{\alpha_{1},\beta,Q}(\theta\cdot)\big)(x)|^{q}dx
&\lesssim& \theta^{qn}2^{jqn\big(\frac{t}{2N_{1}}(\frac{1}{q}-\frac{1}{2})+\varrho(\frac{1}{p}-\frac{1}{q})+(1-\frac{1}{p})\big)+jqt(1-\varrho)}\\
&\times&l^{qn\big(\frac{t}{2N_{1}}(\frac{1}{q}-\frac{1}{2})+(1-\frac{1}{p})\big)+qt}.
\end{eqnarray*}
By substituting this into (\ref{E22}), the desired estimate can be gotten immediately, since that $\int^{1}_{0}(1-\theta)^{t-2|\alpha_{2}|}d\theta\lesssim1$ is always true for $2|\alpha_{1}|\leq t$.
\end{proof}

\begin{lem}\label{La1}
Let $Q(0,l)$ be a fixed cube with side length $l<1$. Suppose $0<q<2$, $0<p<1$, $0<\varrho<1$ and $0\leq\delta<1$. For any positive integer $j$ with $l^{-1}\leq2^{j}<l^{-\frac{1}{\varrho}}$ if $a\in S^{-n(1-\varrho)(\frac{1}{p}-\frac{1}{2})}_{\varrho,\delta}$ then
\begin{eqnarray}
\int_{\mathbb{R}^{n}}|T^{*}_{j}a_{Q}(x)|^{q}dx
&\lesssim&2^{jqn\big(\varrho(\frac{1}{p}-\frac{1}{q})+(1-\frac{1}{p})\big)}l^{qn(1-\frac{1}{p})};\label{E16}\\
\int_{\mathbb{R}^{n}}|x|^{qt}|T^{*}_{j}a_{Q}(x)|^{q}dx
&\lesssim&2^{jqn\big(\varrho(\frac{1}{p}-\frac{1}{q})+(1-\frac{1}{p})\big)-jqt\varrho}l^{qn(1-\frac{1}{p})}.\label{E23}
\end{eqnarray}
\end{lem}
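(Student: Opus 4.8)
The plan is to follow the scheme used for Lemma~\ref{La2}, but with the inner radius now taken to be a constant multiple of $2^{-j\varrho}$ rather than $l^{t/2N_{1}}2^{jt/2N_{1}-j\varrho}$. This reflects the fact that in the present range $l^{-1}\le2^{j}<l^{-1/\varrho}$ one has $l2^{j}\ge1$, so the vanishing moments of $a_{Q}$ no longer produce any gain, and the only information about $a_{Q}$ that is used is $\|a_{Q}\|_{L^{1}}\lesssim l^{n(1-\frac{1}{p})}$ together with $\supp a_{Q}\subset Q(0,l)$; at the same time $l<c2^{-j\varrho}$, so $\supp a_{Q}$ is contained in a ball of radius comparable to $2^{-j\varrho}$. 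First I would split
\[
\int_{\mathbb{R}^{n}}|T^{*}_{j}a_{Q}(x)|^{q}\,dx=\int_{|x|\le 2^{-j\varrho}}+\int_{|x|>2^{-j\varrho}}.
\]
On the inner ball, H\"older's inequality with exponents $2/q$ and $2/(2-q)$ reduces matters to $\|T^{*}_{j}a_{Q}\|_{L^{2}}$; since $\widehat{T^{*}_{j}a_{Q}}(\xi)=\int_{\mathbb{R}^{n}}e^{-i\langle y,\xi\rangle}a_{j}(y,\xi)a_{Q}(y)\,dy$ is supported in $\{|\xi|\sim2^{j}\}$ with sup norm $\lesssim2^{-jn(1-\varrho)(\frac{1}{p}-\frac{1}{2})}\|a_{Q}\|_{L^{1}}$, Plancherel gives $\|T^{*}_{j}a_{Q}\|_{L^{2}}\lesssim2^{jn/2-jn(1-\varrho)(\frac{1}{p}-\frac{1}{2})}l^{n(1-\frac{1}{p})}$, and a direct computation of exponents shows that $|B(0,2^{-j\varrho})|^{1-q/2}\|T^{*}_{j}a_{Q}\|_{L^{2}}^{q}$ is exactly the right-hand side of \eqref{E16}.

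The outer region is the crux, and here I would exploit the oscillation built into an $S^{m}_{\varrho,\delta}$ symbol. Decompose the shell $\{|\xi|\sim2^{j}\}$ into $\sim2^{jn(1-\varrho)}$ cubes $Q_{\nu}$ of side $\sim2^{j\varrho}$, write $a_{j}=\sum_{\nu}a_{j,\nu}$ and $T^{*}_{j}=\sum_{\nu}T^{*}_{j,\nu}$ accordingly. Since $a_{j,\nu}$ is smooth at scale $2^{j\varrho}$ in $\xi$ (the derivatives gaining $\langle\xi\rangle^{-\varrho|\alpha|}\sim2^{-j\varrho|\alpha|}$) and supported in $Q_{\nu}$, repeated integration by parts in $\xi$ yields $|K^{*}_{j,\nu}(x,y)|\lesssim2^{j\varrho n-jn(1-\varrho)(\frac{1}{p}-\frac{1}{2})}(1+2^{j\varrho}|x-y|)^{-L}$ for every $L$; consequently each $T^{*}_{j,\nu}a_{Q}$ is concentrated in $\{|x|\lesssim2^{-j\varrho}\}$ up to rapidly decaying tails, with $\|T^{*}_{j,\nu}a_{Q}\|_{L^{\infty}}\lesssim2^{j\varrho n-jn(1-\varrho)(\frac{1}{p}-\frac{1}{2})}\|a_{Q}\|_{L^{1}}$, and its Fourier transform is supported in a translate of $Q_{\nu}$. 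Because these frequency supports are pairwise disjoint, a Littlewood--Paley/Rubio de Francia square-function inequality gives (for $1\le q\le2$) $\|T^{*}_{j}a_{Q}\|_{L^{q}}\lesssim\big\|(\sum_{\nu}|T^{*}_{j,\nu}a_{Q}|^{2})^{1/2}\big\|_{L^{q}}\lesssim2^{jn(1-\varrho)/2}\sup_{\nu}\|T^{*}_{j,\nu}a_{Q}\|_{L^{\infty}}\,|B(0,2^{-j\varrho})|^{1/q}$, and one checks that the right-hand side reproduces \eqref{E16} exactly; the extra factor $2^{jn(1-\varrho)/2}$ coming from the number of blocks is precisely what the naive bound $|T^{*}_{j}a_{Q}(x)|\lesssim\|K^{*}_{j}(\cdot,y)\|_{L^{\infty}}\|a_{Q}\|_{L^{1}}$ misses. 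For \eqref{E23} I would use $|x|^{t}\lesssim|x-y|^{t}+l^{t}$ for $y\in\supp a_{Q}$: the contribution of $l^{t}$ is $l^{qt}$ times the left side of \eqref{E16}, which is bounded by $2^{-jqt\varrho}$ times the right side of \eqref{E16} because $l<2^{-j\varrho}$; the contribution of $|x-y|^{t}$ equals $\sum_{|\gamma|=t/2}c_{\gamma}T^{*}_{(i\partial_{\xi})^{2\gamma}a_{j}}a_{Q}$, whose symbols lie in the same class but of order $-n(1-\varrho)(\frac{1}{p}-\frac{1}{2})-\varrho t$, so the argument above applied termwise again produces the factor $2^{-jqt\varrho}$. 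Thus both \eqref{E16} and \eqref{E23} follow.

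The main obstacle is exactly the outer estimate: one must not simply bound $T^{*}_{j}a_{Q}$ by the height of the full kernel on the ball $\{|x|\lesssim2^{-j\varrho}\}$ (this overestimates by precisely $2^{jqn(1-\varrho)/2}$ in the $L^{q}$-norm), but instead split the frequency annulus at the natural scale $2^{j\varrho}$, use that each piece $T^{*}_{j,\nu}a_{Q}$ is genuinely localized in both space and frequency, and reassemble the (almost orthogonal) pieces in $L^{q}$ --- and the $L^{q}$-almost orthogonality for an arbitrary tiling of the shell is the one nonroutine input. A secondary point requiring attention is the range of $q$ in the square-function step (the case $0<q<1$, if it is to be included, needs an additional argument, although only $q=1$ is used when one feeds Proposition~\ref{P2} into the molecular machinery), and the bookkeeping for \eqref{E23} (the symbol classes of the $(i\partial_{\xi})^{2\gamma}a_{j}$ and the uniformity of the constants), which is routine given the corresponding computation in Lemma~\ref{La2}.
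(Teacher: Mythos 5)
Your treatment of the inner ball $|x|\le 2^{-j\varrho+1}$ (H\"older with exponents $\tfrac{2}{q}$ and $\tfrac{2}{2-q}$, then Plancherel/Minkowski using only $\|a_{Q}\|_{L^{1}}\lesssim l^{n(1-\frac{1}{p})}$) is exactly what the paper does, and your exponent count there is right. The problem is the outer region. Your recombination step relies on the one-sided Littlewood--Paley inequality $\|\sum_{\nu}f_{\nu}\|_{L^{q}}\lesssim\|(\sum_{\nu}|f_{\nu}|^{2})^{1/2}\|_{L^{q}}$ for functions with Fourier supports in disjoint lattice cubes. That inequality holds for $1<q<\infty$ but fails at $q=1$ and below, and the lemma is invoked in the paper precisely in that range: with $q=1$ in the proof of Proposition \ref{P2}, and with $q=p<1$ in the proof of Theorem \ref{TH1}. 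Deferring ``the case $0<q<1$'' as a secondary issue therefore leaves the statement unproved in exactly the regime for which it is needed; this is a genuine gap, not bookkeeping. (There is also a structural confusion: the square-function bound you write down estimates the full $L^{q}(\mathbb{R}^{n})$ norm, which would make the inner-ball computation redundant, and the rapidly decaying tails of the pieces $T^{*}_{j,\nu}a_{Q}$ outside $|x|\lesssim 2^{-j\varrho}$ are never actually summed.)

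The fix is that no frequency decomposition of the annulus is needed: the same square-root cancellation you are trying to capture with the $\sim2^{jn(1-\varrho)}$ blocks is already delivered by Plancherel in $x$, on the outer region as well as the inner one. On $|x|>2^{-j\varrho+1}$ one applies H\"older with the weight $|x|^{-N\cdot\frac{2q}{2-q}}$ (integrable there for $N$ large, contributing $2^{-j\varrho q(n(\frac{1}{q}-\frac{1}{2})-N)}$), writes $|x-y|^{2N}|K^{*}_{j}(x,y)|^{2}$ via $N$-fold integration by parts in $\xi$, and uses Parseval exactly as on the inner ball; the factor $2^{-j\varrho N}$ gained from $\partial^{\alpha}_{\xi}a_{j}$ cancels the $2^{j\varrho N}$ lost from the weight, reproducing the right-hand side of \eqref{E16}. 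This works for every $0<q<2$ since $\tfrac{2}{q}$ and $\tfrac{2}{2-q}$ remain conjugate exponents $\ge1$ throughout that range. The moment bound \eqref{E23} then follows by absorbing $|x|^{qt}$ into the two H\"older weights ($|x|^{t\frac{2q}{2-q}}$ on the inner ball, $|x|^{-(N-t)\frac{2q}{2-q}}$ outside), which produces the extra factor $2^{-jqt\varrho}$; your alternative via $|x|^{t}\lesssim|x-y|^{t}+l^{t}$ and $\partial_{\xi}^{2\gamma}a_{j}$ is sound in principle but inherits the same gap through its reliance on \eqref{E16}.
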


\begin{proof}
We proof (\ref{E16}) first. Break up the integral with respect to the variable $x$ as follows
\begin{eqnarray}\label{d0}
\int_{|x|\leq 2^{-j\varrho+1}}
+\int_{|x|> 2^{-j\varrho+1}}.
\end{eqnarray}
 H\"{o}lder's inequality and Parseval's identity show that the first integral in (\ref{d0})is bounded by
\begin{eqnarray}\label{E27}
&&\big(\int_{|x|\leq 2^{-j\varrho}}dx\big)^{1-\frac{q}{2}}
\big(\int_{\mathbb{R}^{n}}|\int_{Q(0,l)}\int_{\mathbb{R}^n} e^{ i \langle x-y,\xi\rangle} a_{j}(y,\xi)a_{Q}(y) d\xi dy|^{2}dx\big)^{\frac{q}{2}}\\
&\leq&2^{-jn\varrho(1-\frac{q}{2})}
\bigg(\int_{Q(0,l)}|a_{Q}(y)|\big(\int_{\mathbb{R}^{n}}|\int_{\mathbb{R}^n} e^{ i \langle x-y,\xi\rangle}a_{j}(y,\xi) d\xi|^{2} dx\big)^{\frac{1}{2}}dy\bigg)^{q}\nonumber\\
&\lesssim&2^{-jn\varrho(1-\frac{q}{2})}
\bigg(\int_{Q(0,l)}|a_{Q}(y)|\big(\int_{\mathbb{R}^n} | a_{j}(y,\xi)|^{2} d\xi\big)^{\frac{1}{2}}dy\bigg)^{q}\nonumber\\
&\lesssim&2^{-jn\varrho(1-\frac{q}{2})+jq(-n(1-\varrho)(\frac{1}{p}-\frac{1}{2})+\frac{n}{2})}l^{q(n-\frac{n}{p})}.\nonumber
\end{eqnarray}

By H\"{o}lder's inequality, integrating by parts, Parseval's identity and the fact $|x|\sim|x-y|$ that follows from $l<1$, $y\in Q(0,l)$ and $|x|>2^{-j\varrho+1}$. The second integral in (\ref{d0}) is bounded by
\begin{eqnarray}\label{E28}
&\big(&\int_{|x|>2^{-j\varrho+1}}\frac{1}{|x|^{N(\frac{2q}{2-q})}}dx\big)^{1-\frac{q}{2}}\nonumber\\
&\times&\big(\int_{|x|>2^{-j\varrho+1}}|x|^{2N}|\int_{Q(0,l)}\int_{\mathbb{R}^n} e^{ i \langle x-y,\xi\rangle} a_{j}(y,\xi)a_{Q}(y) d\xi dy|^{2}dx\big)^{\frac{q}{2}}\\
&\lesssim&2^{-j\varrho q\big(n(\frac{1}{q}-\frac{1}{2})-N\big)}
\bigg(\int_{Q(0,l)}|a_{Q}(y)|\big(\int_{\mathbb{R}^{n}}|x-y|^{2N}|\int_{\mathbb{R}^n} e^{ i \langle x-y,\xi\rangle}a_{j}(y,\xi) d\xi|^{2} dx\big)^{\frac{1}{2}}dy\bigg)^{q}\nonumber\\
&\lesssim&2^{-j\varrho q\big(n(\frac{1}{q}-\frac{1}{2})-N\big)}
\bigg(\int_{Q(0,l)}|a_{Q}(y)|\big(\int_{\mathbb{R}^n} |\partial^{\alpha}_{\xi}a_{j}(y,\xi)|^{2} d\xi\big)^{\frac{1}{2}}dy\bigg)^{q}\nonumber\\
&\lesssim&2^{-jq\varrho\big(n(\frac{1}{q}-\frac{1}{2})-N\big)+jq(-n(1-\varrho)(\frac{1}{p}-\frac{1}{2})+\frac{n}{2}-\varrho N)}l^{q(n-\frac{n}{p})}.\nonumber
\end{eqnarray}

The proof of (\ref{E23}) is a little different from (\ref{E16}), that is, the first term in (\ref{E27}) and (\ref{E28}) is
$$\big(\int_{|x|\leq 2^{-j\varrho}}|x|^{t\frac{2q}{2-q}}dx\big)^{1-\frac{q}{2}}\quad \mathrm{and} \quad
\big(\int_{|x|>2^{-j\varrho+1}}\frac{1}{|x|^{(N-t)(\frac{2q}{2-q})}}dx\big)^{1-\frac{q}{2}}.$$
\end{proof}
In the course of the above proof, if $\varrho=0$, $|x|\sim|x-y|$ is still true for $l<1$, $y\in Q(0,l)$ and $|x|>2$. Thus we have
\begin{lem}\label{R3}
Let $Q(0,l)$ be a fixed cube with side length $l<1$. Suppose $0<q<2$, $0<p<1$ and $0\leq\delta<1$. For any positive integer $j$ with $l^{-1}\leq2^{j}$ if $a\in S^{-n(\frac{1}{p}-\frac{1}{2})}_{0,\delta}$ then
\begin{eqnarray}
\int_{\mathbb{R}^{n}}|T^{*}_{j}a_{Q}(x)|^{q}dx
&\lesssim&2^{jqn(1-\frac{1}{p})}l^{qn(1-\frac{1}{p})};\label{E1600}\\
\int_{\mathbb{R}^{n}}|x|^{qt}|T^{*}_{j}a_{Q}(x)|^{q}dx
&\lesssim&2^{jqn(1-\frac{1}{p})}l^{qn(1-\frac{1}{p})}.\label{E2300}
\end{eqnarray}
\end{lem}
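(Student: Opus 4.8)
The plan is to run the proof of Lemma \ref{La1} essentially verbatim with $\varrho=0$, checking only the two spots where positivity of $\varrho$ was genuinely used: the dyadic split of the $x$-integral, and the decay gained from $\xi$-differentiation. Following that lemma, I would write
$$\int_{\mathbb{R}^{n}}|T^{*}_{j}a_{Q}(x)|^{q}dx=\int_{|x|\le2}+\int_{|x|>2},$$
which is the $\varrho=0$ analogue of the split $\int_{|x|\le2^{-j\varrho+1}}+\int_{|x|>2^{-j\varrho+1}}$ used there.

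For the inner integral, Hölder's inequality in $x$ (pairing $L^{2/q}$ and $L^{2/(2-q)}$ on the \emph{bounded} set $\{|x|\le2\}$) followed by Minkowski's inequality in $y$ and Parseval's identity reduces everything to the elementary bounds $\int_{Q(0,l)}|a_{Q}(y)|\,dy\le|Q|^{1/2}\|a_{Q}\|_{L^{2}}\lesssim l^{n(1-1/p)}$ and $\|a_{j}(y,\cdot)\|_{L^{2}}\lesssim 2^{jn(1-1/p)}$, the latter because $a_{j}(y,\cdot)$ is supported where $|\xi|\sim2^{j}$ and $|a_{j}(y,\xi)|\lesssim\langle\xi\rangle^{-n(1/p-1/2)}$. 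Since $|\{|x|\le2\}|$ is a harmless constant, this already yields $2^{jqn(1-1/p)}l^{qn(1-1/p)}$.

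For the outer integral, I would integrate by parts $2N$ times in $\xi$ to produce a factor $|x-y|^{-2N}$; the observation recorded just before the statement is that for $l<1$, $y\in Q(0,l)$ and $|x|>2$ one still has $|x-y|\sim|x|$ (the half-diagonal $\sqrt{n}\,l/2$ of $Q(0,l)$ being absorbed into the implied constants, enlarging the threshold $2$ if necessary). Because $\varrho=0$, the derivatives $\partial^{\alpha}_{\xi}a_{j}$ obey the same bound $\lesssim\langle\xi\rangle^{-n(1/p-1/2)}$ as $a_{j}$, so Parseval again gives $\|\partial^{\alpha}_{\xi}a_{j}(y,\cdot)\|_{L^{2}}\lesssim 2^{jn(1-1/p)}$; choosing $N$ large enough that $\int_{|x|>2}|x|^{-2N\cdot\frac{2q}{2-q}}\,dx<\infty$, the same Hölder--Minkowski--Parseval chain delivers the bound \eqref{E1600}. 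Estimate \eqref{E2300} then comes out of the identical computation with the weight $|x|^{qt}$ inserted: on $\{|x|\le2\}$ it is $O(1)$ and costs nothing, while on $\{|x|>2\}$ one pulls out $|x|^{qt}\sim|x-y|^{qt}$ and integrates by parts $2N$ times with $2N>t$, so that $\int_{|x|>2}|x|^{-(2N-t)\cdot\frac{2q}{2-q}}\,dx<\infty$; no power of $2^{j}$ or $l$ is gained or lost, so the right-hand side is again $2^{jqn(1-1/p)}l^{qn(1-1/p)}$.

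The only genuine point to verify — and the one flagged in the sentence preceding the lemma — is that the geometric comparison $|x-y|\sim|x|$ persists on the region $|x|>2$ when $\varrho=0$; everything else is the bookkeeping already carried out in Lemma \ref{La1}, so I do not expect any serious obstacle.
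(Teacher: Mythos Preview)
Your proposal is correct and follows exactly the paper's own argument: the paper derives this lemma by observing that when $\varrho=0$ the split point $2^{-j\varrho+1}$ in Lemma~\ref{La1} becomes $2$, and the comparison $|x-y|\sim|x|$ for $l<1$, $y\in Q(0,l)$, $|x|>2$ still holds, so the entire computation of Lemma~\ref{La1} goes through verbatim with no $2^{j\varrho}$ factors appearing. One trivial remark: for \eqref{E2300} you need $(2N-t)\tfrac{2q}{2-q}>n$, not merely $2N>t$, but since $N$ is at your disposal this is harmless.
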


\begin{lem}\label{L10}
Suppose $0<q<2$, $0<p\leq1$, $0\leq\varrho<1$ and $0\leq\delta<1$. For any positive integer $2N_{2}>\frac{n(2-p)}{2}$, if $a\in S^{-n(1-\varrho)(\frac{1}{p}-\frac{1}{2})}_{\varrho,\delta}$ then

\begin{eqnarray}
\int_{\mathbb{R}^{n}}|T^{*}_{j}a_{Q}(x)|^{q}dx
&\lesssim&
l^{qn(\frac{1}{q}-\frac{1}{p})}2^{-jq\big(n(1-\varrho)(\frac{1}{p}-\frac{1}{2})-\frac{n}{2}\max(0,\delta-\varrho)\big)}\nonumber\\
&+&l^{q\big(n(\frac{1}{q}-\frac{1}{p}+\frac{1}{2})-N_{2}\big)}2^{jq(-n(1-\varrho)(\frac{1}{p}-\frac{1}{2})+\frac{n}{2}-\varrho N_{2})}\label{E29}\\
\int_{\mathbb{R}^{n}}|x|^{qt}|T^{*}_{j}a_{Q}(x)|^{q}dx
&\lesssim&l^{qn(\frac{1}{q}-\frac{1}{p})+qt}2^{-jq\big(n(1-\varrho)(\frac{1}{p}-\frac{1}{2})-\frac{n}{2}\max(0,\delta-\varrho)\big)}\nonumber\\
&+&l^{q\big(n(\frac{1}{q}-\frac{1}{p}+\frac{1}{2})-N_{2}\big)}2^{jq\big(-n(1-\varrho)(\frac{1}{p}-\frac{1}{2})+\frac{n}{2}-\varrho (N_{2}+t)\big)}.\label{E31}
\end{eqnarray}
\end{lem}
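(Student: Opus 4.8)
The plan is to bound
\[
T^{*}_{j}a_{Q}(x)=\int_{Q(0,l)}K^{*}_{j}(x,y)\,a_{Q}(y)\,dy,\qquad
K^{*}_{j}(x,y)=\frac{1}{(2\pi)^{n}}\int_{\mathbb{R}^{n}}e^{i\langle x-y,\xi\rangle}a_{j}(y,\xi)\,d\xi ,
\]
by splitting the $x$--integral at the scale $l$ of the atom, and to exploit the fact that for each fixed $y$ the map $x\mapsto K^{*}_{j}(x,y)$ is, up to the translation by $y$, the inverse Fourier transform of $a_{j}(y,\cdot)$, so that Plancherel's identity can be applied directly in $x$. Here $a_{j}(y,\xi)=a(y,\xi)\psi(2^{-j}\xi)$ is supported in $|\xi|\sim 2^{j}$ and obeys $|\partial^{\alpha}_{\xi}\partial^{\beta}_{y}a_{j}(y,\xi)|\lesssim\langle\xi\rangle^{-n(1-\varrho)(1/p-1/2)-\varrho|\alpha|+\delta|\beta|}$ uniformly in $j$, and the atom satisfies $\|a_{Q}\|_{L^{2}}\le|Q|^{1/2-1/p}=l^{n(1/2-1/p)}$, hence $\|a_{Q}\|_{L^{1}}\le|Q|^{1/2}\|a_{Q}\|_{L^{2}}\le l^{n(1-1/p)}$.

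For the part of the integral over $\{|x|\le 2l\}$ I would first record the $L^{2}$ estimate
\[
\|T^{*}_{j}a_{Q}\|_{L^{2}}\lesssim 2^{-j\left(n(1-\varrho)(1/p-1/2)-\frac{n}{2}\max\{\delta-\varrho,0\}\right)}\|a_{Q}\|_{L^{2}} .
\]
This is obtained by factoring the $j$--dependent constant $A_{j}=2^{-j(n(1-\varrho)(1/p-1/2)-\frac{n}{2}\max\{\delta-\varrho,0\})}$ out of $a_{j}$: since $\langle\xi\rangle\sim 2^{j}$ on the support of $a_{j}$, the symbol $a_{j}/A_{j}$ belongs to $S^{-\frac{n}{2}\max\{\delta-\varrho,0\}}_{\varrho,\delta}$ with seminorms independent of $j$, so $T^{*}_{a_{j}/A_{j}}$ is bounded on $L^{2}$ by Lemma \ref{LL} (applied with $p=q=2$). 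Then H\"older's inequality (recall $q<2$) gives $\int_{|x|\le 2l}|T^{*}_{j}a_{Q}|^{q}\lesssim l^{n(1-q/2)}\|T^{*}_{j}a_{Q}\|_{L^{2}}^{q}$, and after inserting the two displayed bounds and simplifying the powers of $l$ and $2^{j}$ one lands exactly on the first term of \eqref{E29}. For \eqref{E31} one multiplies through by $|x|^{qt}\le(2l)^{qt}$ on this set, which just appends a factor $l^{qt}$ and yields the first term of \eqref{E31}.

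For the part over $\{|x|>2l\}$ one has $|x-y|\sim|x|$ for $y\in Q(0,l)$, so repeated integration by parts in $\xi$ gives $(x-y)^{\alpha}K^{*}_{j}(x,y)=c_{\alpha}\int e^{i\langle x-y,\xi\rangle}\partial^{\alpha}_{\xi}a_{j}(y,\xi)\,d\xi$; summing over $|\alpha|=N_{2}$ and using $|x-y|^{N_{2}}\lesssim\sum_{|\alpha|=N_{2}}|(x-y)^{\alpha}|$ leads to
\[
|T^{*}_{j}a_{Q}(x)|\lesssim|x|^{-N_{2}}\sum_{|\alpha|=N_{2}}\Big|\int_{Q(0,l)}\Big(\int e^{i\langle x-y,\xi\rangle}\partial^{\alpha}_{\xi}a_{j}(y,\xi)\,d\xi\Big)a_{Q}(y)\,dy\Big| .
\]
Applying H\"older in $x$ with exponents $\tfrac{2q}{2-q}$ and $2$ splits off $\big\||x|^{-N_{2}}\big\|_{L^{2q/(2-q)}(|x|>2l)}\lesssim l^{\,n(1/q-1/2)-N_{2}}$, which is finite because $N_{2}$ is taken large enough (this is the role of the hypothesis on $N_{2}$); the remaining $L^{2}_{x}$ factor is controlled by Minkowski's inequality, Plancherel in $x$, the pointwise bound $\|\partial^{\alpha}_{\xi}a_{j}(y,\cdot)\|_{L^{2}_{\xi}}\lesssim 2^{j(-n(1-\varrho)(1/p-1/2)-\varrho N_{2}+n/2)}$, and $\|a_{Q}\|_{L^{1}}\le l^{n(1-1/p)}$. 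Multiplying and simplifying produces exactly the second term of \eqref{E29}. For \eqref{E31} one integrates by parts $N_{2}+t$ times instead, so that $|x|^{qt}|T^{*}_{j}a_{Q}(x)|^{q}\lesssim|x|^{-qN_{2}}(\cdots)$, and running the same argument gives the second term of \eqref{E31}.

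The main obstacle is the near-field $L^{2}$ step. The sharp exponent there, and in particular the $\max\{\delta-\varrho,0\}$ correction, does not come from a naive Plancherel bound (which would only give the weaker factor $2^{jn/2}$); it genuinely requires the sharp $L^{2}$ theory of H\"ormander-class pseudo-differential operators, here in the form of Lemma \ref{LL}. Once that ingredient is available, the remainder is the routine---though somewhat tedious---bookkeeping needed to verify that the two pieces add up to precisely \eqref{E29} and \eqref{E31}.
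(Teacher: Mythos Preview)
Your proposal is correct and follows essentially the same route as the paper: split $\int_{\mathbb{R}^{n}}$ at $|x|=2l$, handle the near region by H\"older and the sharp $L^{2}$ bound for $T^{*}_{j}$ (factoring out $2^{-j(n(1-\varrho)(1/p-1/2)-\frac{n}{2}\max\{\delta-\varrho,0\})}$ and invoking Lemma~\ref{LL}), and handle the far region by $N_{2}$-fold integration by parts in $\xi$, then H\"older with exponent $\tfrac{2q}{2-q}$ on $|x|^{-N_{2}}$ and Minkowski--Plancherel on the rest; for \eqref{E31} the paper likewise inserts the extra factor $|x|^{qt}$ (via $|x|\le 2l$ in the near region) and replaces $N_{2}$ by $N_{2}+t$ in the far region. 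Your identification of the sharp $L^{2}$ theory as the key input is exactly the point.
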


\begin{proof}
Break up the integral with respect to the variable $x$ as follows
\begin{eqnarray}\label{d2}
\int_{|x|\leq 2l}
+\int_{|x|> 2l}.
\end{eqnarray}
Notice that $a_{j}(y,\xi) \in S^{-\frac{n}{2}\max(0,\delta-\varrho)}_{\varrho,\delta}$ with bounds $\lesssim 2^{-jn(1-\varrho)(\frac{1}{p}-\frac{1}{2})+j\frac{n}{2}\max(0,\delta-\varrho)}$. H\"{o}lder's inequality and the $L^{2}$-estimate of $T_{j}$ give that
 the first integral in (\ref{d2})is bounded by
\begin{eqnarray}
&&\big(\int_{|x|\leq 2l}dx\big)^{1-\frac{q}{2}}\|T^{*}_{j}a_{Q}\|^{q}_{L^{2}}\label{E32}\\
&\leq&l^{n(1-\frac{q}{2})}
2^{-jq\big(n(1-\varrho)(\frac{1}{q}-\frac{1}{2})-\frac{n}{2}\max(0,\delta-\varrho)\big)}\|a_{Q}\|^{q}_{L^{2}}\nonumber\\
&\lesssim&l^{qn(\frac{1}{q}-\frac{1}{p})}2^{-jq\big(n(1-\varrho)(\frac{1}{p}-\frac{1}{2})-\frac{n}{2}\max(0,\delta-\varrho)\big)}.\nonumber
\end{eqnarray}
By H\"{o}lder's inequality, integrating by parts, Parseval's identity and the fact  $|x|\sim|x-y|$ that follows from $l<1$, $y\in Q(0,l)$ and $|x|>2l$. The second integral in (\ref{d2}) is bounded by
\begin{eqnarray}
&\big(&\int_{|x|>2l}\frac{1}{|x|^{N_{2}(\frac{2q}{2-q})}}dx\big)^{1-\frac{q}{2}}\nonumber\\
&\times&\big(\int_{|x|>2l}|x|^{2N_{2}}|\int_{Q(0,l)}\int_{\mathbb{R}^n} e^{ i \langle x-y,\xi\rangle} a_{j}(y,\xi)a_{Q}(y) d\xi dy|^{2}dx\big)^{\frac{q}{2}}\label{E33}\\
&\lesssim&l^{q\big(n(\frac{1}{q}-\frac{1}{p}+\frac{1}{2})-N_{2}\big)}2^{jq(-n(1-\varrho)(\frac{1}{p}-\frac{1}{2})+\frac{n}{2}-\varrho N_{2})}.\nonumber
\end{eqnarray}

The proof of (\ref{E31}) is a little different from (\ref{E29}), that is,  (\ref{E32}) and (\ref{E33}) in this case is
$$\big(\int_{|x|\leq 2l}|x|^{t\frac{2p}{2-p}}dx\big)^{1-\frac{p}{2}}\|T^{*}_{j}a_{Q}\|^{p}_{L^{2}}$$
and
\begin{eqnarray*}
&\big(&\int_{|x|>2l}\frac{1}{|x|^{N_{2}(\frac{2p}{2-p})}}dx\big)^{1-\frac{p}{2}}\nonumber\\
&\times&\big(\int_{|x|>2l}|x|^{2(N_{2}+t)}|\int_{Q(0,l)}\int_{\mathbb{R}^n} e^{ i \langle x-y,\xi\rangle} a_{j}(y,\xi)a_{Q}(y) d\xi dy|^{2}dx\big)^{\frac{p}{2}},
\end{eqnarray*}
respectively.
\end{proof}

\begin{rem}\label{R4}
Lemma \ref{La1} and Lemma \ref{L10} are still valid when $\varrho=1$, but both of then can not be used. In fact, there is no $T^{*}_{j}$ in Lemma \ref{La1} and no convergence factor in Lemma \ref{L10}. However, the $H^{p}$-continuity in this paper can be proved without them.
\end{rem}
\begin{rem}\label{R5}
Lemma \ref{La2}, Lemma \ref{La1} and Lemma \ref{L10} hold for $T_{j}$. They can be proved parallelly, provided in the argument above we apply the $L^{2}$-estimate for pseudo-differential operators instead of Parseval's identity at cost of $a\in S^{-n(1-\varrho)(\frac{1}{p}-\frac{1}{2})-\frac{n}{2}\max(0,\delta-\varrho)}_{\varrho,\delta}$.
\end{rem}

\begin{proof}[Proof of Proposition \ref{P2}]
(1) is considered first. By standard molecular technique, it is suffices to show that if $a_{Q}$ be a $(p,2,2t)$ atom with $t$ an even integer $t>\frac{n}{p}$, then $T^{*}a_{Q}$ is a $(p,1,s,\epsilon)$ molecule, where $s=[n(\frac{1}{p}-1)]$. Without loss of generality, we assume $Q=Q(0,l)$. Take $\epsilon=\frac{t}{n}-\frac{1}{2}$(clearly,$\epsilon>\max\{\frac{s}{n},\frac{1}{p}-1\}$), then $a_{0}=1-\frac{1}{p}+\frac{t}{n}$ and $b_{0}=\frac{t}{n}$. The vanishing of $T^{*}a_{Q}$ is clear. So it has to be shown that
$$\|T^{*}a_{Q}\|^{1-\frac{1}{p}+\frac{t}{n}}_{L^{1}}\||\cdot|^{t}T^{*}a_{Q}(\cdot)\|^{\frac{1}{p}-1}_{L^{1}}<\infty$$
To this end, it suffices to show the following inequalities
$$\left\{
  \begin{array}{ll}
    \|T^{*}a_{Q}\|_{L^{1}}\lesssim l^{\varrho (n-\frac{n}{p})}\quad
\mathrm{and}\quad \||\cdot|^{t}T^{*}a_{Q}(\cdot)\|_{L^{1}}\lesssim l^{\varrho(t+n-\frac{n}{p})}, & \mathrm{if}\hbox{ $0<l<1$;}    \\
    \|T^{*}a_{Q}\|_{L^{1}}\lesssim l^{(n-\frac{n}{p})}
\quad\mathrm{and}\quad\||\cdot|^{t}T^{*}a_{Q}(\cdot)\|_{L^{1}}\lesssim l^{(t+n-\frac{n}{p})}, & \mathrm{if}\hbox{ $l\geq1$;}
  \end{array}
\right.
$$
We compose the operator $T_{a}$ as (\ref{de}) when $0\leq\varrho<1$, then $\|T^{*}a_{Q}\|_{L^{1}}$ and $\||\cdot|^{t}T^{*}a_{Q}(\cdot)\|_{L^{1}}$ are bounded by
$$\sum\limits_{j}\int_{\mathbb{R}^{n}}|T^{*}_{j}a_{Q}(x)|dx\quad \mathrm{and} \quad \sum\limits_{j}\int_{\mathbb{R}^{n}}|x|^{t}|T^{*}_{j}a_{Q}(x)|dx$$

Case 1. $0<l<1$;
Break up this sum as before, that is,
\begin{eqnarray}\label{Break}
\left\{
  \begin{array}{ll}
    \sum\limits_{2^{j}<l^{-1}}
+\sum\limits_{l^{-1}<2^{j}}, & \hbox{if $\varrho=0$;}    \\
\sum\limits_{2^{j}<l^{-1}}+\sum\limits_{l^{-1}\leq2^{j}\leq l^{-\frac{1}{\varrho}}}
+\sum\limits_{l^{-\frac{1}{\varrho}}<2^{j}}, & \hbox{if $0<\varrho<1$.}
  \end{array}
\right.
\end{eqnarray}

If $0<\varrho<1$. By Lemma \ref{La2}, Lemma \ref{La1} and Lemma \ref{L10} after taking $q=1$, the corresponding sum can be bounded by
\begin{eqnarray*}
&&\sum\limits_{2^{j}<l^{-1}}2^{jn\varrho(\frac{1}{p}-1)+jn(1-\frac{1}{p})+jn\frac{t}{2N}(1-\frac{p}{2})}l^{n(1-\frac{1}{p})+n\frac{t}{2N}(1-\frac{p}{2})}
+\sum\limits_{l^{-1}\leq2^{j}\leq l^{-\frac{1}{\varrho}}}2^{-jn(1-\varrho)(\frac{1}{p}-1)}l^{n(1-\frac{1}{p})}\\
&+&\sum\limits_{l^{-\frac{1}{\varrho}}<2^{j}}\bigg(l^{n(1-\frac{1}{p})}2^{-j\big(n(1-\varrho)(\frac{1}{p}-\frac{1}{2})-\frac{n}{2}\max(0,\delta-\varrho)\big)}
+l^{n(1-\frac{1}{p})+(\frac{n}{2}-N_{2})}2^{-j\big(n(1-\varrho)(\frac{1}{p}-\frac{1}{2})-\frac{n}{2}+\varrho N_{2}\big)}\bigg)
\end{eqnarray*}
Clearly, the second sum above is convergent to $l^{\varrho (n-\frac{n}{p})}$ since $n(1-\varrho)(\frac{1}{p}-1)>0$. Note that $t$ is large enough, and so we can choose suitable positive integer $2N>\frac{n}{2}$ so that $n\varrho(\frac{1}{p}-1)+n(1-\frac{1}{p})+n\frac{t}{2N}(1-\frac{p}{2})>0$ since $1-\frac{p}{2}>0$. So The first sum is convergent to $l^{\varrho (n-\frac{n}{p})}$ as well. Notice that $n(1-\varrho)(\frac{1}{p}-\frac{1}{2})-\frac{n}{2}\max(0,\delta-\varrho)>0$, the first term in last sum is convergent to
\begin{eqnarray*}
l^{n(1-\frac{1}{p})+\frac{1}{\varrho}(n(1-\varrho)(\frac{1}{p}-\frac{1}{2})-\frac{n}{2}\max(0,\delta-\varrho)}
=l^{\varrho (n-\frac{n}{p})+\frac{n}{\varrho}\big((\frac{1}{p}-\frac{1}{2})(1-\varrho)^{2}+\frac{1}{2}(-\varrho^{2}+\varrho)-\max(0,\delta-\varrho)\big)}
\end{eqnarray*}
Notice that $l<1$ and
\begin{eqnarray*}
&&\frac{n}{\varrho}\big((\frac{1}{p}-\frac{1}{2})(1-\varrho)^{2}+\frac{1}{2}(-\varrho^{2}+\varrho)-\frac{1}{2}\max(0,\delta-\varrho)\big)\\
&\geq&\frac{n}{\varrho}\big((\frac{1}{p}-\frac{1}{2})(1-\varrho)^{2}+\frac{1}{2}(-\varrho^{2}+2\varrho-1)\big)\\
&=&\frac{n}{\varrho}(\frac{1}{p}-1)(1-\varrho)^{2}>0.
\end{eqnarray*}
We can get the first term in last sum is less then $l^{\varrho (n-\frac{n}{p})}$.
Taking $N_{2}$ large enough, we get the second term in last sum is convergent to
$$l^{n(1-\frac{1}{p})+\frac{n}{2}+\frac{1}{\varrho}(n(1-\varrho)(\frac{1}{p}-\frac{1}{2})-\frac{n}{2})}
=l^{\varrho (n-\frac{n}{p})+\frac{n}{\varrho}\big((\frac{1}{p}-1)(1-\varrho)^{2}\big)}
\leq l^{\varrho (n-\frac{n}{p})}.$$

If $\varrho=0$. Lemma \ref{La2}, Lemma \ref{La1} after taking $q=1$, and Lemma \ref{R3} give that the corresponding sum can be bounded by
\begin{eqnarray*}
&&\sum\limits_{2^{j}<l^{-1}}2^{jn(1-\frac{1}{p})+jn\frac{t}{2N}(1-\frac{p}{2})}l^{n(1-\frac{1}{p})+n\frac{t}{2N}(1-\frac{p}{2})}
+\sum\limits_{l^{-1}\leq2^{j}}2^{-jn(\frac{1}{p}-1)}l^{n(1-\frac{1}{p})}\lesssim1
\end{eqnarray*}

If $\varrho=1$, we can divide $a(x,\xi)$, with respect to variate $\xi$, smoothly into two parts, that is, $a(x,\xi)=\tilde{a}_{1}(x,\xi)+\tilde{a}_{2}(x,\xi)$ with $\supp_{\xi}\tilde{a}_{1}(x,\xi)\subset\{|\xi|\leq l^{-1}\}$ and $\supp_{\xi}\tilde{a}_{2}(x,\xi)\subset\{|\xi|\geq l^{-1}\}$. For $T^{*}_{\tilde{a}_{1}}$, we compose it into $\sum_{j}T^{*}_{\tilde{a}_{1},j}$ as (\ref{de}), then $T^{*}_{\tilde{a}_{1},j}a_{Q}=0$ when $2^{j}\geq l^{-1}$ and $T^{*}_{\tilde{a}_{1},j}a_{Q}$ meat the condition of Lemma \ref{La2} when $2^{j}\leq l^{-1}$. So
\begin{eqnarray*}
\int_{\mathbb{R}^{n}}|T^{*}_{\tilde{a}_{1},j}a_{Q}(x)|dx
&\leq&\sum\limits_{2^{j}\leq l^{-1}}\int_{\mathbb{R}^{n}}|T^{*}_{\tilde{a}_{1},j}a_{Q}(x)|dx\\
&\lesssim&\sum\limits_{2^{j}<l^{-1}}2^{jn(\frac{1}{p}-1)+jn(1-\frac{1}{p})+jn\frac{t}{2N}(1-\frac{p}{2})}l^{n(1-\frac{1}{p})+n\frac{t}{2N}(1-\frac{p}{2})}
\lesssim l^{(n-\frac{n}{p})}.
\end{eqnarray*}
Notice that $\tilde{a}_{2}(y,\xi) \in S^{0}_{1,\delta}$. By the same argument as Lemma \ref{L10}, it is easy to get
\begin{eqnarray*}
\int_{\mathbb{R}^{n}}|T^{*}_{\tilde{a}_{2}}a_{Q}(x)|dx\lesssim l^{n(1-\frac{1}{p})}.
\end{eqnarray*}

Next, we show the inequality $\||\cdot|^{t}T^{*}a_{Q}(\cdot)\|_{L^{1}}\lesssim l^{\varrho(t+n-\frac{n}{p})}$. Notice that $t$ is fixed large enough and $l^{t}\leq l^{\varrho t}$ for $0\leq\varrho\leq1$, this inequality can be gotten by a similar argument as above. Here, the estimates (\ref{e140}),(\ref{E23}) and (\ref{E31}) will be applied instead of (\ref{E14}), (\ref{E16}) and (\ref{E29}).

Case 2. $l\geq1$;
(\ref{E29}) in Lemma \ref{L10} gives (after taking $q=1$ and $N_{2}$ large enough)
\begin{eqnarray*}
\sum\limits_{j}\int_{\mathbb{R}^{n}}|T^{*}_{j}a_{Q}(x)|dx
&\lesssim&\sum\limits_{j}\bigg(l^{n(1-\frac{1}{p})}2^{-j\big(n(1-\varrho)(\frac{1}{p}-\frac{1}{2})-\frac{n}{2}\max(0,\delta-\varrho)\big)}\nonumber\\
&+&l^{n(1-\frac{1}{p})+(\frac{n}{2}-N_{2})}2^{-j\big(n(1-\varrho)(\frac{1}{p}-\frac{1}{2})-\frac{n}{2}+\varrho N_{2}\big)}\bigg)
\lesssim l^{n(1-\frac{1}{p})}.
\end{eqnarray*}
(\ref{E31}) in Lemma \ref{L10} gives (after taking $q=1$ and $N_{2}$ large enough)
\begin{eqnarray*}
\sum\limits_{j}\int_{\mathbb{R}^{n}}|x|^{t}|T^{*}_{j}a_{Q}(x)|dx
&\lesssim&\sum\limits_{j}\bigg(l^{n(1-\frac{1}{p})+t}2^{-j\big(n(1-\varrho)(\frac{1}{p}-\frac{1}{2})-\frac{n}{2}\max(0,\delta-\varrho)\big)}\nonumber\\
&+&l^{n(1-\frac{1}{p})+(\frac{n}{2}-N_{2})}2^{-j\big(n(1-\varrho)(\frac{1}{p}-\frac{1}{2})-\frac{n}{2}+\varrho (N_{2}+t)\big)}\bigg)\nonumber\\
&\lesssim&l^{n(1-\frac{1}{p})+t}+l^{n(1-\frac{1}{p})}
\leq l^{n(1-\frac{1}{p})+t}.
\end{eqnarray*}

By Remark \ref{R5}, the proofs of (2) is completely parallel.
\end{proof}

\begin{proof}[Proof of Theorem \ref{TH1}]
The proofs of (1) will be shown only and the proofs of (2) is completely parallel. Here, we always assume $0\leq\varrho<1$ as the case $\varrho=1$ is considered in Theorem \ref{B3}.

The $0<p<1$ is considered first. Let nonnegtive integer $t\geq[n(\frac{1}{p}-1)]$ ($[x]$ indicates the integer part of $[x]$).
A function $a_{Q}\in L(\mathbb{R}^{n})$ is called $(p,2,t)$ atom if it satisfies the following conditions:
\begin{equation*}
\begin{array}{c}
  \displaystyle (1)~\supp a_{Q}\subset Q; \quad (2)~\int_{\mathbb{R}^{n}}|a_{Q}(y)|\leq |Q|^{1-\frac{1}{p}};\quad (3)~\int_{\mathbb{R}^{n}}a_{Q}(y)y^{\alpha}dy=0, 0\leq|\alpha|\leq t,
\end{array}
\end{equation*}
where $Q=Q(\bar{y},l)$ is the cube about $\bar{y}$ with sidelength $l>0.$
According to the characterization of the Hardy spaces $H^{p}(\mathbb{R}^{n})$ via the atomic decomposition, it suffices to show that
\begin{equation}
\int_{\mathbb{R}^{n}}|T^{*}a_{Q}(x)|^{p}dx\leq  C,
\end{equation}
for an individual $(p,2,t)$ atom $a_{Q}$,  where constant $C$ independent of $a_{Q}$. We assume without loss of generality the center of the cube $Q$ is at the origin and decompose the operator $T^{*}_{a}$ as (\ref{de}). Then we have
\begin{eqnarray}\label{Sum}
\int_{\mathbb{R}^{n}}|T^{*}a_{Q}(x)|^{p}dx
&\leq&\sum\limits^{\infty}_{j=0}\int_{\mathbb{R}^{n}}|T^{*}_{j}a_{Q}(x)|^{p}dx.
\end{eqnarray}

For the case $l\geq1$, Lemma \ref{L10} (after taking $q=p$) implies that it can be bounded by
\begin{eqnarray*}
\sum\limits^{\infty}_{j=0}\bigg(2^{-jp\big(n(1-\varrho)(\frac{1}{p}-\frac{1}{2})-\frac{n}{2}\max(0,\delta-\varrho)\big)}
+l^{p(\frac{n}{2}-N_{2})}2^{-jp(n(1-\varrho)(\frac{1}{p}-\frac{1}{2})-\frac{n}{2}+\varrho N_{2})}\bigg).
\end{eqnarray*}
Clearly, $n(1-\varrho)(\frac{1}{p}-\frac{1}{2})-\frac{n}{2}\max(0,\delta-\varrho)>0$ and $n(1-\varrho)(\frac{1}{p}-\frac{1}{2})-\frac{n}{2}+\varrho N_{2}>0$(the case $\varrho=0$ is trivial and the case $\varrho\neq0$ can be get by letting $N_{2}$ large enough). So the sum above is convergence.

Next, we put our eyes on the case $l<1$. Break up the sum in (\ref{Sum}) as (\ref{Break}) again.

If $0<\varrho<1$. By Lemma \ref{La2}, Lemma \ref{La1} and Lemma \ref{L10} (after taking $q=p$), we see that it can be bounded by
\begin{eqnarray*}
&&\sum\limits_{2^{j}<l^{-1}}2^{jn(p-1)+jn\frac{t}{2N_{1}}(1-\frac{p}{2})}l^{n(p-1)+n\frac{t}{2N_{1}}(1-\frac{p}{2})}
+\sum\limits_{l^{-1}\leq2^{j}\leq l^{-\frac{1}{\varrho}}}2^{jn(p-1)}l^{n(p-1)}\\
&+&\sum\limits_{l^{-\frac{1}{\varrho}}<2^{j}}\bigg(2^{-jp\big(n(1-\varrho)(\frac{1}{p}-\frac{1}{2})-\frac{n}{2}\max(0,\delta-\varrho)\big)}
+l^{p(\frac{n}{2}-N_{2})}2^{-jp(n(1-\varrho)(\frac{1}{p}-\frac{1}{2})-\frac{n}{2}+\varrho N_{2})}\bigg)
\end{eqnarray*}
It is easy to see that the second term above is convergent since $0<p<1$. Let $t$ large enough, and then we can choose suitable positive integer $2N_{1}>\frac{n(2-p)}{2}$ so that $n(p-1)+n\frac{t}{2N_{1}}(1-\frac{p}{2})>0$ since $1-\frac{p}{2}>0$. So The first term is convergent too. Taking $N_{2}$ large enough, we get the last term is convergent to
$$1+l^{n(1-p)(\frac{1}{\varrho}-1)}\lesssim1.$$

If $\varrho=0$. By Lemma \ref{La2}, Remark \ref{R3} and the same argument as above, we get the desired estimate easily.

Now we consider that $p=1$. The case $0\leq\delta\leq\varrho<1$ has been done by P\"{a}iv\"{a}rinta and Somersalo \cite{Somersalo}. The remaining case $0\leq \varrho<\delta<1$ will be considered only. To this end, break the sum in (\ref{Sum}) as (\ref{Break}) again. The sum for $2^{j}<l^{-1}$ and $2^{j}>l^{-\frac{1}{1-\delta}}$ when $\varrho=0$, and for $2^{j}<l^{-1}$ and $2^{j}>l^{-\frac{1}{\varrho}}$ when $0<\varrho<1$ are convergence by Lemma \ref{La2} and Lemma \ref{L10} (after taking $q=p=1$). By Lemma \ref{La1}, one can not deal with the sum for $l^{-1}\leq2^{j}\leq l^{-\frac{1}{1-\delta}}$ when $\varrho=0$, and for $l^{-1}\leq2^{j}\leq l^{-\frac{1}{\varrho}}$ when $0<\varrho<1$ as above. Because, there is no convergence factor in this lemma when $q=p=1$. One can overcome this problem as the corresponding case in the proof of Theorem \ref{TH2} by following lemmas. So the proof is finished.
\end{proof}

\begin{lem}\label{L6}
Let $Q(x_{0},l)$ be a fixed cube with side length $l<1$. Suppose $0<\varrho<\delta<1$, $a\in S^{-\frac{n}{2}(1-\varrho)}_{\varrho,\delta}$. Then for any $1\leq\lambda\leq\frac{1}{\varrho}$, any positive integer $N>\frac{n}{2}$ and any positive integer $j$ with $l^{-\lambda}\leq2^{j}\leq l^{-\frac{1}{\varrho}}$, we have
\begin{eqnarray*}
\int_{\mathbb{R}^{n}}|T^{*}_{j}a_{Q}(x)|dx
&\lesssim&2^{j\delta}l^{\lambda}+2^{j\frac{n}{2}(\frac{n}{2N}-1)}l^{\frac{n\lambda}{2}(\frac{n}{2N}-1)}.
\end{eqnarray*}
\end{lem}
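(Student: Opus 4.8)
The plan is to treat this as a zoomed-in refinement of Lemma~\ref{La1}, obtained in the same way Lemma~\ref{L7} is obtained from Lemma~\ref{L9} (and as in Lemma~\ref{La5}); recall $p=1$ here, so the atom $a_{Q}$ satisfies $\|a_{Q}\|_{L^{1}}\lesssim 1$ and $\|a_{Q}\|_{L^{2}}\lesssim l^{-n/2}$. We may assume $Q=Q(0,l)$ is centred at the origin. For $\lambda=1$ one simply freezes the symbol at the origin; for $1<\lambda\le 1/\varrho$ (so $l^{\lambda}<l$) cover $Q(0,l)$ by $L^{n}\lesssim l^{n(1-\lambda)}$ essentially disjoint cubes $Q(x_{i},l^{\lambda})$ with $Q(0,l)\subset\bigcup_{i}Q(x_{i},l^{\lambda})\subset Q(0,2l)$, put $a_{Q}=\sum_{i}a_{Q}^{(i)}$ with $a_{Q}^{(i)}=a_{Q}\chi_{Q(x_{i},l^{\lambda})}$, and freeze the spatial variable of the symbol on each piece: $T^{*}_{j}a_{Q}^{(i)}=T^{*}_{j,i}a_{Q}^{(i)}+T^{*}_{b_{i}}a_{Q}^{(i)}$, where $T^{*}_{j,i}$ has the $x$-independent multiplier $a(x_{i},\xi)\psi(2^{-j}\xi)$, of order $-\frac{n}{2}(1-\varrho)$ and supported in $|\xi|\sim 2^{j}$, and $b_{i}(y,\xi)=\big(a(y,\xi)-a(x_{i},\xi)\big)\psi(2^{-j}\xi)$.

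For the difference part, on $\supp a_{Q}^{(i)}$ one has $|y-x_{i}|\lesssim l^{\lambda}$, and because $0<\varrho<\delta$ the amplitude $b_{i}$ obeys, uniformly in $i$ and $j$, the $\xi$-estimates $|\partial^{\alpha}_{\xi}b_{i}(y,\xi)|\lesssim l^{\lambda}\langle\xi\rangle^{-\frac{n}{2}(1-\varrho)+\delta-\varrho|\alpha|}$ on $|\xi|\sim 2^{j}$; these are the only symbol estimates entering the kernel bounds. I would then run the argument behind estimate~\eqref{E0120} in Lemma~\ref{L8} (equivalently that of Lemma~\ref{La1}, with the order adjusted by $+\delta$ and the seminorms scaled by $l^{\lambda}$): split $\int_{\mathbb{R}^{n}}|T^{*}_{b_{i}}a_{Q}^{(i)}(x)|\,dx$ into $|x-x_{i}|\le 2^{-j\varrho+1}$ and its complement; on the first region Cauchy--Schwarz in $x$ (the ball has measure $\sim 2^{-jn\varrho}$), Minkowski's inequality and Parseval give $\lesssim l^{\lambda}2^{j\delta}\|a_{Q}^{(i)}\|_{L^{1}}$, and on the complement integration by parts in $\xi$ (which produces the weight $|x-x_{i}|^{-M}\sim|x-\bar y|^{-M}$, compensated by the $2^{-j\varrho M}$ from the $\xi$-derivatives) gives the same bound for $M$ large. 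Summing over $i$ and using $\sum_{i}\|a_{Q}^{(i)}\|_{L^{1}}=\|a_{Q}\|_{L^{1}}\lesssim 1$ yields the first term $2^{j\delta}l^{\lambda}$.

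For the frozen part I would pick, exactly as in Lemma~\ref{L9} but with $l$ replaced by $l^{\lambda}$, the truncation radius $T=l^{n\lambda/2N}2^{j(n/2N-\varrho)}$; since $2^{j}\ge l^{-\lambda}$ forces $T>l^{\lambda}$, the whole of $\supp a_{Q}^{(i)}$ lies in $Q(x_{i},4T)$, so only a near-source contribution remains. Splitting the $x$-integral at $|x-x_{i}|=4T$: on $|x-x_{i}|\le 4T$ Cauchy--Schwarz and Plancherel give $\lesssim T^{n/2}2^{-\frac{jn}{2}(1-\varrho)}\|a_{Q}^{(i)}\|_{L^{2}}=l^{n^{2}\lambda/4N}2^{\frac{jn}{2}(\frac{n}{2N}-1)}\|a_{Q}^{(i)}\|_{L^{2}}$, while on $|x-x_{i}|>4T$ integration by parts in $\xi$ gives, for $M$ large, a strictly smaller quantity (the analogue of $M_{2}$ in Lemma~\ref{L9}). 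Summing over the $L^{n}\lesssim l^{n(1-\lambda)}$ cubes and using $\sum_{i}\|a_{Q}^{(i)}\|_{L^{2}}\le(L^{n})^{1/2}\|a_{Q}\|_{L^{2}}\lesssim l^{-n\lambda/2}$ gives the second term $2^{\frac{jn}{2}(\frac{n}{2N}-1)}l^{\frac{n\lambda}{2}(\frac{n}{2N}-1)}$. Adding the two contributions, and observing that $\lambda=1$ is the same computation with a single operator frozen at $x_{0}$, finishes the proof.

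The main obstacle is the symbol bookkeeping of the difference part: one must check that freezing the spatial slot of a symbol in $S^{-\frac{n}{2}(1-\varrho)}_{\varrho,\delta}$ costs only a factor $l^{\lambda}$ in size and raises the order by exactly $\delta$ in the $\xi$-variable (the only variable the kernel estimates see), and --- more to the point --- that the resulting gain $l^{\lambda}2^{j\delta}$ is genuinely useful. This is precisely where $0<\varrho<\delta$ and the window $l^{-\lambda}\le 2^{j}\le l^{-1/\varrho}$ matter: together they make $l^{\lambda}2^{j\delta}\le l^{\lambda-\delta/\varrho}$ bounded and, once the lemma is fed the telescoping choice $\lambda=1/\delta^{k}$ as in the proof of Theorem~\ref{TH1}, they make the relevant sums geometric and convergent. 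A secondary point is to run the degenerate endpoint $\varrho=0$ of this window in parallel (as Lemma~\ref{La0} does for Lemma~\ref{L7}) and to verify that at each step the far-in-$x$ pieces are dominated by the near ones.
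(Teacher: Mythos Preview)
Your approach is essentially the same as the paper's: cover $Q(0,l)$ by $L^{n}\lesssim l^{n(1-\lambda)}$ subcubes $Q(x_{i},l^{\lambda})$, split $a_{Q}$ accordingly, freeze the spatial variable of the symbol at each $x_{i}$ to define $T^{*}_{j,i}$, and estimate the difference and frozen contributions separately using the kernel methods of Lemmas~\ref{L8} and~\ref{L9} (with $p=2$). The paper states uniform per-piece bounds and sums; you instead keep the dependence on $\|a_{Q}^{(i)}\|_{L^{1}}$ for the difference part and on $\|a_{Q}^{(i)}\|_{L^{2}}$ for the frozen part, then sum via $\sum_{i}\|a_{Q}^{(i)}\|_{L^{1}}=\|a_{Q}\|_{L^{1}}\lesssim 1$ and $\sum_{i}\|a_{Q}^{(i)}\|_{L^{2}}\le (L^{n})^{1/2}\|a_{Q}\|_{L^{2}}\lesssim l^{-n\lambda/2}$ --- which is in fact the cleaner way to make the arithmetic close for a $(1,2,t)$ atom. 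One small wording issue: the inequality $T>l^{\lambda}$ does not eliminate a ``far-source'' term (the split is in $x$, not $y$); its role is only to guarantee $|x-y|\sim|x-x_{i}|$ on $|x-x_{i}|>4T$, and you use it correctly in the next line. Your far-in-$x$ piece for the frozen part is controlled by $\|a_{Q}^{(i)}\|_{L^{1}}$ rather than $\|a_{Q}^{(i)}\|_{L^{2}}$, but with $M=N$ it sums to exactly the same expression $(l^{\lambda}2^{j})^{\frac{n}{2}(\frac{n}{2N}-1)}$, so the claim that it is dominated by the near piece is justified.
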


\begin{lem}
Let $Q(x_{0},l)$ be a fixed cube with side length $l<1$. Suppose $\varrho= 0$, $0<\delta<1$, $a\in S^{-\frac{n}{2}}_{0,\delta}$, then for any $1\leq\lambda\leq\frac{1}{1-\delta}$, any positive integer $N>\frac{n}{2}$ and any positive integer $j$ with $l^{-\lambda}\leq2^{j}\leq l^{-\frac{1}{1-\delta}}$,
\begin{eqnarray*}
\int_{\mathbb{R}^{n}}|T^{*}_{j}a_{Q}(x)|dx
&\lesssim&2^{j\delta}l^{\lambda}+2^{j\frac{n}{2}(\frac{n}{2N}-1)}l^{\frac{n\lambda}{2}(\frac{n}{2N}-1)}.
\end{eqnarray*}
\end{lem}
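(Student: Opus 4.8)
The statement is the $\varrho=0$ counterpart of Lemma~\ref{L6}, and the plan is to run the covering-and-freezing scheme of Lemma~\ref{L7} (in the form it takes for $\varrho=0$, namely Lemma~\ref{La0}) for the adjoint operators $T^*_j$ acting on the atom $a_Q$, combined with the kernel and Parseval estimates for $T^*_j a_Q$ already developed in Lemmas~\ref{La1} and~\ref{R3}. As usual one may assume $Q=Q(x_0,l)$ and that $a(x,\xi)$ vanishes for $|\xi|\le1$, so that $a(y,\xi)\psi(2^{-j}\xi)$ is supported in $|\xi|\sim2^j$, lies in $S^{-n/2}_{0,\delta}$ with bounds $\lesssim2^{-jn/2}$, and freezing its first variable produces a symbol in $S^{-n/2}_{0,0}$, for which $T^*$ is controlled by Parseval's identity.

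First I would dispose of $\lambda=1$ directly, by freezing $a(y,\xi)$ at the center $x_0$ of $Q$; for $1<\lambda\le\frac1{1-\delta}$ we have $l^\lambda<l$ since $l<1$, so we may pick an integer $L$ with $L-1<l^{1-\lambda}\le L$ and cover $Q(x_0,l)\subset\bigcup_{i=1}^{L^n}Q(y_i,l^\lambda)\subset Q(x_0,2l)$ with $L^n\le2^nl^{n(1-\lambda)}$. Writing $a_Q=\sum_i a_Q\chi_{Q(y_i,l^\lambda)}$ and, on the $i$-th piece, $a(y,\xi)=a(y_i,\xi)+\big(a(y,\xi)-a(y_i,\xi)\big)$, this splits $T^*_j=\sum_i\big(T^*_{j,i}+R_{j,i}\big)$ on the corresponding pieces, where $T^*_{j,i}$ has the frozen symbol $a(y_i,\xi)\psi(2^{-j}\xi)$ and $R_{j,i}$ carries $\big(a(y,\xi)-a(y_i,\xi)\big)\psi(2^{-j}\xi)$, which by the mean value theorem in $y$ is $O\big(l^\lambda2^{j\delta}\langle\xi\rangle^{-n/2}\big)$ on $Q(y_i,l^\lambda)\times\{|\xi|\sim2^j\}$. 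For the remainder pieces $R_{j,i}$, the argument behind \eqref{a} and \eqref{E12} applies after extracting the factor $l^\lambda2^{j\delta}$, giving $\int_{\mathbb R^n}|R_{j,i}(a_Q\chi_{Q(y_i,l^\lambda)})(x)|\,dx\lesssim l^\lambda2^{j\delta}\|a_Q\chi_{Q(y_i,l^\lambda)}\|_{L^1}$; summing over $i$ and using $\sum_i\|a_Q\chi_{Q(y_i,l^\lambda)}\|_{L^1}=\|a_Q\|_{L^1}\lesssim1$ yields the first term $2^{j\delta}l^\lambda$. For the frozen pieces I would re-run the proof of \eqref{E16}--\eqref{E23}: split $\int_{\mathbb R^n}=\int_{|x|\le2T}+\int_{|x|>2T}$ with $T=l^{n\lambda/2N}2^{jn/2N}$ (note $2^j\ge l^{-\lambda}$ forces $T\ge1$), estimate the near part by H\"older's inequality and Parseval's identity (the symbol being frozen, $\|T^*_{j,i}g\|_{L^2}\lesssim2^{-jn/2}\|g\|_{L^2}$) and the far part by integration by parts in $\xi$; summing the $L^n\lesssim l^{n(1-\lambda)}$ contributions (with a Cauchy--Schwarz in $i$, which is exactly absorbed) produces the second term $2^{j\frac n2(\frac n{2N}-1)}l^{\frac{n\lambda}2(\frac n{2N}-1)}$. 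The case $l\ge1$ and the frequency tails are handled as in the companion lemmas of Sections~2--3.

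The main obstacle is precisely the degeneracy $\varrho=0$: the kernels $K^*_j(\cdot,y)$ no longer concentrate at scale $2^{-j\varrho}$, so the pointwise kernel bounds that drive Lemma~\ref{L7} are unavailable and every local estimate must be routed through the $L^2$ theory (Parseval), with the frequency localization $|\xi|\sim2^j$ supplying the only gain. The delicate point is then the exponent bookkeeping among the splitting scale $T$, the number of cubes $L^n$, the symbol bounds, and the integration-by-parts order $N>\frac n2$: it closes because $\frac n{2N}-1<0$ forces the second term to decay in $j$, while the restriction $1\le\lambda\le\frac1{1-\delta}$ keeps $2^{j\delta}l^\lambda$ summable over the dyadic block of frequencies on which the lemma will be applied in the proof of Theorem~\ref{TH1}.
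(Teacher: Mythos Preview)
Your proposal is correct and follows essentially the same scheme as the paper, which (as for Lemma~\ref{L6}) simply invokes the covering-and-freezing argument of Lemma~\ref{L7}/\ref{La0} together with the $p=2$ versions of the kernel estimates in Lemmas~\ref{L8}--\ref{L9}. The only cosmetic differences are that you sum the frozen near-parts via a Cauchy--Schwarz in $i$ rather than stating a per-cube bound, and that your reference to \eqref{E16}--\eqref{E23} should really point to the splitting in Lemma~\ref{L9} (your $T=l^{n\lambda/2N}2^{jn/2N}$ is exactly the $\varrho=0$, side-length~$l^\lambda$ specialization of the $T$ there, not the $2^{-j\varrho}$ cut of Lemma~\ref{La1}).
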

Theses lemmas can be proved by the main idea in the proof Lemma \ref{L7}. We will only outline the proof of Lemma \ref{L6}.
\begin{proof}[Proof of Lemma \ref{L6}]
Let $Q(x_{i},l^{\lambda})$ be given in the proof of Lemma \ref{L7}.
$$Q(x_{0},l)\subset\cup_{i=1}^{L^{n}}Q(x_{i},l^{\lambda})\subset Q(x_{0},2l).$$
Denote
\begin{eqnarray*}
T^{*}_{j,i}a_{Q}(x)
&=&\int_{\mathbb{R}^n}\int_{\mathbb{R}^n} e^{ i \langle x-y,\xi\rangle}a(x_{i},\xi)\psi(2^{-j}\xi)d\xi a_{Q}(y) dy.
\end{eqnarray*}
We write
\begin{eqnarray*}
&&\int_{\mathbb{R}^{n}}|T^{*}_{j}a_{Q}(x)|dx\leq \sum\limits_{i=1}^{L^{n}}\int_{\mathbb{R}^{n}}|T^{*}_{j}(a_{Q}\chi_{Q(x_{i},l^{\lambda})})(x)|dx\\
&\leq&\sum\limits_{i=1}^{L^{n}}\bigg(\int_{\mathbb{R}^{n}}|T^{*}_{j}(a_{Q}\chi_{Q(x_{i},l^{\lambda})})(x)-T^{*}_{j,i}(a_{Q}\chi_{Q(x_{i},l^{\lambda})})(x)|dx
+\int_{\mathbb{R}^{n}}|T^{*}_{j,i}(a_{Q}\chi_{Q(x_{i},l^{\lambda})})(x)|dx\bigg).
\end{eqnarray*}
Using the similar method as Lemma \ref{L8}($p=2$) and Lemma \ref{L9}($p=2$), one can get
$$\int_{\mathbb{R}^{n}}|T^{*}_{j}(a_{Q}\chi_{Q(x_{i},l^{\lambda})})(x)-T^{*}_{j,i}(a_{Q}\chi_{Q(x_{i},l^{\lambda})})(x)|dx\lesssim2^{j\delta}l^{n(\lambda-1)+\lambda}$$
and
$$\int_{\mathbb{R}^{n}}|T^{*}_{j,i}(a_{Q}\chi_{Q(x_{i},l^{\lambda})})(x)|dx\lesssim 2^{j\frac{n}{2}(\frac{n}{2N}-1)}l^{\frac{n\lambda}{2}(\frac{n}{2N}-1)+n(1-\lambda)},$$
which gives the desired estimate immediately.
\end{proof}

\bibliographystyle{Plain}

\end{sloppypar}
\end{document}